\newtheorem{theorem}{Theorem}[section]
\newtheorem{corollary}[theorem]{Corollary}
\newtheorem{definition}[theorem]{Definition}
\newtheorem{example}[theorem]{Example}
\newtheorem{lemma}[theorem]{Lemma}
\newtheorem{proposition}[theorem]{Proposition}
\theoremstyle{remark}
\newtheorem{remark}[theorem]{Remark}
\numberwithin{equation}{section}
\begin{document}
\title[Bi-paracontact structures and Legendre foliations]{Bi-paracontact structures and Legendre foliations}

\author[B. Cappelletti Montano]{Beniamino Cappelletti Montano}

\address{Dipartimento di Matematica,
Universit\`{a} degli Studi di Bari, Via E. Orabona 4, 70125 Bari,
Italy}

\email{b.cappellettimontano@gmail.com}

\subjclass[2000]{}

\keywords{Almost bi-paracontact; contact metric manifold;
$(\kappa,\mu)$-nullity condition; Sasakian; para-contact;
para-Sasakian; bi-Legendrian; foliation; complex-product;
anti-hypercomplex; Obata connection; $3$-web}

\begin{abstract}
We study almost bi-paracontact structures on contact manifolds. We
prove that if an almost bi-paracontact structure is defined on a
contact manifold $(M,\eta)$, then under some natural assumptions of
integrability, $M$ carries two transverse bi-Legendrian structures.
Conversely, if two transverse bi-Legendrian structures are defined
on a contact manifold, then $M$ admits an almost bi-paracontact
structure. We define a canonical connection on an almost
bi-paracontact manifold and we study its curvature properties, which
resemble those of the Obata connection of an anti-hypercomplex (or
complex-product) manifold. Further, we prove that any contact metric
manifold whose Reeb vector field belongs to the
$(\kappa,\mu)$-nullity distribution canonically carries an almost
bi-paracontact structure and we apply the previous results to the
theory of contact metric $(\kappa,\mu)$-spaces.
\end{abstract}

\maketitle

\section{Introduction}

The study of Legendre foliations on contact manifolds is very recent
in literature, being initiated in the early 90's by the work of
Libermann, Pang et alt. (cf. \cite{libermann}, \cite{pang}). Lately,
the notion of ``bi-Legendrian'' structure has made its appearance,
especially with regard to its applications to Cartan geometry
(\cite{morimoto1}) and Monge-Amp\`{e}re equations (\cite{morimoto2})
and to other geometric structures associated with a contact
manifold, such as paracontact metrics. In particular, in
\cite{Mino-08} the author studied the interplays between
bi-Legendrian manifolds and paracontact geometry, whereas in
\cite{Mino-toap1} the theory of bi-Legendrian structures was applied
for the study of a remarkable class of contact Riemannian manifolds,
namely contact metric $(\kappa,\mu)$-spaces. We recall that a
contact metric $(\kappa,\mu)$-space is a contact Riemannian manifold
$(M,\phi,\xi,\eta,g)$ such that the Reeb vector field $\xi$ belongs
to the $(\kappa,\mu)$-nullity distribution, i.e. the following
condition holds
\begin{equation*}
R^{g}(X,Y)\xi=\kappa\left(\eta\left(Y\right)X-\eta\left(X\right)Y\right)+\mu\left(\eta\left(Y\right)hX-\eta\left(X\right)hY\right),
\end{equation*}
for some real numbers $\kappa$, $\mu$ and for  any
$X,Y\in\Gamma(TM)$, where $R^{g}$ denotes the curvature tensor field
of the Levi Civita connection and $2h$ is the Lie derivative of the
structure tensor $\phi$ in the direction of the Reeb vector field.
This definition, which has no analogue in even dimension, was
introduced by Blair, Kouforgiorgos and Papantoniou in \cite{BKP-95},
as a generalization both of the well-known Sasakian condition
$R^{g}(X,Y)\xi=\eta\left(Y\right)X-\eta\left(X\right)Y$ and of those
contact metric manifolds verifying $R^{g}(X,Y)\xi=0$ which were
studied by Blair in \cite{blair-1}. A notable class of examples of
contact metric $(\kappa,\mu)$-spaces is given by the tangent sphere
bundle of Riemannian manifolds of constant curvature.

One of the main results in \cite{BKP-95} was that any non-Sasakian
contact metric $(\kappa,\mu)$-space is foliated by two mutually
orthogonal Legendre foliations ${\mathcal D}_{h}(\lambda)$ and
${\mathcal D}_{h}(-\lambda)$, given by the eigendistributions of the
symmetric operator $h$ corresponding to the eigenvalues $\lambda$
and $-\lambda$, respectively, where $\lambda:=\sqrt{1-\kappa}$. Thus
any contact metric $(\kappa,\mu)$-space is canonically a
bi-Legendrian manifold.

In this paper we show that this is only a part of the story. In fact
we prove that also the operator $\phi h$ is diagonalizable and
admits the same eigenvalues as $h$. Overall, the corresponding
eigendistributions ${\mathcal D}_{\phi h}(\lambda)$ and ${\mathcal
D}_{\phi h}(-\lambda)$ are integrable and define two mutually
orthogonal Legendre foliations, as well. Thus any contact metric
$(\kappa,\mu)$-space carries two bi-Legendrian structures and,
moreover, any foliation of each bi-Legendrian structure is
transversal to the foliations of the other one. This geometrical
structure resembles the concept, in even dimension, of
\emph{$3$-web} (\cite{nagy}) together with its closely linked
tensorial notion, \emph{anti-hypercomplex or complex-product
structure} (\cite{andrada}, \cite{marchiafava}). In fact, let
$\phi_{1}$, $\phi_{2}$, $\phi_{3}$ denote the (1,1)-tensor fields
defined by
\begin{equation}\label{kappamu}
\phi_{1}:=\frac{1}{\sqrt{1-\kappa}}\phi h, \ \ \
\phi_{2}:=\frac{1}{\sqrt{1-\kappa}}h, \ \ \ \phi_{3}:=\phi.
\end{equation}
Then one can check that $\phi_{1}$ and $\phi_{2}$ are anti-commuting
almost paracontact structures on $M$ such that
$\phi_{1}\phi_{2}=\phi_{3}$.

Thus we are motivated in the study of this new geometric structure,
which we call \emph{almost bi-paracontact structure}. An almost
bi-paracontact structure on a contact manifold $(M,\eta)$ is by
definition any triplet $(\phi_{1},\phi_{2},\phi_{3})$, where
$\phi_{1}$ and $\phi_{2}$ are anti-commuting tensor fields
satisfying $\phi_{1}^{2}=\phi_{2}^{3}=I-\eta\otimes\xi$ and
$\phi_{3}=\phi_{1}\phi_{2}$ is an almost contact structure on
$(M,\eta)$. Then one can prove that $\phi_{1}$ and $\phi_{2}$ are in
fact almost paracontact structures and the eigendistributions
corresponding to $\pm 1$ define, under some natural assumptions,
four mutually transversal Legendre foliations.

When the structure is \emph{normal}, that is when the Nijenhuis
tensors of $\phi_{1}$, $\phi_{2}$, $\phi_{3}$ vanish, the leaves of
such foliations admit an affine structure. This is due to the
existence of a unique linear connection $\nabla^{c}$ which preserves
$\phi_{1}$, $\phi_{2}$, $\phi_{3}$. $\nabla^{c}$ is called the
\emph{canonical connection} of the almost bi-paracontact manifold
$(M,\phi_{1},\phi_{2},\phi_{3})$ and it can be considered, in some
sense, as the odd-dimensional counterpart of the Chern connection of
an almost anti-hypercomplex manifold (\cite{marchiafava}), as well
as of the connection studied by Andrada for a complex-product
manifold (\cite{andrada}), and of the Obata connection of a manifold
endowed with an almost quaternion structure of the second kind
(\cite{yano2}). In fact we prove that in any normal almost
bi-paracontact manifold the $1$-dimensional foliation ${\mathcal
F}_{\xi}$ defined by the Reeb vector field is transversely
anti-hypercomplex or complex-product, i.e.  the almost
bi-paracontact structure $(\phi_{1},\phi_{2},\phi_{3})$ is
projectable to a local anti-hypercomplex structure on the leaf
space.

We further investigate the curvature properties of this connection,
proving that, under the assumption of normality, its curvature
tensor field $R^{c}$ is of type $(1,1)$ with respect to $\phi_1$,
$\phi_2$, $\phi_3$, i.e.
$R^{c}(\phi_{1}X,\phi_{1}Y)=R^{c}(\phi_{2}X,\phi_{2}Y)=-R^{c}(\phi_{3}X,\phi_{3}Y)=-R^{c}(X,Y)$
for all $X,Y\in\Gamma(TM)$.

In the second part of the paper we apply our general results on
almost bi-paracontact structures to the theory of contact metric
$(\kappa,\mu)$-spaces. First, we study the bi-Legendrian structure
$({\mathcal D}_{\phi h}(\lambda),{\mathcal D}_{\phi h}(-\lambda))$.
We prove that the Legendre foliations ${\mathcal D}_{\phi
h}(\lambda)$ and ${\mathcal D}_{\phi h}(-\lambda)$ are either
non-degenerate or flat, according to the Pang's classification of
Legendre foliations (cf. \cite{pang}). In particular, ${\mathcal
D}_{\phi h}(\lambda)$ and ${\mathcal D}_{\phi h}(-\lambda)$ are
positive definite if and only if $I_M>0$, negative definite if and
only if $I_M<0$, flat if and only if $I_M=0$, where
\begin{equation*}
I_{M}:=\frac{1-\frac{\mu}{2}}{\sqrt{1-\kappa}}
\end{equation*}
is the invariant introduced by Boeckx for classifying contact metric
$(\kappa,\mu)$-structures. This provides a new geometrical
interpretation of such invariant in terms of Legendre foliations
(another one was given in \cite{Mino-toap1}).

Then we consider the almost bi-paracontact structure
$(\phi_{1},\phi_{2},\phi_{3})$ defined by \eqref{kappamu} and prove
that the semi-Riemannian metrics $g_1$ and $g_2$, given by
\begin{equation*}
g_{1}:=d\eta(\cdot,\phi_{1}\cdot)+\eta\otimes\eta, \ \ \
g_{2}:=d\eta(\cdot,\phi_{2}\cdot)+\eta\otimes\eta,
\end{equation*}
define two associated paracontact metrics satisfying
\begin{equation*}
R^{g_\alpha}(X,Y)\xi=\kappa_{\alpha}(\eta(Y)X-\eta(X)Y)+\mu_{\alpha}(\eta(Y)h_{\alpha}X-\eta(X)h_{\alpha}Y)
\end{equation*}
where
\begin{gather*}
\kappa_{1}=\left(1-\frac{\mu}{2}\right)^2-1, \ \ \
\mu_{1}=2(1-\sqrt{1-\kappa}),\\
\kappa_{2}=\kappa-2+\left(1-\frac{\mu}{2}\right)^2, \ \ \ \mu_{2}=2.
\end{gather*}
Mreover, $I_{M}=0$ if and only if $(\phi_{1},\xi,\eta,g_{1})$ is
para-Sasakian. \ Furthermore, we prove that any contact metric
$(\kappa,\mu)$-space such that $I_{M}\neq\pm 1$ admits a
supplementary non-normal almost bi-paracontact structure, although
one of the two paracontact structures is normal (cf. Theorem
\ref{main4}). In this way we obtain a class of examples of strictly
non-normal, integrable almost bi-paracontact structures.

Finally, we deal with the following question, which generalizes the
well-known problem of finding conditions ensuring the existence of
Sasakian structures compatible with a given contact form: let
$(M,\eta)$ be a contact manifold; then does $(M,\eta)$ admit a
compatible contact metric $(\kappa,\mu)$-structure? As a matter of
fact, the answer to this question involves the standard almost
bi-paracontact structure \eqref{kappamu} of  contact metric
$(\kappa,\mu)$-spaces. In particular, using the properties of the
canonical connection $\nabla^{c}$, we find necessary conditions for
a contact manifold $(M,\eta)$ endowed with an almost bi-paracontact
structure to admit a compatible contact metric
$(\kappa,\mu)$-structure (cf. Theorem \ref{main3}).

\section{Preliminaries}

\subsection{Almost contact and paracontact
structures}\label{preliminari1}

A \emph{contact manifold} is a $(2n+1)$-dimensio-nal smooth manifold
$M$ which carries a $1$-form $\eta$, called \emph{contact form},
satisfying the condition $\eta\wedge\left(d\eta\right)^n\neq 0$
everywhere on $M$. It is well known that given $\eta$ there exists a
unique vector field $\xi$, called \emph{Reeb vector field}, such
that
\begin{equation}\label{contatto4}
i_{\xi}\eta=1, \ \ \ i_{\xi}d\eta=0.
\end{equation}
From \eqref{contatto4} it follows that ${\mathcal L}_{\xi}d\eta=0$,
i.e. the $1$-dimensional foliation ${\mathcal F}_{\xi}$ defined by
the Reeb vector field is transversely symplectic. \ In the sequel we
will denote by $\mathcal D$ the $2n$-dimensional distribution
defined by $\ker\left(\eta\right)$, called the \emph{contact
distribution}. It is easy to see that the Reeb vector field is an
infinitesimal automorphism with respect to the contact distribution
and  the tangent bundle of $M$ splits as the direct sum $TM=\mathcal
D\oplus\mathbb{R}\xi$.

Given a contact manifold $(M,\eta)$ one can consider two different
geometric structures associated with the contact form $\eta$, namely
a ``contact metric structure'' and a ``paracontact metric
structure''.

An \emph{almost contact structure} on a $(2n+1)$-dimensional smooth
manifold $M$ is nothing but a triplet $(\phi,\xi,\eta)$, where
$\phi$ is a tensor field of type $(1,1)$, $\eta$ a $1$-form and
$\xi$ a vector field on $M$ satisfying the following conditions
\begin{equation}\label{almostcontact}
\phi^{2}=-I+\eta\otimes\xi, \ \ \eta(\xi)=1,
\end{equation}
where $I$ is the identity mapping. From \eqref{almostcontact} it
follows that $\phi\xi=0$, $\eta\circ\phi=0$ and the $(1,1)$-tensor
field $\phi$ has constant rank $2n$ (\cite{Blair-02}). Given an
almost contact manifold $(M,\phi,\xi,\eta)$ one can define an almost
complex structure $J$ on the product $M\times\mathbb{R}$ by setting
$J\left(X,f\frac{d}{dt}\right)=\left(\phi
X-f\xi,\eta\left(X\right)\frac{d}{dt}\right)$ for any
$X\in\Gamma\left(TM\right)$ and $f\in
C^{\infty}\left(M\times\mathbb{R}\right)$. Then the almost contact
manifold is said to be \emph{normal} if the almost complex structure
$J$ is integrable. The computation of the Nijenhuis tensor of $J$
gives rise to the four tensors defined by
\begin{align}
&N^{(1)}_{\phi}\left(X,Y\right)=\left[\phi,\phi\right]\left(X,Y\right)+2d\eta\left(X,Y\right)\xi,\label{n1}\\
&N^{(2)}_{\phi}\left(X,Y\right)=\left({\mathcal L}_{\phi
X}\eta\right)\left(Y\right)-\left({\mathcal L}_{\phi
Y}\eta\right)\left(X\right),\label{n2}\\
&N^{(3)}_{\phi}\left(X\right)=\left({\mathcal L}_{\xi}\phi\right)X,\label{n3}\\
&N^{(4)}\left(X\right)=\left({\mathcal
L}_{\xi}\eta\right)\left(X\right),\label{n4}
\end{align}
where $\left[\phi,\phi\right]$ is the Nijenhuis tensor of $\phi$,
defined by
\begin{equation*}
[\phi,\phi](X,Y):=\phi^2[X,Y]+[\phi X,\phi Y]-\phi[\phi
X,Y]-\phi[X,\phi Y],
\end{equation*}
and ${\mathcal L}_X$ denotes the Lie derivative with respect to the
vector field $X$. One finds that the structure
$\left(\phi,\xi,\eta\right)$ is normal if and only if $N^{(1)}$
vanishes identically; in particular, if $N^{(1)}_{\phi}=0$ then also
the other tensors $N^{(2)}_{\phi}$, $N^{(3)}_{\phi}$ and
$N^{(4)}_{\phi}$ vanish (cf. \cite{sasaki2}). \ By a long but
straightforward computation one can prove the following lemma which
will turn out very useful in the sequel.

\begin{lemma}
In any almost contact manifold $(M,\phi,\xi,\eta)$  for any
$X,Y\in\Gamma(TM)$,
\begin{equation}\label{lemma2}
\phi{N^{(1)}_{\phi}(X,Y)}+N^{(1)}_{\phi}(\phi X,Y) =
N^{(2)}_{\phi}(X,Y)\xi + \eta(X)N_{\phi}^{(3)}(Y).
\end{equation}
\end{lemma}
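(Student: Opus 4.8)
The plan is to verify the identity \eqref{lemma2} by a direct computation, expanding both sides in terms of the definitions \eqref{n1}--\eqref{n3} and the defining relations \eqref{almostcontact} of an almost contact structure. Since the claim is tensorial (all terms on both sides are tensorial in $X$ and $Y$, as Lie-derivative combinations and Nijenhuis tensors are), it suffices to check it on vector fields $X,Y$ that are convenient at a point; in particular one may use that $\phi$ has constant rank and split $TM = \mathcal{D} \oplus \mathbb{R}\xi$, treating separately the cases where each argument lies in $\ker\eta$ or equals $\xi$.

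\medskip

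First I would record the elementary consequences of \eqref{almostcontact}, namely $\phi\xi=0$, $\eta\circ\phi = 0$, and hence $d\eta(X,Y) = \frac12\bigl(\eta([X,Y]) - X\eta(Y) + Y\eta(X)\bigr)$ together with $\left({\mathcal L}_{X}\eta\right)(Y) = X\eta(Y) - \eta([X,Y])$. Next I would compute $\phi N^{(1)}_{\phi}(X,Y)$: applying $\phi$ to \eqref{n1} and using $\phi^2 = -I + \eta\otimes\xi$ to simplify $\phi^3[X,Y] = -\phi[X,Y]$ and $\phi\bigl(2d\eta(X,Y)\xi\bigr) = 0$, one gets
\begin{equation*}
\phi N^{(1)}_{\phi}(X,Y) = \phi[\phi X,\phi Y] - \phi[X,Y] - \phi^2[\phi X, Y] - \phi^2[X,\phi Y].
\end{equation*}
Then I would compute $N^{(1)}_{\phi}(\phi X, Y)$ directly from \eqref{n1}, noting that $\phi^2[\phi X, Y] = -[\phi X, Y] + \eta([\phi X, Y])\xi$ and that $2d\eta(\phi X, Y)\xi$ contributes a $\xi$-term. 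Adding the two and carefully cancelling the $\phi[\phi X,\phi Y]$, $\phi[X,Y]$, and mixed $\phi^2$-terms, the surviving part should be a combination of $\xi$-valued terms (coming from the $\eta([\phi X, Y])\xi$ and $d\eta$ pieces) plus a term involving $({\mathcal L}_\xi\phi)$ that appears precisely when $\eta(X)\neq 0$; regrouping these should reproduce the right-hand side $N^{(2)}_{\phi}(X,Y)\xi + \eta(X)N^{(3)}_{\phi}(Y)$, using the definitions \eqref{n2} and \eqref{n3}.

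\medskip

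The main obstacle is bookkeeping: there are many terms of the form $\phi[\cdot,\cdot]$, $[\phi\cdot,\phi\cdot]$ and $\eta$-contractions, and the cancellations are delicate because the $\xi$-component of various brackets (via $\phi^2 = -I + \eta\otimes\xi$) is exactly what survives. To keep this manageable I would exploit tensoriality to reduce to the two cases $\eta(X) = 0$ and $X = \xi$ (and similarly in $Y$, though by the structure of the identity the $Y$-reduction is less essential). When $\eta(X)=0$ the right-hand side loses its last term and the identity becomes purely about the $\xi$-component of $\phi N^{(1)}_\phi(X,Y) + N^{(1)}_\phi(\phi X, Y)$, which should be checkable by a short $d\eta$-computation; when $X=\xi$ one has $\phi X = 0$, so $N^{(1)}_\phi(\phi X, Y) = 0$ and the identity reduces to $\phi N^{(1)}_\phi(\xi, Y) = N^{(2)}_\phi(\xi,Y)\xi + N^{(3)}_\phi(Y)$, which unwinds to a statement about ${\mathcal L}_\xi\phi$ and ${\mathcal L}_\xi\eta$ — essentially forced by \eqref{n1}--\eqref{n4} and $\phi\xi=0$. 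Assembling these pieces gives the lemma; the paper's parenthetical ``by a long but straightforward computation'' signals that no conceptual input beyond \eqref{almostcontact} and the definitions is needed.
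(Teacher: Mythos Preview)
Your approach is correct and matches the paper's: the paper gives no explicit proof, remarking only that the identity follows ``by a long but straightforward computation'', and your direct expansion using \eqref{almostcontact} and the definitions \eqref{n1}--\eqref{n3}, together with the tensoriality reduction to the cases $X\in\Gamma(\mathcal{D})$ and $X=\xi$, is precisely such a computation. One small slip: your formula for $d\eta$ has the wrong sign (under the paper's convention $2d\eta(X,Y)=X(\eta(Y))-Y(\eta(X))-\eta([X,Y])$), but this does not affect the validity of the outline.
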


Any almost contact manifold $\left(M,\phi,\xi,\eta\right)$ admits a
\emph{compatible metric}, i.e. a Riemannian metric $g$ satisfying
\begin{equation}\label{metric1}
g\left(\phi X,\phi
Y\right)=g\left(X,Y\right)-\eta\left(X\right)\eta\left(Y\right)
\end{equation}
for all $X,Y\in\Gamma\left(TM\right)$. The manifold $M$ is  said to
be an \emph{almost contact metric manifold} with structure
$\left(\phi,\xi,\eta,g\right)$. From \eqref{metric1} it follows
immediately that $\eta=g(\cdot,\xi)$ and
$g(\cdot,\phi\cdot)=-g(\phi\cdot,\cdot)$. Then one defines the
$2$-form $\Phi$ on $M$ by $\Phi\left(X,Y\right)=g\left(X,\phi
Y\right)$, called the \emph{fundamental $2$-form} of the almost
contact metric manifold. If $\Phi=d\eta$ then $\eta$ becomes a
contact form, with $\xi$ its corresponding Reeb vector field, and
$(M,\phi,\xi,\eta,g)$ is \emph{called contact metric manifold}.

In a contact metric manifold one has
\begin{equation}\label{acca2}
\nabla^{g}\xi=-\phi -\phi h
\end{equation}
\begin{equation}\label{contatto2}
N^{(1)}_{\phi}(X,Y)=(\nabla^{g}_{\phi X}\phi)Y-(\nabla^{g}_{\phi
Y}\phi)X+(\nabla^{g}_{X}\phi)\phi Y-(\nabla^{g}_{Y}\phi)\phi
X-\eta(Y)\nabla^{g}_{X}\xi+\eta(X)\nabla^{g}_{Y}\xi
\end{equation}
where $\nabla^{g}$ is the Levi Civita connection of $(M,g)$ and
$h:=\frac{1}{2}N^{(3)}_{\phi}$. The tensor field $h$ is symmetric
with respect to $g$ and vanishes identically if and only if the Reeb
vector field is Killing, and in this case the contact metric
manifold is said to be \emph{K-contact}.
A  normal contact metric manifold is called \emph{Sasakian
manifold}. Any Sasakian manifold is also $K$-contact and the
converse holds only in dimension $3$.
A contact metric manifold is said to be \emph{integrable} if and
only if the following condition is fulfilled
\begin{equation}\label{integrabile1}
({\nabla}^{g}_X\phi)Y=g\left(X+{h}X,Y\right)\xi-\eta\left(Y\right)\left(X+{h}X\right).
\end{equation}
Any Sasakian manifold satisfies such condition. By replacing
\eqref{integrabile1} and \eqref{acca2} in \eqref{contatto2} one can
prove the following

\begin{proposition}
In an integrable contact metric manifold
\begin{equation}\label{contatto3}
N^{(1)}_{\phi}(X,Y)=2\bigl(\eta(Y)\phi h X-\eta(X)\phi h Y\bigr).
\end{equation}
\end{proposition}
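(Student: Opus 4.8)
The plan is to specialize the general identity \eqref{contatto2} to the integrable case by substituting the two structural formulas available to us: the expression \eqref{integrabile1} for $(\nabla^g_X\phi)Y$ and the formula \eqref{acca2} for $\nabla^g\xi$. The point is that every term on the right-hand side of \eqref{contatto2} can be rewritten purely in terms of $\eta$, $\xi$, $\phi$, $h$, $X$ and $Y$, after which the claimed closed form \eqref{contatto3} should emerge from collecting terms.

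Concretely, I would first compute the four covariant-derivative-of-$\phi$ terms. Applying \eqref{integrabile1} with $X$ replaced by $\phi X$ gives $(\nabla^g_{\phi X}\phi)Y = g(\phi X + h\phi X, Y)\xi - \eta(Y)(\phi X + h\phi X)$, and similarly with $\phi Y$; note $\eta(\phi X)=0$ so no $\eta(\phi X)$-type terms appear. For the remaining two terms I would apply \eqref{integrabile1} with $Y$ replaced by $\phi Y$, obtaining $(\nabla^g_X\phi)\phi Y = g(X+hX,\phi Y)\xi - \eta(\phi Y)(X+hX) = g(X+hX,\phi Y)\xi$ since $\eta\circ\phi = 0$; symmetrically for $(\nabla^g_Y\phi)\phi X$. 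So the first four terms of \eqref{contatto2} become
\begin{equation*}
g(\phi X + h\phi X, Y)\xi - \eta(Y)(\phi X + h\phi X) - g(\phi Y + h\phi Y, X)\xi + \eta(X)(\phi Y + h\phi Y) + g(X+hX,\phi Y)\xi - g(Y+hY,\phi X)\xi.
\end{equation*}

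Next I would handle the last two terms $-\eta(Y)\nabla^g_X\xi + \eta(X)\nabla^g_Y\xi$ using \eqref{acca2}: they become $\eta(Y)(\phi X + \phi h X) - \eta(X)(\phi Y + \phi h Y)$. Adding these to the collection above, the pure-$\phi$ pieces $-\eta(Y)\phi X + \eta(Y)\phi X$ and $\eta(X)\phi Y - \eta(X)\phi Y$ cancel, leaving $-\eta(Y)h\phi X + \eta(Y)\phi h X + \eta(X)h\phi Y - \eta(X)\phi h Y$. For the scalar ($\xi$-valued) part I expect the coefficient of $\xi$ to vanish: this requires the identity $g(\phi X,Y) + g(hX,\phi Y) + g(X,\phi Y) - g(Y,\phi X) - g(\phi Y,X) - g(hY,\phi X) = 0$ (after using antisymmetry of $g(\cdot,\phi\cdot)$ and symmetry of $h$), which should collapse using $g(h\phi X,Y) = g(\phi X, hY)$ and skew-symmetry of $\Phi$. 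That leaves exactly $\eta(Y)(\phi h X - h\phi X) - \eta(X)(\phi h Y - h\phi Y)$, and the proof finishes by invoking the well-known anticommutation relation $\phi h = -h\phi$ in a contact metric manifold (which follows from $h\phi + \phi h = 0$, itself a standard consequence of $h\xi = 0$, $\mathrm{tr}\,h = 0$ and $h$ anticommuting with $\phi$), giving $\phi h X - h\phi X = 2\phi h X$ and hence \eqref{contatto3}.

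The main obstacle I anticipate is bookkeeping rather than conceptual: keeping the $\xi$-component and the $\mathcal{D}$-component cleanly separated and verifying that the scalar coefficient of $\xi$ indeed cancels, which hinges on correctly using the skew-symmetry of $g(\cdot,\phi\cdot)$ together with the $g$-symmetry of $h$ and the relation $g(h\phi\cdot,\cdot) = g(\phi\cdot,h\cdot)$. A secondary point to be careful about is that \eqref{integrabile1} is applied with various arguments substituted, and one must consistently track which slot is the derivative direction and which is the argument of $\phi$; a sign error there would propagate. Once those are handled, the identity \eqref{lemma2} or the standard fact $\phi h = - h\phi$ does the rest, and no deep input beyond the hypotheses already recorded in the excerpt is needed.
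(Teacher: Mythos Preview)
Your proposal is correct and follows exactly the route the paper indicates: substitute \eqref{integrabile1} and \eqref{acca2} into \eqref{contatto2} and collect terms, with the $\xi$-coefficient cancelling by skew-symmetry of $g(\cdot,\phi\cdot)$ and $g$-symmetry of $h$, and the $\mathcal{D}$-part reducing via $\phi h=-h\phi$. One cosmetic remark: your parenthetical justification of $\phi h=-h\phi$ is circular (you invoke ``$h$ anticommuting with $\phi$'' to prove it); just cite it as the standard contact-metric identity.
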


\begin{corollary}
Any integrable $K$-contact manifold is Sasakian.
\end{corollary}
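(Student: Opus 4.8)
The plan is to deduce the corollary immediately from the formula \eqref{contatto3} for the tensor $N^{(1)}_{\phi}$ in an integrable contact metric manifold, combined with the characterization of $K$-contact manifolds. First I would recall that, by definition, a contact metric manifold $(M,\phi,\xi,\eta,g)$ is $K$-contact if and only if the Reeb vector field $\xi$ is Killing, and that this happens precisely when the symmetric operator $h=\frac{1}{2}N^{(3)}_{\phi}$ vanishes identically on $M$.

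Next, assuming that in addition $(M,\phi,\xi,\eta,g)$ is integrable, so that the previous Proposition applies, I would substitute $h\equiv 0$ into \eqref{contatto3}. Since the right-hand side $2\bigl(\eta(Y)\phi h X-\eta(X)\phi h Y\bigr)$ then vanishes for all $X,Y\in\Gamma(TM)$, we obtain $N^{(1)}_{\phi}\equiv 0$. As recalled in the Preliminaries, the vanishing of $N^{(1)}_{\phi}$ is exactly the normality condition for the almost contact structure $(\phi,\xi,\eta)$; hence $(M,\phi,\xi,\eta,g)$ is a normal contact metric manifold, that is, a Sasakian manifold, which is the assertion.

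Since this argument is a one-line substitution, there is no real obstacle: the only point to keep in mind is that \eqref{contatto3} was derived under the standing hypothesis that the integrability condition \eqref{integrabile1} holds, so the statement genuinely requires the manifold to be integrable and not merely contact metric (indeed there exist $K$-contact, non-Sasakian structures in dimension $\geq 5$, which are therefore necessarily non-integrable). One could alternatively bypass the Proposition and argue directly from \eqref{contatto2}, \eqref{acca2} and \eqref{integrabile1} with $h=0$, which forces $(\nabla^{g}_X\phi)Y=g(X,Y)\xi-\eta(Y)X$, the standard equivalent form of the Sasakian condition; but invoking \eqref{contatto3} is the most economical route.
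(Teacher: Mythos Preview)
Your argument is correct and is exactly the one the paper intends: the corollary is stated immediately after Proposition~2.2 with no proof, precisely because substituting $h\equiv 0$ into \eqref{contatto3} gives $N^{(1)}_{\phi}\equiv 0$ and hence Sasakian. Your alternative route via \eqref{integrabile1} with $h=0$ yielding $(\nabla^{g}_X\phi)Y=g(X,Y)\xi-\eta(Y)X$ is also valid but, as you note, less economical than invoking the proposition directly.
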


On the other hand on a contact manifold $(M,\eta)$ one can consider
also compatible paracontact metric structures. We recall (cf.
\cite{kaneyuki1}) that an \emph{almost paracontact structure} on a
$(2n+1)$-dimensional smooth manifold $M$ is given by a
$(1,1)$-tensor field $\tilde\phi$, a vector field $\xi$ and a
$1$-form $\eta$ satisfying the following conditions
\begin{enumerate}
  \item[(i)] $\eta(\xi)=1$, \ $\tilde\phi^2=I-\eta\otimes\xi$,
  \item[(ii)] the tensor field $\tilde\phi$ induces an almost paracomplex
  structure on each fibre on ${\mathcal D}=\ker(\eta)$.
\end{enumerate}
Recall that an almost paracomplex structure on a $2n$-dimensional
smooth manifold is a tensor field $\tilde{J}$ of type $(1,1)$ such
that $\tilde{J}\neq I$, $\tilde{J}^2=I$ and the eigendistributions
$T^+, T^-$ corresponding to the eigenvalues $1, -1$ of $\tilde{J}$,
respectively, have dimension $n$.

As an immediate consequence of the definition one has that
$\tilde\phi\xi=0$, $\eta\circ\tilde\phi=0$ and the field of
endomorphisms $\tilde\phi$ has constant rank $2n$. As for the almost
contact case, one can consider the almost paracomplex structure on
$M\times\mathbb{R}$ defined by
$\tilde{J}\bigl(X,f\frac{d}{dt}\bigr)=\bigl(\tilde{\phi}X+f\xi,\eta(X)\frac{d}{dt}\bigr)$,
where $X$ is a vector field on $M$ and $f$ a $C^{\infty}$ function
on $M\times\mathbb{R}$. By definition, if $\tilde{J}$ is integrable,
the almost paracontact structure $(\tilde{\phi},\xi,\eta)$ is said
to be \emph{normal}. The computation of $\tilde{J}$ in terms of the
tensors of the almost paracontact structure leads us to define four
tensors
\begin{align}
&N^{(1)}_{\tilde\phi}(X,Y)=[\tilde\phi,\tilde\phi](X,Y)-2d\eta(X,Y)\xi,\label{n1p}\\
&N^{(2)}_{\tilde\phi}(X,Y)=({\mathcal
L}_{\tilde{\phi}X}\eta)(Y)-({\mathcal L}_{\tilde{\phi}Y}\eta)(X),\label{n2p}\\
&N^{(3)}_{\tilde\phi}(X)=({\mathcal L}_{\xi}\tilde{\phi})X,\label{n3p}\\
&N^{(4)}(X)=({\mathcal L}_{\xi}\eta)(X),\label{n4p}
\end{align}
The almost paracontact structure is then normal if and only if these
four tensors vanish. However, as it is shown in \cite{zamkovoy}, the
vanishing of $N^{(1)}_{\tilde\phi}$ implies the vanishing of the
remaining tensors.

Any almost paracontact manifold admits a semi-Riemannian metric
$\tilde g$ such that
\begin{equation}\label{compatibile}
\tilde g(\tilde\phi X,\tilde\phi Y)=-\tilde g(X,Y)+\eta(X)\eta(Y)
\end{equation}
for all  $X,Y\in\Gamma(TM)$. Then $(M,\tilde\phi,\xi,\eta,\tilde g)$
is called an \emph{almost paracontact metric manifold}. Notice that
any such a semi-Riemannian metric is necessarily of signature
$(n+1,n)$. Moreover, as in the almost contact case, from
\eqref{compatibile} it follows easily that $\eta=g(\cdot,\xi)$ and
$\tilde{g}(\cdot,\tilde\phi\cdot)=-\tilde{g}(\tilde\phi\cdot,\cdot)$.
Hence one defines the \emph{fundamental $2$-form} of the almost
paracontact metric manifold by
$\tilde\Phi(X,Y)=\tilde{g}(X,\tilde\phi Y)$. If $d\eta=\tilde\Phi$,
$\eta$ becomes a contact form and $(M,\tilde\phi,\xi,\eta,\tilde g)$
is said to be a \emph{paracontact metric manifold}.

On a paracontact metric manifold $(M,\tilde\phi,\xi,\eta,\tilde g)$
one has
\begin{equation}\label{acca3}
\nabla^{\tilde{g}}\xi=-\tilde\phi+\tilde\phi\tilde h
\end{equation}
\begin{equation}\label{paracontatto2}
N^{(1)}_{\tilde\phi}(X,Y)=(\nabla^{\tilde g}_{\tilde\phi
X}\tilde\phi)Y-(\nabla^{\tilde g}_{\tilde\phi
Y}\tilde\phi)X+(\nabla^{\tilde g}_{X}\tilde\phi)\tilde\phi
Y-(\nabla^{\tilde g}_{Y}\tilde\phi)\tilde\phi
X+\eta(Y)\nabla^{\tilde g}_{X}\xi-\eta(X)\nabla^{\tilde g}_{Y}\xi
\end{equation}
where $\tilde{h}:=\frac{1}{2}N_{\tilde\phi}^{(3)}$. One proves (see
\cite{zamkovoy}) that $\tilde h$ is symmetric with respect to
$\tilde{g}$  and $\tilde h$ vanishes identically if and only if
$\xi$ is a Killing vector field and in such case
$(M,\tilde\phi,\xi,\eta,\tilde g)$ is called a \emph{$K$-paracontact
manifold}. By using \eqref{acca3} one can prove (cf.
\cite{Mino-toap2}) the formula
\begin{equation}\label{formulacurvatura1}
R^{\tilde{g}}(X,Y)\xi=-(\nabla^{\tilde{g}}_{X}\tilde\phi)Y+(\nabla^{\tilde{g}}_{Y}\tilde\phi)X+(\nabla^{\tilde{g}}_{X}\tilde\phi)\tilde
h Y+\tilde\phi((\nabla^{\tilde{g}}_{X}\tilde
h)Y)-(\nabla^{\tilde{g}}_{Y}\tilde\phi)\tilde h
X-\tilde\phi((\nabla^{\tilde{g}}_{Y}\tilde h)X).
\end{equation}
A normal paracontact metric manifold is said to be a
\emph{para-Sasakian manifold}. Also in this context the
para-Sasakian condition implies the \emph{K}-paracontact condition
and the converse holds in dimension $3$. In terms of the covariant
derivative of $\tilde\phi$ the para-Sasakian condition may be
expressed by
\begin{equation}\label{condizioneparasasaki}
(\nabla^{\tilde{g}}_{X}\tilde\phi)Y=-\tilde g(X,Y)\xi+\eta(Y)X.
\end{equation}

In any paracontact metric manifold Zamkovoy introduced a canonical
connection which  plays the same role in paracontact geometry of the
generalized Tanaka-Webster connection (\cite{tanno}) in a contact
metric manifold. In fact the following result holds.

\begin{theorem}[\cite{zamkovoy}]\label{paratanaka}
On a paracontact metric manifold there exists a unique connection
${\nabla}^{pc}$, called the \emph{canonical paracontact connection},
satisfying the following properties:
\begin{enumerate}
  \item[(i)] ${\nabla}^{pc}\eta=0$, ${\nabla}^{pc}\xi=0$,
  ${\nabla}^{pc}\tilde{g}=0$,
  \item[(ii)]
  $(\nabla^{pc}_{X}\tilde\phi)Y=(\nabla^{\tilde{g}}_{X}\tilde\phi)Y-\eta(Y)(X-\tilde{h}X)+\tilde{g}(X-\tilde{h}X,Y)\xi$,
  \item[(iii)] $T^{pc}(\xi,\tilde\phi Y)=-\tilde\phi \tilde T^{pc}(\xi,Y)$,
  \item[(iv)] $T^{pc}(X,Y)=2d\eta(X,Y)\xi$ \ for all $X,Y\in\Gamma({\mathcal D})$.
\end{enumerate}
The explicit expression of this connection is the following
\begin{equation}\label{paradefinition}
\nabla^{pc}_{X}Y=\nabla^{\tilde{g}}_{X}Y+\eta(X)\tilde\phi
Y+\eta(Y)(\tilde\phi X-\tilde\phi \tilde{h}X)+\tilde
g(X-\tilde{h}X,\tilde\phi Y)\xi.
\end{equation}
Moreover, the torsion tensor field is given by
\begin{equation}\label{paratorsion}
T^{pc}(X,Y)=\eta(X)\tilde\phi\tilde{h}Y - \eta(Y)\tilde\phi
\tilde{h} X + 2\tilde{g}(X,\tilde\phi Y)\xi.
\end{equation}
\end{theorem}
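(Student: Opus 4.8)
The plan is to obtain \emph{existence} by taking \eqref{paradefinition} as the \emph{definition} of $\nabla^{pc}$ and verifying the four listed properties (together with formula \eqref{paratorsion}) by direct computation, and to obtain \emph{uniqueness} through a difference‑tensor argument. Throughout one uses the standard identities of a paracontact metric manifold: $\eta=\tilde g(\cdot,\xi)$, the skew‑symmetry $\tilde g(\cdot,\tilde\phi\cdot)=-\tilde g(\tilde\phi\cdot,\cdot)$, $\tilde\phi\xi=0$, $\eta\circ\tilde\phi=0$, $\tilde h\xi=0$, $\eta\circ\tilde h=0$, the symmetry of $\tilde h$ and the anti‑commutation $\tilde\phi\tilde h=-\tilde h\tilde\phi$, together with formula \eqref{acca3} for $\nabla^{\tilde g}\xi$ and the compatibility \eqref{compatibile}.

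For existence one first checks that the right‑hand side of \eqref{paradefinition} is a linear connection: the correction terms added to $\nabla^{\tilde g}$ are $C^\infty(M)$‑linear in $X$, and the only non‑tensorial term in $Y$, namely $\eta(Y)(\tilde\phi X-\tilde\phi\tilde hX)$, produces exactly the extra $X(\eta(Y))$ demanded by the Leibniz rule, since $\eta(\tilde\phi X-\tilde\phi\tilde hX)=0$. Property (i) follows by a short computation: one has $(\nabla^{\tilde g}_X\eta)(Y)=\tilde g(Y,\nabla^{\tilde g}_X\xi)=\tilde g(X-\tilde hX,\tilde\phi Y)$, and this is precisely cancelled by the last term of \eqref{paradefinition} in the expansion of $\eta(\nabla^{pc}_XY)$, giving $\nabla^{pc}\eta=0$; the same bookkeeping gives $\nabla^{pc}\xi=0$, and $\nabla^{pc}\tilde g=0$ then follows from $\nabla^{\tilde g}\tilde g=0$, \eqref{compatibile} and the skew‑symmetry of $\tilde g(\cdot,\tilde\phi\cdot)$.

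Subtracting \eqref{paradefinition} from itself with $X$ and $Y$ interchanged, and using that $\nabla^{\tilde g}$ is torsion‑free, yields \eqref{paratorsion}, after the cancellations $\eta(X)\tilde\phi Y-\eta(X)\tilde\phi Y+\eta(X)\tilde\phi\tilde hY=\eta(X)\tilde\phi\tilde hY$ and $\tilde g(X-\tilde hX,\tilde\phi Y)-\tilde g(Y-\tilde hY,\tilde\phi X)=2\tilde g(X,\tilde\phi Y)$, the latter relying on the skew‑symmetry of $\tilde g(\cdot,\tilde\phi\cdot)$, the symmetry of $\tilde h$ and $\tilde\phi\tilde h=-\tilde h\tilde\phi$. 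From \eqref{paratorsion}, property (iv) is immediate, since for $X,Y\in\Gamma(\mathcal D)$ only the last term survives and $\tilde g(X,\tilde\phi Y)=\tilde\Phi(X,Y)=d\eta(X,Y)$; and property (iii) follows by substituting $\tilde\phi Y$ for $Y$ in \eqref{paratorsion} and using $\tilde\phi^2=I-\eta\otimes\xi$, $\tilde\phi\xi=0$, $\tilde g(\xi,\tilde\phi Y)=0$. Finally, property (ii) is obtained by expanding $(\nabla^{pc}_X\tilde\phi)Y=\nabla^{pc}_X(\tilde\phi Y)-\tilde\phi(\nabla^{pc}_XY)$ directly from \eqref{paradefinition}, once more using $\tilde\phi^2Y=Y-\eta(Y)\xi$; all the $\eta$‑homogeneous terms cancel and one is left with exactly the right‑hand side of (ii).

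For uniqueness, suppose $\nabla,\nabla'$ both satisfy (i)--(iv) and set $A(X,Y):=\nabla_XY-\nabla'_XY$, a tensor field of type $(1,2)$. Property (i) gives $A(X,\xi)=0$, $\eta(A(X,Y))=0$ (so $A$ is $\mathcal D$‑valued), and $\tilde g(A(X,Y),Z)+\tilde g(Y,A(X,Z))=0$; property (iv) gives $A(X,Y)=A(Y,X)$ for $X,Y\in\Gamma(\mathcal D)$ (the torsions agree there). Applying these two relations alternately one obtains $\tilde g(A(X,Y),Z)=-\tilde g(A(X,Y),Z)$ for all $X,Y,Z\in\Gamma(\mathcal D)$, so, $\tilde g|_{\mathcal D\times\mathcal D}$ being non‑degenerate, $A$ vanishes on $\Gamma(\mathcal D)\times\Gamma(\mathcal D)$. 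To kill $A(\xi,\cdot)$: property (ii) for both connections gives $A(X,\tilde\phi Y)=\tilde\phi A(X,Y)$, hence $A(\xi,\tilde\phi Y)=\tilde\phi A(\xi,Y)$; property (iii) for both, together with $A(\cdot,\xi)=0$ (so that $T(\xi,\cdot)-T'(\xi,\cdot)=A(\xi,\cdot)$), gives instead $A(\xi,\tilde\phi Y)=-\tilde\phi A(\xi,Y)$. Comparing, $\tilde\phi A(\xi,Y)=0$, and since $\ker\tilde\phi=\mathbb{R}\xi$ while $A(\xi,Y)\in\mathcal D$, we get $A(\xi,Y)=0$; decomposing $TM=\mathcal D\oplus\mathbb{R}\xi$ in both arguments then yields $A\equiv0$, i.e. $\nabla=\nabla'$. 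The existence verification is long but mechanical; the only genuinely delicate point is this last step, where neither (ii) nor (iii) alone pins down $A(\xi,\cdot)$ — it is precisely the incompatibility of $A(\xi,\tilde\phi Y)=+\tilde\phi A(\xi,Y)$ with $A(\xi,\tilde\phi Y)=-\tilde\phi A(\xi,Y)$ that forces it to vanish, so both defining properties are genuinely needed.
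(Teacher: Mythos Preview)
The paper does not give its own proof of this theorem: it is stated in the preliminaries as a result of Zamkovoy \cite{zamkovoy} and quoted without argument. So there is nothing in the paper to compare your proposal against.

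That said, your argument is correct and is the natural one (and presumably close to Zamkovoy's): define $\nabla^{pc}$ by the explicit formula, verify (i)--(iv) and \eqref{paratorsion} by direct computation, and prove uniqueness via the difference tensor $A=\nabla-\nabla'$. The uniqueness step is carried out cleanly: the Koszul‑type cycling using the symmetry of $A$ on $\mathcal D$ (from (iv)) and the skew‑symmetry coming from $\nabla^{pc}\tilde g=0$ kills $A$ on $\mathcal D\times\mathcal D$, and the interplay of (ii) (giving $A(\xi,\tilde\phi Y)=\tilde\phi A(\xi,Y)$) with (iii) (giving $A(\xi,\tilde\phi Y)=-\tilde\phi A(\xi,Y)$) kills $A(\xi,\cdot)$.

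One small expository slip: you write that $\eta(Y)(\tilde\phi X-\tilde\phi\tilde hX)$ is ``the only non-tensorial term in $Y$'' which ``produces exactly the extra $X(\eta(Y))$ demanded by the Leibniz rule''. In fact all the correction terms in \eqref{paradefinition} are $C^\infty(M)$-linear in $Y$ as well as in $X$ (since $\eta(fY)=f\eta(Y)$), so the correction is a genuine $(1,2)$-tensor and adding it to $\nabla^{\tilde g}$ automatically yields a connection; no Leibniz bookkeeping is needed. This does not affect the validity of your proof, only the phrasing of that sentence.
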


If the paracontact metric connection preserves the structure tensor
$\tilde\phi$, that is the Levi Civita connection satisfies
\begin{equation}\label{integrabile2}
(\nabla^{\tilde{g}}_{X}\tilde\phi)Y=\eta(Y)(X-\tilde{h}X)-\tilde{g}(X-\tilde{h}X,Y)\xi
\end{equation}
for any $X,Y\in\Gamma(TM)$, then the paracontact metric structure
$(\tilde\phi,\xi,\eta,\tilde{g})$ is said to be \emph{integrable}.
This is the case, in particular, when the eigendistributions
$T^{\pm}$ of $\tilde\phi$ associated to the eigenvalues $\pm 1$ are
involutive. Moreover, \eqref{integrabile2} and
\eqref{condizioneparasasaki} it follows that any para-Sasakian
manifold is integrable. By replacing \eqref{integrabile2} and
\eqref{acca3} in \eqref{paracontatto2} one can straightforwardly
prove the following proposition.

\begin{proposition}
In a integrable paracontact metric manifold
\begin{equation}\label{paracontatto3}
N^{(1)}_{\tilde\phi}(X,Y)=2\bigl(\eta(Y)\tilde\phi\tilde{h}X-\eta(X)\tilde\phi\tilde{h}Y\bigr).
\end{equation}
\end{proposition}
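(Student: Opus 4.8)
The plan is to substitute the integrability hypothesis \eqref{integrabile2} into the general formula \eqref{paracontatto2} for $N^{(1)}_{\tilde\phi}$ and simplify, using the basic identities of a paracontact metric manifold. First I would expand each of the six terms of \eqref{paracontatto2}. Applying \eqref{integrabile2} with $X$ replaced by $\tilde\phi X$ gives
\[
(\nabla^{\tilde g}_{\tilde\phi X}\tilde\phi)Y=\eta(Y)(\tilde\phi X-\tilde h\tilde\phi X)-\tilde g(\tilde\phi X-\tilde h\tilde\phi X,Y)\xi,
\]
since $\eta(\tilde\phi X)=0$; similarly for the term with $X,Y$ swapped. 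For the remaining two terms I would apply \eqref{integrabile2} with $Y$ replaced by $\tilde\phi Y$ (using $\eta(\tilde\phi Y)=0$), obtaining
\[
(\nabla^{\tilde g}_{X}\tilde\phi)\tilde\phi Y=-\tilde g(X-\tilde hX,\tilde\phi Y)\xi,
\]
and the analogous expression with $X,Y$ interchanged. Finally, the last two summands of \eqref{paracontatto2} are handled directly via \eqref{acca3}: $\eta(Y)\nabla^{\tilde g}_X\xi-\eta(X)\nabla^{\tilde g}_Y\xi = \eta(Y)(-\tilde\phi X+\tilde\phi\tilde hX)-\eta(X)(-\tilde\phi Y+\tilde\phi\tilde hY)$.

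Next I would collect terms. The four $\xi$-valued contributions are $-\tilde g(\tilde\phi X-\tilde h\tilde\phi X,Y)\xi+\tilde g(\tilde\phi Y-\tilde h\tilde\phi Y,X)\xi-\tilde g(X-\tilde hX,\tilde\phi Y)\xi+\tilde g(Y-\tilde hY,\tilde\phi X)\xi$. Using $\tilde g(\cdot,\tilde\phi\cdot)=-\tilde g(\tilde\phi\cdot,\cdot)$ one sees $-\tilde g(\tilde\phi X,Y)=\tilde g(X,\tilde\phi Y)$, so the $\tilde\phi$-but-not-$\tilde h$ parts cancel in pairs. For the parts involving $\tilde h$, I would use that $\tilde h$ anti-commutes with $\tilde\phi$ (standard in paracontact geometry, coming from $N^{(4)}=0$ and the definition of $\tilde h$ via $N^{(3)}$) and that $\tilde h$ is $\tilde g$-symmetric, which turns $\tilde g(\tilde h\tilde\phi X,Y)$ into $-\tilde g(\tilde\phi\tilde hX,Y)=\tilde g(\tilde hX,\tilde\phi Y)$, so these too cancel against the $-\tilde g(\tilde hX,\tilde\phi Y)\xi$ type terms. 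Hence all $\xi$-valued terms vanish. Among the non-$\xi$ terms, the $\eta(Y)\tilde\phi X$ contributions from the first two summands are $\eta(Y)\tilde\phi X-\eta(X)\tilde\phi Y$, and from \eqref{acca3} we get $-\eta(Y)\tilde\phi X+\eta(X)\tilde\phi Y$; these cancel. What survives is precisely $-\eta(Y)\tilde h\tilde\phi X+\eta(X)\tilde h\tilde\phi Y+\eta(Y)\tilde\phi\tilde hX-\eta(X)\tilde\phi\tilde hY=2\eta(Y)\tilde\phi\tilde hX-2\eta(X)\tilde\phi\tilde hY$, again using $\tilde h\tilde\phi=-\tilde\phi\tilde h$, which is exactly \eqref{paracontatto3}.

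I expect the only real bookkeeping obstacle to be the consistent use of the sign conventions: the formula \eqref{paracontatto2} has $+\eta(Y)\nabla^{\tilde g}_X\xi$ rather than the $-\eta(Y)\nabla^g_X\xi$ of the contact case \eqref{contatto2}, and \eqref{acca3} reads $\nabla^{\tilde g}\xi=-\tilde\phi+\tilde\phi\tilde h$ (note the plus), so one must be careful that the $\eta(Y)\tilde\phi X$-type terms really do cancel and that the surviving $\tilde h$-terms combine with the right coefficient $2$. The one structural fact used beyond the quoted formulas is the anti-commutation $\tilde\phi\tilde h=-\tilde h\tilde\phi$ together with symmetry of $\tilde h$; both are recalled in the paragraph following \eqref{paracontatto2} (symmetry explicitly, anti-commutation being implicit in $\tilde\phi\xi=0$ and $\eta\circ\tilde h=0$, which force $\tilde h$ to preserve $\mathcal D$ and anti-commute with $\tilde\phi$ there). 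With these in hand the computation is, as the paper says, straightforward, and yields \eqref{paracontatto3}.
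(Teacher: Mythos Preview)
Your proposal is correct and follows exactly the approach indicated in the paper, namely substituting \eqref{integrabile2} and \eqref{acca3} into \eqref{paracontatto2} and simplifying. One small remark: your parenthetical justification of the anti-commutation $\tilde h\tilde\phi=-\tilde\phi\tilde h$ is not quite right (the conditions $\tilde\phi\xi=0$ and $\eta\circ\tilde h=0$ alone do not force anti-commutation); the correct reason, as in Lemma~\ref{lemma1}(a), is to apply ${\mathcal L}_\xi$ to $\tilde\phi^2=I-\eta\otimes\xi$ and use ${\mathcal L}_\xi\eta=0$.
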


\begin{corollary}\label{paracontatto4}
Any integrable $K$-paracontact manifold is  para-Sasakian.
\end{corollary}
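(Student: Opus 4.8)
The plan is to combine the defining property of a $K$-paracontact manifold with the integrability condition \eqref{integrabile2}. Recall that $(M,\tilde\phi,\xi,\eta,\tilde g)$ is $K$-paracontact precisely when the operator $\tilde h$ vanishes identically. Substituting $\tilde h=0$ into \eqref{integrabile2}, the integrability condition collapses to $(\nabla^{\tilde g}_{X}\tilde\phi)Y=-\tilde g(X,Y)\xi+\eta(Y)X$, which is exactly the characterization \eqref{condizioneparasasaki} of the para-Sasakian condition in terms of the Levi Civita covariant derivative of $\tilde\phi$. Hence $(M,\tilde\phi,\xi,\eta,\tilde g)$ is para-Sasakian, and this is the shortest route to the statement.

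Alternatively, one can argue entirely at the level of the normality tensors, which is the version I would actually write down since it uses only the preceding Proposition (formula \eqref{paracontatto3}) together with the normality criterion recalled after \eqref{n4p}. Indeed, in an integrable paracontact metric manifold one has $N^{(1)}_{\tilde\phi}(X,Y)=2\bigl(\eta(Y)\tilde\phi\tilde h X-\eta(X)\tilde\phi\tilde h Y\bigr)$; imposing $\tilde h=0$ gives $N^{(1)}_{\tilde\phi}\equiv 0$, and by the result of Zamkovoy (cited in the excerpt) the vanishing of $N^{(1)}_{\tilde\phi}$ already forces the vanishing of $N^{(2)}_{\tilde\phi}$, $N^{(3)}_{\tilde\phi}$ and $N^{(4)}$ as well. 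Therefore the almost paracontact structure is normal, i.e. the manifold is para-Sasakian.

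There is essentially no technical obstacle here: both arguments amount to a single substitution ($\tilde h=0$) into an identity that has already been established. The only point that genuinely requires care — and which has been settled earlier in the excerpt — is that, for almost paracontact structures, normality is equivalent to the vanishing of $N^{(1)}_{\tilde\phi}$ alone; this is precisely what lets the second argument conclude without separately inspecting the remaining three tensors. I would present the proof in the first (and most transparent) form, mentioning the second as a remark.
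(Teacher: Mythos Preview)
Your proposal is correct. The paper gives no explicit proof and simply records the statement as a corollary of the preceding Proposition (formula \eqref{paracontatto3}), so the intended argument is precisely your second one: $\tilde h=0$ forces $N^{(1)}_{\tilde\phi}\equiv 0$, hence normality, hence para-Sasakian. Your first argument via \eqref{integrabile2} and \eqref{condizioneparasasaki} is equally valid and just as short; either route is a one-line substitution, and there is no substantive difference between them.
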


\subsection{Bi-Legendrian manifolds}\label{preliminari}
Let $(M,\eta)$ be a $(2n+1)$-dimensional contact manifold. It is
well-known that the contact condition $\eta\wedge(d\eta)^n\neq 0$
geometrically means that the contact distribution $\mathcal D$ is as
far as possible from being integrable. In fact one can prove that
the maximal dimension of an involutive subbundle of $\mathcal D$ is
$n$. Such $n$-dimensional integrable distributions are called
\emph{Legendre foliations} of $(M,\eta)$. More  generally a
\emph{Legendre distribution} on a contact manifold $(M,\eta)$ is an
$n$-dimensional subbundle $L$ of the contact distribution not
necessarily integrable but verifying the weaker condition that
$d\eta\left(X,X'\right)=0$ for all $X,X'\in\Gamma\left(L\right)$. \
The theory of Legendre foliations has been extensively investigated
in recent years from various points of views. In particular  Pang
(\cite{pang}) provided a classification of Legendre foliations by
using
 a bilinear symmetric form $\Pi_{\mathcal F}$ on the tangent bundle
of the foliation ${\mathcal F}$, defined by
\begin{equation}\label{invariante3}
\Pi_{\mathcal F}\left(X,X'\right)=-\left({\mathcal L}_{X}{\mathcal
L}_{X'}\eta\right)\left(\xi\right)=2d\eta([\xi,X],X').
\end{equation}
He called a Legendre foliation \emph{positive (negative) definite},
\emph{non-degenerate}, \emph{degenerate} or \emph{flat} according to
the circumstance that the bilinear form $\Pi_{\mathcal F}$ is
positive (negative) definite, non-degenerate, degenerate or vanishes
identically, respectively. By \eqref{invariante3} it follows that
$\mathcal F$ is flat if and only if $\xi$ is ``foliate'', i.e.
$[\xi,X]\in\Gamma(T{\mathcal F})$ for any $X\in\Gamma(T{\mathcal
F})$.

If $(M,\eta)$  is endowed with  two  transversal  Legendre
distributions $L_1$ and $L_2$,  we  say  that $(M,\eta,L_1,L_2)$ is
an \emph{almost bi-Legendrian manifold}. Thus, in particular, the
tangent bundle of $M$ splits up as the direct sum $TM=L_1\oplus
L_2\oplus\mathbb{R}\xi$. When both $L_1$ and $L_2$ are integrable we
refer to a \emph{bi-Legendrian manifold}. An (almost) bi-Legendrian
manifold is said to be flat, degenerate or non-degenerate if and
only if both the Legendre distributions are flat, degenerate or
non-degenerate, respectively.   Any contact manifold $(M,\eta)$
endowed with a Legendre distribution $L$ admits a canonical almost
bi-Legendrian structure. Indeed let $(\phi,\xi,\eta,g)$ be a
compatible contact metric structure. Then  the relation $d\eta(\phi
X,\phi Y)=\Phi(\phi X,\phi Y)=d\eta(X,Y)$  easily implies that
$Q:=\phi L$ is a Legendre distribution on $M$ which is
$g$-orthogonal to $L$. $Q$ is usually referred as the
\emph{conjugate Legendre distribution} of $L$ and in general is not
involutive, even if $L$ is. \ In \cite{Mino-05} the existence of  a
canonical connection on an almost bi-Legendrian manifold has been
proven:

\begin{theorem}[\cite{Mino-05}]\label{biconnection}
Let $(M,\eta,L_1,L_2)$ be an almost bi-Legendrian manifold. There
exists a unique linear connection ${\nabla}^{bl}$ called  the
\emph{bi-Legendrian connection}, satisfying the following
properties:
\begin{enumerate}
  \item[(i)] ${\nabla}^{bl} L_1\subset L_1$, \ ${\nabla}^{bl} L_2\subset L_2$,
  \item[(ii)]  ${\nabla}^{bl}\xi=0$, \ ${\nabla}^{bl} d\eta=0$,
  \item[(iii)] ${T}^{bl}\left(X,Y\right)=2d\eta\left(X,Y\right){\xi}$ \ for all
  $X\in\Gamma(L_1)$, $Y\in\Gamma(L_2)$,\\
${T}^{bl}\left(X,\xi\right)=[\xi,X_{L_1}]_{L_2}+[\xi,X_{L_2}]_{L_1}$
  \ for all   $X\in\Gamma\left(TM\right)$,
\end{enumerate}
where $X_{L_1}$ and $X_{L_2}$ the projections of $X$ onto the
subbundles $L_1$ and $L_2$ of $TM$, respectively. Furthermore, the
torsion tensor field $T^{bl}$ of $\nabla^{bl}$ is explicitly given
by
\begin{align}\label{torsione}
T^{bl}(X,Y)&=-[X_{L_1},Y_{L_1}]_{L_2\oplus\mathbb{R}\xi}-[X_{L_2},Y_{L_2}]_{L_1\oplus\mathbb{R}\xi}+2d\eta(X,Y)\xi\nonumber\\
&\quad+\eta(Y)\left([\xi,X_{L_1}]_{L_2}+[\xi,X_{L_2}]_{L_1}\right)-\eta(X)\left([\xi,Y_{L_1}]_{L_2}+[\xi,Y_{L_2}]_{L_1}\right).
\end{align}
\end{theorem}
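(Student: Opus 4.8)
The plan is to establish existence and uniqueness separately, in the standard way one handles connection-with-prescribed-torsion problems, exploiting the splitting $TM = L_1 \oplus L_2 \oplus \mathbb{R}\xi$. For \emph{uniqueness}, I would argue that properties (i) and (ii) force $\nabla^{bl}$ to act in a prescribed way on each of the three summands: if $X \in \Gamma(L_1)$ then $\nabla^{bl}_Y X$ must lie in $\Gamma(L_1)$ by (i), similarly for $L_2$, and $\nabla^{bl}\xi = 0$ pins down the $\mathbb{R}\xi$-component. Combining this with the torsion prescription in (iii), $\nabla^{bl}_X Y - \nabla^{bl}_Y X - [X,Y] = T^{bl}(X,Y)$, one can solve for $\nabla^{bl}_X Y$ component by component: feeding in $X \in \Gamma(L_i)$, $Y \in \Gamma(L_j)$ and $\xi$ in all combinations, the torsion condition together with the invariance of the $L_i$ and the condition $\nabla^{bl} d\eta = 0$ (which controls pairings of $L_1$ with $L_2$, since $d\eta$ is nondegenerate on $L_1 \oplus L_2$) determines every covariant derivative uniquely. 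This yields at most one such connection.

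For \emph{existence}, I would simply write down a candidate formula dictated by the uniqueness analysis and verify it satisfies (i)--(iii). Concretely, for $X, Y \in \Gamma(\mathcal D)$ one sets $\nabla^{bl}_X Y$ to have $L_1$-component $[X_{L_2}, Y_{L_1}]_{L_1} + [X_{L_1}, Y_{L_1}]_{L_1}$-type terms (the precise bracket projections are forced by the torsion formula \eqref{torsione}), $L_2$-component the symmetric expression, and no $\xi$-component; on vectors involving $\xi$ one uses $\nabla^{bl}_X \xi = 0$ and $\nabla^{bl}_\xi Y = [\xi, Y_{L_1}]_{L_1} + [\xi, Y_{L_2}]_{L_2}$. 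Then checking (i) is immediate from the construction, checking $\nabla^{bl}\xi = 0$ is built in, checking $\nabla^{bl} d\eta = 0$ reduces to the identity $(\mathcal L_\xi d\eta) = 0$ from \eqref{contatto4} together with the fact that $d\eta$ pairs $L_1$ with $L_2$ and the Legendre condition $d\eta|_{L_i} = 0$, and checking the torsion formula (iii) and \eqref{torsione} is a direct (if tedious) bracket computation. One must also verify $C^\infty(M)$-linearity in the appropriate slot, i.e. that the formula genuinely defines a connection, which follows because the bracket-projection terms that are not tensorial cancel against the torsion terms by design.

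I expect the main obstacle to be the bookkeeping in the existence half: organizing the nine cases coming from the decomposition of both arguments into $L_1$, $L_2$, $\mathbb{R}\xi$ parts, and making sure the chosen bracket-projection formula is simultaneously (a) well-defined as a connection, (b) compatible with $\nabla^{bl} d\eta = 0$, and (c) produces exactly the prescribed torsion in (iii) and the closed form \eqref{torsione}. The conceptually delicate point is \eqref{torsione} itself: one should derive it by plugging the explicit $\nabla^{bl}$ into $T^{bl}(X,Y) = \nabla^{bl}_X Y - \nabla^{bl}_Y X - [X,Y]$, expanding $X = X_{L_1} + X_{L_2} + \eta(X)\xi$ and likewise for $Y$, and collecting terms — the mixed $L_1$--$L_2$ brackets reassemble into $2d\eta(X,Y)\xi$ via the contact identity $2d\eta(X,Y) = -\eta([X,Y])$ for $X,Y \in \Gamma(\mathcal D)$, and the $\xi$-involving brackets give the $[\xi, X_{L_i}]$ terms. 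Everything else is routine once the candidate formula is correctly guessed from the uniqueness argument.
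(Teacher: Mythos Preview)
The paper does not actually prove this theorem: it is stated in the preliminaries as a result quoted from \cite{Mino-05}, with no argument given. So there is no ``paper's own proof'' to compare your proposal against.

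That said, your outline is the standard and correct strategy for this kind of result, and it matches in spirit what one finds in \cite{Mino-05}: use the splitting $TM=L_1\oplus L_2\oplus\mathbb{R}\xi$ to reduce both uniqueness and existence to a case analysis, with the torsion prescription (iii) forcing formulas like $\nabla^{bl}_{X}Y=[X,Y]_{L_2}$ for $X\in\Gamma(L_1)$, $Y\in\Gamma(L_2)$, the condition $\nabla^{bl}d\eta=0$ together with the nondegeneracy of $d\eta$ on $L_1\times L_2$ pinning down $\nabla^{bl}_{X}X'$ for $X,X'\in\Gamma(L_1)$ (and symmetrically on $L_2$), and $\nabla^{bl}\xi=0$ plus the second torsion condition handling the $\xi$-directions. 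Your description of the existence verification and of the derivation of \eqref{torsione} is accurate; the only thing to be careful about is that the ``same-leg'' covariant derivatives $\nabla^{bl}_{X}X'$ for $X,X'\in\Gamma(L_1)$ are determined not by the torsion condition but by $\nabla^{bl}d\eta=0$ paired against $L_2$, which you mention but do not make fully explicit.
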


%

In \cite{Mino-08} the interplays between paracontact geometry and
the theory of bi-Legendrian structures have been studied. More
precisely it has been proven the existence of a biunivocal
correspondence $\Psi:{\mathcal{B}}\longrightarrow{\mathcal{P}}$
 between the set ${\mathcal{B}}$ of  almost bi-Legendrian structures
 and the set $\mathcal{P}$ of paracontact metric structures  on the same contact manifold
$(M,\eta)$. This bijection maps bi-Legendrian structures onto
integrable paracontact structures, flat almost bi-Legendrian
structures onto \emph{K}-paracontact structures and flat
bi-Legendrian structures onto para-Sasakian structures. For the
convenience of the reader we recall more explicitly how the above
biunivocal correspondence is defined. If $(L_1,L_2)$ is an almost
bi-Legendrian structure on $(M,\eta)$, the corresponding paracontact
metric structure $(\tilde\phi,\xi,\eta,\tilde g)=\Psi(L_1,L_2)$ is
given by
\begin{equation}\label{biunivoca}
\tilde\phi|_{L_1}=I, \ \tilde\phi|_{L_2}=-I, \ \tilde\phi\xi=0, \ \
\tilde g:=d\eta(\cdot,\tilde\phi \cdot)+\eta\otimes\eta.
\end{equation}
Moreover, the relationship between the bi-Legendrian and the
canonical paracontact connections has been investigated, proving
that in the integrable case they in fact coincide:

\begin{theorem}[\cite{Mino-08}]\label{connection}
Let  $(M,\eta,L_1,L_2)$  be  an  almost  bi-Legendrian  manifold and
 let $(\tilde\phi,\xi,\eta,\tilde g)=\Psi(L_1,L_2)$ be the paracontact metric
structure induced on $M$ by \eqref{biunivoca}. Let $\nabla^{bl}$ and
$\nabla^{pc}$ be the corresponding bi-Legendrian and canonical
paracontact connections. Then
\begin{enumerate}
  \item[(a)] $\nabla^{bl}\tilde\phi=0$, $\nabla^{bl}\tilde g=0$,
  \item[(b)] the bi-Legendrian and the canonical paracontact connections coincide if and only if
  the induced paracontact metric structure is integrable.
\end{enumerate}
\end{theorem}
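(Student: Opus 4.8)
The plan is to exploit the uniqueness statements of Theorems \ref{biconnection} and \ref{paratanaka} together with the explicit description \eqref{biunivoca} of $\tilde\phi$ and $\tilde g$ relative to the splitting $TM=L_1\oplus L_2\oplus\mathbb{R}\xi$.

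For part (a) I would first observe that property (i) of $\nabla^{bl}$ together with $\nabla^{bl}\xi=0$ means that $\nabla^{bl}$ parallelizes each of the three subbundles $L_1$, $L_2$, $\mathbb{R}\xi$; since $\eta(X)\xi$ is precisely the $\mathbb{R}\xi$-component of $X$ in this splitting and $\nabla^{bl}\xi=0$, this already forces $\nabla^{bl}\eta=0$. As $\tilde\phi$ acts as a constant multiple of the identity on each of these $\nabla^{bl}$-parallel subbundles ($+I$ on $L_1$, $-I$ on $L_2$, $0$ on $\mathbb{R}\xi$), the identity $(\nabla^{bl}_X\tilde\phi)Y=\nabla^{bl}_X(\tilde\phi Y)-\tilde\phi(\nabla^{bl}_X Y)$ vanishes on each summand, so $\nabla^{bl}\tilde\phi=0$. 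Then $\nabla^{bl}\tilde g=0$ follows from $\tilde g=d\eta(\cdot,\tilde\phi\cdot)+\eta\otimes\eta$ and the Leibniz rule, using $\nabla^{bl}d\eta=0$, $\nabla^{bl}\tilde\phi=0$ and $\nabla^{bl}\eta=0$.

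For part (b) the pivotal observation is that $\nabla^{pc}\tilde\phi=0$ if and only if the paracontact metric structure is integrable: substituting $(\nabla^{pc}_X\tilde\phi)Y=0$ into property (ii) of Theorem \ref{paratanaka} yields exactly the integrability identity \eqref{integrabile2}, and conversely. Hence the implication ``$\nabla^{bl}=\nabla^{pc}\Rightarrow$ integrable'' is immediate from part (a), since then $\nabla^{pc}\tilde\phi=\nabla^{bl}\tilde\phi=0$. For the converse I would assume integrability, so that $\nabla^{pc}\tilde\phi=0$, and check that $\nabla^{pc}$ satisfies the three defining properties of $\nabla^{bl}$ in Theorem \ref{biconnection}, whence $\nabla^{pc}=\nabla^{bl}$ by uniqueness. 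Property (i) holds because $\nabla^{pc}$ preserves $\tilde\phi$, and $L_1$, $L_2$, $\mathbb{R}\xi$ are its $+1$-, $-1$- and $0$-eigenbundles. In (ii), $\nabla^{pc}\xi=0$ is part of Theorem \ref{paratanaka}(i), while $\nabla^{pc}d\eta=0$ follows from $d\eta(X,Y)=\tilde g(X,\tilde\phi Y)$ together with $\nabla^{pc}\tilde g=0$ and $\nabla^{pc}\tilde\phi=0$. For (iii) I would read off the explicit torsion formula \eqref{paratorsion}: for $X\in\Gamma(L_1)$ and $Y\in\Gamma(L_2)$ it gives $T^{pc}(X,Y)=2\tilde g(X,\tilde\phi Y)\xi=2d\eta(X,Y)\xi$, and for arbitrary $X$ it gives $T^{pc}(X,\xi)=-\tilde\phi\tilde h X$.

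The only genuinely computational point, which I expect to be the main obstacle, is to identify $-\tilde\phi\tilde h X$ with $[\xi,X_{L_1}]_{L_2}+[\xi,X_{L_2}]_{L_1}$. For this I would compute $\tilde h=\frac{1}{2}{\mathcal L}_{\xi}\tilde\phi$ on each eigenbundle: since $i_{\xi}d\eta=0$, a bracket $[\xi,X]$ lies in $\Gamma({\mathcal D})=\Gamma(L_1\oplus L_2)$ whenever $X\in\Gamma({\mathcal D})$, and writing $[\xi,X]=[\xi,X]_{L_1}+[\xi,X]_{L_2}$ one finds $\tilde h X=[\xi,X]_{L_2}$ for $X\in\Gamma(L_1)$ and $\tilde h X=-[\xi,X]_{L_1}$ for $X\in\Gamma(L_2)$; applying $\tilde\phi$ and extending by linearity (with $\tilde h\xi=0$) yields the required identity. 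Note that no involutivity assumption on $L_1$ or $L_2$ enters the argument, so the equivalence in (b) is obtained without invoking the classification of the bijection $\Psi$.
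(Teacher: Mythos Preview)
This theorem is not proved in the present paper: it is quoted in the preliminaries (Section~\ref{preliminari}) as a result of \cite{Mino-08}, with no proof given here. So there is no ``paper's own proof'' to compare your proposal against.

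That said, your argument is correct and is the natural one. Part~(a) is exactly right: the key point is that $\tilde\phi$ is diagonal on the $\nabla^{bl}$-parallel splitting $L_1\oplus L_2\oplus\mathbb{R}\xi$, whence $\nabla^{bl}\tilde\phi=0$; then $\nabla^{bl}\tilde g=0$ follows from $\nabla^{bl}d\eta=0$, $\nabla^{bl}\eta=0$ and the Leibniz rule. Part~(b) is also correct, and your ``main obstacle'' computation is fine: for $X\in\Gamma(L_1)$ one has $2\tilde hX=[\xi,\tilde\phi X]-\tilde\phi[\xi,X]=[\xi,X]-\tilde\phi[\xi,X]=2[\xi,X]_{L_2}$, and similarly $\tilde hX=-[\xi,X]_{L_1}$ for $X\in\Gamma(L_2)$, so applying $-\tilde\phi$ gives exactly $[\xi,X_{L_1}]_{L_2}+[\xi,X_{L_2}]_{L_1}$. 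The uniqueness clauses of Theorems~\ref{biconnection} and~\ref{paratanaka} then close the argument as you describe. This is in fact the approach taken in \cite{Mino-08}.
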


\section{Almost bi-paracontact structures on contact
manifolds}\label{biparacontact}

\begin{definition}
Let $(M,\eta)$ be a contact manifold. An \emph{almost bi-paracontact
structure} on $(M,\eta)$ is a triplet $(\phi_1,\phi_2,\phi_3)$ where
$\phi_3$ is an almost contact structure compatible with $\eta$, and
$\phi_1$, $\phi_2$ are two anti-commuting tensors on $M$ such that
$\phi_{1}^{2}=\phi_{2}^{2}=I-\eta\otimes\xi$ and
$\phi_{1}\phi_{2}=\phi_3$.
\end{definition}

The manifold $M$ endowed with such a geometrical structure is called
an \emph{almost bi-paracontact manifold}. From the definition it
easily follows that $\phi_{1}\phi_{3}=-\phi_{3}\phi_{1}=\phi_{2}$
and $\phi_{3}\phi_{2}=-\phi_{2}\phi_{3}=\phi_{1}$.


For each $\alpha\in\left\{1,2,3\right\}$ let
${\mathcal{D}}^{+}_{\alpha}$ and ${\mathcal{D}}^{-}_{\alpha}$ denote
the eigendistributions of $\phi_\alpha$ corresponding, respectively,
to the eigenvalues $1$ and $-1$.
Notice that, as $\phi_\alpha\xi=0$, ${\mathcal{D}}^{+}_{\alpha}$ and
${\mathcal{D}}^{-}_{\alpha}$ are in fact subbundles of the contact
distribution. In the following proposition we collect  some
properties of those distributions.

\begin{proposition}\label{proprieta1}
Let $(M,\eta,\phi_1,\phi_2,\phi_3)$ be an almost bi-paracontact
manifold. Then
\begin{itemize}
  \item[1.] $\phi_1({\mathcal D}_{2}^{+})={\mathcal D}_{2}^{-}$, $\phi_1({\mathcal D}_{2}^{-})={\mathcal
  D}_{2}^{+}$,
  \item[2.] $\phi_2({\mathcal D}_{1}^{+})={\mathcal D}_{1}^{-}$, $\phi_2({\mathcal D}_{1}^{-})={\mathcal
  D}_{1}^{+}$,
  \item[3.] $\phi_3({\mathcal D}_{\alpha}^{+})={\mathcal D}_{\alpha}^{-}$, $\phi_3({\mathcal D}_{\alpha}^{-})={\mathcal
  D}_{\alpha}^{+}$ for each $\alpha\in\left\{1,2\right\}$,
  \item[4.] $\phi_1:{\mathcal D}_{2}^{+}\longrightarrow{\mathcal
  D}_{2}^{-}$ and $\phi_2:{\mathcal D}_{1}^{+}\longrightarrow{\mathcal
  D}_{1}^{-}$ are isomorphisms,
  \item[5.] the tangent bundle of $M$ splits up as the direct sum
  $TM={\mathcal D}_{\alpha}^{+}\oplus{\mathcal
  D}_{\alpha}^{-}\oplus\mathbb{R}\xi={\mathcal D}_{\alpha}^{\pm}\oplus{\mathcal
  D}_{\beta}^{\pm}\oplus\mathbb{R}\xi$ for all
$\alpha,\beta\in\left\{1,2\right\}$, $\alpha\neq\beta$,
  \item[6.] $\dim({\mathcal D}_{1}^{+})=\dim({\mathcal D}_{1}^{-})=\dim({\mathcal D}_{2}^{+})=\dim({\mathcal
  D}_{2}^{-})=n$. In particular, $\phi_1$ and $\phi_2$ are almost
  paracontact structures.
\end{itemize}
\end{proposition}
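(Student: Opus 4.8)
The plan is to reduce the whole statement to elementary linear algebra on the fibres of the contact distribution $\mathcal D=\ker\eta$, using only the relations $\phi_1^2=\phi_2^2=I-\eta\otimes\xi$, $\phi_3^2=-I+\eta\otimes\xi$, $\phi_1\phi_2=\phi_3$ and the anti-commutation identities $\phi_1\phi_3=-\phi_3\phi_1=\phi_2$, $\phi_3\phi_2=-\phi_2\phi_3=\phi_1$ recorded just after the definition. The key simplification is that on $\mathcal D$ the terms $\eta\otimes\xi$ drop out, so $\phi_1|_{\mathcal D}$ and $\phi_2|_{\mathcal D}$ are involutions of $\mathcal D$ while $\phi_3|_{\mathcal D}$ is a complex structure on $\mathcal D$.

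First I would record that $\phi_\alpha\xi=0$ and $\eta\circ\phi_\alpha=0$ for each $\alpha$ (as already noted in the text), so that every $\phi_\alpha$ maps $\mathcal D$ into itself. For $\alpha\in\{1,2\}$ the pointwise endomorphisms $\frac{1}{2}(I\pm\phi_\alpha)|_{\mathcal D}$ are smooth complementary idempotents, hence of locally constant rank, so $\mathcal D_\alpha^{+}$ and $\mathcal D_\alpha^{-}$ are genuine subbundles of $\mathcal D$ and $\mathcal D=\mathcal D_\alpha^{+}\oplus\mathcal D_\alpha^{-}$; combined with $TM=\mathcal D\oplus\mathbb R\xi$ this already yields the first splitting in item 5.

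Items 1, 2 and 3 all follow from one mechanism: if $\phi_\beta X=\epsilon X$ with $X\in\mathcal D$ and $\phi_\gamma$ anti-commutes with $\phi_\beta$, then $\phi_\gamma X\in\mathcal D$ and $\phi_\beta(\phi_\gamma X)=-\epsilon\,\phi_\gamma X$, so $\phi_\gamma$ sends $\mathcal D_\beta^{\epsilon}$ into $\mathcal D_\beta^{-\epsilon}$; applying $\phi_\gamma$ a second time and using $\phi_\gamma^2=\pm I$ on $\mathcal D$ gives $\phi_\gamma^2(\mathcal D_\beta^{\epsilon})=\mathcal D_\beta^{\epsilon}$, and a short sandwiching of inclusions upgrades everything to equalities. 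Taking $\gamma\in\{1,2\}$ gives items 1 and 2, taking $\gamma=3$ gives item 3. Item 4 is then immediate: $\phi_1\colon\mathcal D_2^{+}\to\mathcal D_2^{-}$ is onto by item 1 and injective because $\phi_1 X=0$ with $X\in\mathcal D$ forces $X=\phi_1^2X=0$; symmetrically for $\phi_2$.

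For item 6 the isomorphism of item 1 gives $\dim\mathcal D_2^{+}=\dim\mathcal D_2^{-}$, and as these sum to $\dim\mathcal D=2n$ each equals $n$; likewise $\dim\mathcal D_1^{\pm}=n$. Having $\pm1$-eigendistributions of dimension $n$ (nonzero since $n\ge1$), with $\phi_\alpha|_{\mathcal D}$ an involution different from the identity, is exactly the requirement that $\phi_1,\phi_2$ induce almost paracomplex structures on $\mathcal D$, so with $\phi_\alpha^2=I-\eta\otimes\xi$ and $\eta(\xi)=1$ they are almost paracontact structures. Finally, the second splitting in item 5 reduces, via item 6 and a dimension count, to checking $\mathcal D_1^{\epsilon}\cap\mathcal D_2^{\delta}=0$: if $\phi_1X=\epsilon X$ and $\phi_2X=\delta X$ then $\phi_3X=\phi_1\phi_2X=\epsilon\delta X$ while $\phi_3^2X=-X$ on $\mathcal D$, forcing $X=0$. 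I do not expect any genuine obstacle; the only points needing care are the locally-constant-rank argument guaranteeing that the $\mathcal D_\alpha^{\pm}$ are (nonzero) subbundles, the logical order — item 6 must precede the second part of item 5 — and the sign bookkeeping in the anti-commutation computations.
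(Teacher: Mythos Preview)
Your proposal is correct and follows essentially the same route as the paper's proof: both use the anti-commutation relations to show that $\phi_\gamma$ swaps the $\pm$-eigendistributions of $\phi_\beta$, the idempotent decomposition $X=\tfrac12(X+\phi_\alpha X)+\tfrac12(X-\phi_\alpha X)$ for the first splitting in item~5, and the observation that $X\in\mathcal D_1^{\epsilon}\cap\mathcal D_2^{\delta}$ forces $\phi_3X=\epsilon\delta X$ (hence $X=0$) for the second splitting. Your write-up is slightly more streamlined---you package items 1--3 as a single mechanism and explicitly flag the constant-rank argument guaranteeing the $\mathcal D_\alpha^{\pm}$ are subbundles, a point the paper leaves implicit---but the underlying arguments are identical.
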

\begin{proof}
For any $X\in\Gamma({\mathcal D}_{2}^{+})$ one has $\phi_2\phi_1
X=-\phi_1\phi_2 X=-\phi_1 X$, so that $\phi_1({\mathcal
D}_{2}^{+})\subset{\mathcal D}_{2}^{-}$. On the other hand, let $Y$
be a vector field tangent to ${\mathcal D}_{2}^{-}$ and set
$X:=\phi_1 Y$. Then $\phi_1 X=\phi_{1}^{2}Y=Y$, so that it remains
only to prove that $X\in\Gamma({\mathcal D}_{2}^{+})$. Indeed,
$\phi_2 X=\phi_2\phi_1 Y=-\phi_1\phi_2 Y=\phi_1 Y=X$. Thus
$\phi_1({\mathcal D}_{2}^{+})={\mathcal D}_{2}^{-}$ and analogously
one can prove that $\phi_1({\mathcal D}_{2}^{-})={\mathcal
D}_{2}^{+}$. In a similar way one proves the other identities, as
well as the fourth property.  \ In order to prove the fifth property
it is enough to show that ${\mathcal D}={\mathcal
D}_{\alpha}^{+}\oplus{\mathcal D}_{\alpha}^{-}={\mathcal
D}_{\alpha}^{\pm}\oplus{\mathcal D}_{\beta}^{\pm}$ for all
$\alpha,\beta\in\left\{1,2\right\}$. Let us consider the case
$\alpha=1$. Then we can decompose every $X\in\Gamma({\mathcal D})$
as $X=\frac{1}{2}\left(X-\phi_{1}X\right)+\frac{1}{2}\left(X+\phi_1
X\right)$. An immediate computation shows that
$\frac{1}{2}\left(X-\phi_{1}X\right)\in{\mathcal D}^{-}_{1}$ and
$\frac{1}{2}\left(X+\phi_{1}X\right)\in{\mathcal D}^{+}_{1}$. Next,
if $X\in{\mathcal D}^{+}_{1}\cap{\mathcal D}^{-}_{1}$ then $\phi_1
X=X=-\phi_1 X$, from which it follows that $\phi_1^2 X = -\phi_1^2
X$, hence $X=0$. Thus ${\mathcal D}={\mathcal
D}^{+}_{1}\oplus{\mathcal D}^{-}_{1}$. In a similar way one can
prove that ${\mathcal D}={\mathcal D}^{+}_{2}\oplus{\mathcal
D}^{-}_{2}$. Now we prove the identity ${\mathcal D}={\mathcal
D}_{1}^{+}\oplus{\mathcal D}_{2}^{+}$. If $X\in{\mathcal
D}_{1}^{+}\cap{\mathcal D}_{2}^{+}$ then $\phi_1 X = X = \phi_2 X$,
hence $X=\phi_1\phi_{2}X=\phi_3 X$ and this implies that $X=0$. On
the other hand, note that from 4, since ${\mathcal D}={\mathcal
D}^{+}_{\alpha}\oplus{\mathcal D}^{-}_{\alpha}$,
$\alpha\in\left\{1,2\right\}$,  it follows that, for each
$\alpha\in\left\{1,2\right\}$, $\dim({\mathcal
D}^{+}_{\alpha})=\dim({\mathcal D}^{-}_{\alpha})=n$. Hence
$\dim({\mathcal D}_{1}^{+}+{\mathcal D}_{2}^{+})=2n$ and we conclude
that ${\mathcal D}={\mathcal D}_{1}^{+}\oplus{\mathcal D}_{2}^{+}$.
The other identities can be proven similarly.
\end{proof}

\begin{proposition}\label{proprieta2}
In any almost bi-paracontact manifold one has ${\mathcal
D}_{1}^{\pm}=\{X+\phi_3 X | X\in{\mathcal D}_{2}^{\pm}\}$ and
${\mathcal D}_{2}^{\pm}=\{X+\phi_3 X | X\in{\mathcal
D}_{1}^{\mp}\}$.
\end{proposition}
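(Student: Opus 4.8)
The plan is to establish the four identities pointwise, by a short algebraic computation that uses only the relations $\phi_1\phi_2=\phi_3=-\phi_2\phi_1$, $\phi_1\phi_3=\phi_2=-\phi_3\phi_1$, $\phi_3\phi_2=\phi_1=-\phi_2\phi_3$ recorded right after the definition, together with two facts already noted: each $\phi_\alpha$ preserves $\mathcal D=\ker\eta$, and, by Proposition~\ref{proprieta1}, each $\mathcal D_\alpha^\pm$ is an $n$-dimensional subbundle of $\mathcal D$. Since all four identities are proved in the same way, I would write out in detail only the case $\mathcal D_1^+=\{X+\phi_3 X\mid X\in\mathcal D_2^+\}$ and then indicate the sign changes needed for the remaining ones.

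For the inclusion ``$\supseteq$'' I take $X\in\mathcal D_2^+$, so that $\phi_2 X=X$ (and $X\in\mathcal D$). From $\phi_1\phi_2=\phi_3$ one gets $\phi_1 X=\phi_3 X$, and from $\phi_1\phi_3=\phi_2$ one gets $\phi_1\phi_3 X=\phi_2 X=X$; hence $\phi_1(X+\phi_3 X)=\phi_1 X+\phi_1\phi_3 X=\phi_3 X+X$. Since $\phi_3 X\in\mathcal D$ as well, $X+\phi_3 X$ is a vector in $\mathcal D$ fixed by $\phi_1$, i.e. $X+\phi_3 X\in\mathcal D_1^+$. For the inclusion ``$\subseteq$'', given $Z\in\mathcal D_1^+$ (so $\phi_1 Z=Z$ and $\eta(Z)=0$), I exhibit the explicit preimage $X:=\frac{1}{2}(Z+\phi_2 Z)$. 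First, $\phi_2^2 Z=Z-\eta(Z)\xi=Z$ gives $\phi_2 X=\frac{1}{2}(\phi_2 Z+Z)=X$, so $X\in\mathcal D_2^+$. Next, $\phi_3\phi_2 Z=\phi_1 Z=Z$ gives $\phi_3 X=\frac{1}{2}(\phi_3 Z+Z)$, while $\phi_2 Z\in\mathcal D_1^-$ by Proposition~\ref{proprieta1}(2) gives $\phi_3 Z=\phi_1\phi_2 Z=-\phi_2 Z$. Combining these, $X+\phi_3 X=\frac{1}{2}(Z+\phi_2 Z)+\frac{1}{2}(\phi_3 Z+Z)=Z+\frac{1}{2}(\phi_2 Z+\phi_3 Z)=Z$, so $Z$ has the required form.

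The remaining three identities follow in exactly the same fashion: for $\mathcal D_1^-=\{X+\phi_3 X\mid X\in\mathcal D_2^-\}$ one starts instead from $\phi_2 X=-X$ (so that $\phi_1 X=-\phi_3 X$), and for $\mathcal D_2^\pm=\{X+\phi_3 X\mid X\in\mathcal D_1^\mp\}$ one uses $\phi_2\phi_3=-\phi_1$ and $\phi_2\phi_1=-\phi_3$ in place of the relations used above. I do not anticipate any real obstacle here: the statement is a fibrewise linear-algebra identity, and the only points that require care are the signs in the commutation relations and the observation that every $\mathcal D_\alpha^\pm$ lies in $\ker\eta$, so that the correction term $\eta\otimes\xi$ in $\phi_\alpha^2=I-\eta\otimes\xi$ never actually enters the computations. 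Alternatively, one may avoid writing down the preimage by noting that $X\mapsto X+\phi_3 X$ is a vector-bundle morphism $\mathcal D_2^+\to\mathcal D_1^+$ which is fibrewise injective — from $X+\phi_3 X=0$ one deduces $\phi_3 X=-X$, whence $-X=\phi_3^2 X=X$ and $X=0$ — between bundles of equal rank $n$, and therefore surjective; but the explicit computation above is just as short.
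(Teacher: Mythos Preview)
Your proof is correct and follows essentially the same approach as the paper: both prove the two inclusions directly, using the commutation relations and an explicit preimage. The only cosmetic difference is that the paper takes $X:=\tfrac12(Z-\phi_3 Z)$ as the preimage of $Z\in\mathcal D_1^+$, while you take $X:=\tfrac12(Z+\phi_2 Z)$; since $\phi_3 Z=-\phi_2 Z$ for $Z\in\mathcal D_1^+$, these are in fact the same vector, so the two arguments coincide.
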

\begin{proof}
We show that ${\mathcal D}_{1}^{+}=\{X+\phi_3 X | X\in{\mathcal
D}_{2}^{+}\}$ by proving the two inclusions. Let $Y\in{\mathcal
D}_{1}^{+}$. We have to prove the existence of $X\in{\mathcal
D}_{2}^{+}$ such that $Y=X+\phi_{3}X$. We put
$X:=\frac{1}{2}(Y-\phi_{3}Y)$. Firstly we verify that in fact
$X\in{\mathcal D}_{2}^{+}$. We have
$\phi_{2}X=\frac{1}{2}(\phi_{2}Y-\phi_{2}\phi_{3}Y)=\frac{1}{2}(\phi_{2}Y+\phi_{1}Y)=\frac{1}{2}(\phi_{2}Y+Y)=\frac{1}{2}(Y+\phi_{2}\phi_{1}Y)=\frac{1}{2}(Y-\phi_{1}\phi_{2}Y)=X$,
hence $X\in{\mathcal D}_{2}^{+}$. Next, one can easily check that
$Y=X+\phi_{3}X$. Conversely, let $X$ be a vector field belonging to
${\mathcal D}_{2}^{+}$. Then
$\phi_{1}(X+\phi_{3}X)=\phi_{1}X+\phi_{1}\phi_{3}X=\phi_{1}X+\phi_{2}X=\phi_{3}\phi_{2}X+X=\phi_{3}X+X$,
so that $X+\phi_{3}X\in{\mathcal D}_{1}^{+}$. In a similar manner
one can prove the other equality.
\end{proof}

\begin{example}
\emph{Consider $\mathbb{R}^{2n+1}$ with global coordinates
$\{x_{1},\ldots,x_{n},y_{1},\ldots,y_{n},z\}$ and the standard
contact form $\eta=dz-\sum_{i=1}^{n}y_{i}dx_{i}$. Put, for each
$i\in\left\{1,\ldots,n\right\}$, $X_{i}:=\frac{\partial}{\partial
y_i}$ and $Y_{i}:=\frac{\partial}{\partial
x_i}+y_{i}\frac{\partial}{\partial z}$. Then the contact
distribution $\mathcal D$ is spanned by the vector fields
$X_{1},\ldots,X_{n},Y_{1},\ldots,Y_{n}$. We define three tensor
fields $\phi_1$, $\phi_2$, $\phi_3$ by setting}
\begin{align*}
\phi_{1}X_{i}&:=X_{i}, \ \ \phi_{1}Y_{i}:=-Y_i, \ \ \phi_{1}\xi:=0,\\
\phi_{2}X_{i}&:=-Y_{i}, \ \ \phi_{2}Y_{i}:=-X_i, \ \ \phi_{1}\xi:=0,
\end{align*}
\begin{align*}
\phi_{3}X_{i}&:=Y_{i}, \ \ \phi_{3}Y_{i}:=-X_i, \ \ \phi_{1}\xi:=0,
\end{align*}
\emph{for all $i\in\left\{1,\ldots,n\right\}$. Some straightforward
computations show that $(\phi_{1},\phi_{2},\phi_{3})$ defines a
bi-paracontact structure on the contact manifold
$(\mathbb{R}^{2n+1},\eta)$. In this case the canonical distributions
${\mathcal D}_{1}^{+}$, ${\mathcal D}_{1}^{-}$, ${\mathcal
D}_{2}^{+}$, ${\mathcal D}_{2}^{-}$ are given by}
\begin{align*}
{\mathcal
D}_{1}^{+}&=\emph{span}\left\{X_{1},\ldots,X_{n}\right\}, \ \ \
{\mathcal D}_{1}^{-}=\emph{span}\left\{Y_{1},\ldots,Y_{n}\right\},\\
{\mathcal
D}_{2}^{+}&=\emph{span}\left\{X_{1}-Y_{1},\ldots,X_{n}-Y_{n}\right\},
\ {\mathcal
D}_{2}^{-}=\emph{span}\left\{X_{1}+Y_{1},\ldots,X_{n}+Y_{n}\right\}.
\end{align*}
\end{example}

In order to find some more examples we prove the following
proposition.

\begin{proposition}\label{conjugate}
Let $(M,\phi,\xi,\eta,g)$ be a contact metric manifold endowed with
a Legendre distribution $L$. Then $M$ admits a canonical almost
bi-paracontact structure.
\end{proposition}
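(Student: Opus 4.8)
The plan is to produce the three tensor fields explicitly from the data $(\phi,\xi,\eta,g)$ and the Legendre distribution $L$, and then verify the defining algebraic identities of an almost bi-paracontact structure. First I would invoke the fact, recalled in Section~\ref{preliminari}, that $Q:=\phi L$ is a Legendre distribution $g$-orthogonal to $L$, so that $(L,Q)$ is an almost bi-Legendrian structure on $(M,\eta)$ with $TM=L\oplus Q\oplus\mathbb{R}\xi$. This decomposition is the natural source of a paracomplex-type operator: define $\phi_1$ by $\phi_1|_{L}=I$, $\phi_1|_{Q}=-I$, $\phi_1\xi=0$ (this is exactly the tensor $\tilde\phi$ produced by the correspondence $\Psi$ in \eqref{biunivoca}). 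Then set $\phi_3:=\phi$, the given almost contact structure, and define $\phi_2:=\phi_3\phi_1$ (equivalently $\phi_2:=-\phi_1\phi_3$; one checks these agree). The bulk of the proof is then a sequence of short computations on the three summands $L$, $Q=\phi L$, $\mathbb{R}\xi$.

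The key steps, in order, are: (1) check $\phi_1^2=I-\eta\otimes\xi$, which is immediate since $\phi_1$ is $\pm I$ on $L\oplus Q=\mathcal{D}$ and kills $\xi$; (2) check that $\phi_2:=\phi\phi_1$ satisfies $\phi_2^2=I-\eta\otimes\xi$; here I would use that $\phi$ interchanges $L$ and $Q=\phi L$ (since $\phi^2=-I$ on $\mathcal D$), so $\phi_2$ maps $L\to Q\to L$ with an appropriate sign bookkeeping, and $\phi_2^2$ acts as $+I$ on $\mathcal D$; (3) check the anti-commutation $\phi_1\phi_2=-\phi_2\phi_1$, i.e. $\phi_1(\phi\phi_1)=-(\phi\phi_1)\phi_1$: on $L$ the left side gives $\phi_1\phi(\cdot)$ while the right side gives $-\phi\phi_1(\cdot)=-\phi(\cdot)$, and since $\phi L=Q$ one has $\phi_1\phi=-\phi$ on $L$, so both sides agree; a symmetric check on $Q$ finishes it; (4) verify $\phi_1\phi_2=\phi_3$, which holds by the very definition $\phi_2=\phi\phi_1=\phi_1^{-1}\phi$ on $\mathcal D$ (note $\phi_1$ is an involution on $\mathcal D$), so $\phi_1\phi_2=\phi=\phi_3$; and (5) confirm $\phi_3=\phi$ is an almost contact structure compatible with $\eta$, which is given. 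Finally one should remark that by Proposition~\ref{proprieta1} item~6 the tensors $\phi_1,\phi_2$ are automatically almost paracontact structures, and that $\mathcal{D}_1^{+}=L$, $\mathcal{D}_1^{-}=Q=\phi L$, so the construction is genuinely canonical once $(\phi,\xi,\eta,g)$ and $L$ are fixed.

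The main obstacle I anticipate is getting the signs in the definition of $\phi_2$ consistent so that \emph{both} $\phi_2^2=I-\eta\otimes\xi$ \emph{and} the anti-commutativity $\phi_1\phi_2=-\phi_2\phi_1$ hold simultaneously; a priori $\phi\phi_1$ and $\phi_1\phi$ differ, and only one choice of sign convention (together with the correct identification of $Q$ as $\phi L$ versus $-\phi L$) yields a bona fide almost bi-paracontact structure rather than something that merely squares correctly. I would resolve this by working in a local frame adapted to $L=\mathrm{span}\{E_1,\dots,E_n\}$ and $Q=\mathrm{span}\{\phi E_1,\dots,\phi E_n\}$, writing down the matrices of $\phi_1,\phi_2,\phi_3$ in block form on $\mathcal D$, and reading off all the required identities at once. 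Alternatively, and perhaps more cleanly, I would phrase the whole construction through the correspondence $\Psi$: the almost bi-Legendrian structure $(L,\phi L)$ gives the paracontact structure $\phi_1=\tilde\phi$, and then one simply checks that $\phi,\tilde\phi$ anti-commute on $\mathcal D$ (which follows from $\phi$ swapping the two eigenbundles of $\tilde\phi$) and sets $\phi_2:=\phi\tilde\phi$.
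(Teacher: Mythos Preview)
Your approach is essentially the paper's: build the involution $\psi$ that is $+I$ on $L$, $-I$ on $Q=\phi L$, $0$ on $\xi$, and combine it with $\phi$. However, there is a genuine error in your step~(4), and it is exactly the sign trap you yourself flag as the main obstacle. You set $\phi_1=\psi$, $\phi_2=\phi\phi_1$, and then assert $\phi_2=\phi\phi_1=\phi_1^{-1}\phi$ on $\mathcal D$. Since $\phi_1$ is an involution this would say $\phi\phi_1=\phi_1\phi$, flatly contradicting the anti-commutation you correctly verify in step~(3). In fact, for $X\in L$ one computes $\phi_1\phi_2 X=\phi_1\phi\phi_1 X=\phi_1(\phi X)=-\phi X$, because $\phi X\in Q$; thus your triple satisfies $\phi_1\phi_2=-\phi_3$, not $\phi_1\phi_2=\phi_3$, and is not an almost bi-paracontact structure as defined.

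The paper resolves this by reversing your labeling: it takes $\phi_2:=\psi$ (your $\phi_1$) and $\phi_1:=\phi\psi$ (your $\phi_2$), with $\phi_3:=\phi$. Then $\phi_1\phi_2=\phi\psi\cdot\psi=\phi\psi^2=\phi$ on $\mathcal D$, and the required identity $\phi_1\phi_2=\phi_3$ is immediate, with no delicate sign tracking at all. Equivalently, you could keep your $\phi_1$ and instead define $\phi_2:=\phi_1\phi$ (rather than $\phi\phi_1$); then $\phi_1\phi_2=\phi_1^2\phi=\phi$ on $\mathcal D$. Either way, the local-frame backup you propose would of course catch and fix the slip, but the cleanest route is simply to place $\psi$ in the $\phi_2$ slot so that the product $\phi_1\phi_2$ collapses via $\psi^2=I-\eta\otimes\xi$.
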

\begin{proof}
Let $Q$ be the conjugate Legendre distribution of $L$, i.e. the
Legendre distribution on $M$ defined by $Q:=\phi(L)$ (see $\S$
\ref{preliminari}).  We define the $(1,1)$-tensor field $\psi$ on
$M$ by setting $\psi|_{L}=I$, $\psi|_{Q}=-I$, $\psi\xi=0$. Then if
we put $\phi_{1}:=\phi\psi$, $\phi_{2}:=\psi$, $\phi_{3}:=\phi$, it
is not difficult to check that $(\phi_{1},\phi_{2},\phi_{3})$ is in
fact an almost bi-paracontact structure on $(M,\eta)$.
\end{proof}

As a consequence of Proposition \ref{conjugate} we obtain a
canonical almost bi-paracontact structure on the tangent sphere
bundle $T_{1}M$ of a Riemannian manifold $(M,g)$ and on any contact
metric $(\kappa,\mu)$-space (\cite{BKP-95}). We will examine
carefully this last example in the last section of the paper.

\begin{definition}
An almost bi-paracontact structure such that ${\mathcal
D}_{1}^{\pm}$ and ${\mathcal D}_{2}^{\pm}$ are Legendre
distributions is called a \emph{Legendrian bi-paracontact
structure}. If ${\mathcal D}_{1}^{\pm}$ and ${\mathcal D}_{2}^{\pm}$
define Legendre foliations of $(M,\eta)$ then the almost
bi-paracontact structure is called \emph{integrable}.
\end{definition}

We present some characterizations of the integrability of an almost
bi-paracontact manifold.


\begin{proposition}\label{proprieta4}
An almost bi-paracontact structure $(\phi_1,\phi_2,\phi_3)$ is
Legendrian if and only if for each $\alpha\in\left\{1,2\right\}$ the
tensor field $N_{\phi_{\alpha}}^{(2)}$ vanishes identically.
Furtheremore, in any Legendrian almost bi-paracontact structure
 also the tensor field $N_{\phi_{3}}^{(2)}$ vanishes identically. In
particular, one has,  for any $X,Y\in\Gamma(TM)$,
\begin{equation}\label{legendrian}
d\eta(\phi_{1}X,\phi_{1}Y)=d\eta(\phi_{2}X,\phi_{2}Y)=-d\eta(\phi_{3}X,\phi_{3}Y)=-d\eta(X,Y)
\end{equation}
\end{proposition}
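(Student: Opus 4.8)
The plan is to unwind the definition of $N^{(2)}_{\phi_\alpha}$ in terms of $d\eta$ and relate it to the condition that ${\mathcal D}^{\pm}_\alpha$ be Legendre distributions. Recall that for each $\alpha$ one has $\eta\circ\phi_\alpha=0$, so that, using Cartan's formula, $({\mathcal L}_{\phi_\alpha X}\eta)(Y)=\phi_\alpha X\bigl(\eta(Y)\bigr)-\eta([\phi_\alpha X,Y])$ and a short computation gives $N^{(2)}_{\phi_\alpha}(X,Y)=2d\eta(\phi_\alpha X,Y)+2d\eta(X,\phi_\alpha Y)$ (the same kind of identity one uses in the classical almost contact setting). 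Hence $N^{(2)}_{\phi_\alpha}\equiv 0$ is equivalent to the tensorial condition $d\eta(\phi_\alpha X,Y)=-d\eta(X,\phi_\alpha Y)$ for all $X,Y$, i.e. $\phi_\alpha$ is skew with respect to $d\eta$.

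First I would show that ${\mathcal D}^{\pm}_\alpha$ Legendre $\iff$ $N^{(2)}_{\phi_\alpha}=0$. For the forward direction, take $X,Y\in\Gamma(TM)$ and decompose along $TM={\mathcal D}^{+}_\alpha\oplus{\mathcal D}^{-}_\alpha\oplus\mathbb{R}\xi$ (Proposition \ref{proprieta1}, item 5); since ${\mathcal D}^{\pm}_\alpha$ have dimension $n$ and are Legendre, $d\eta$ vanishes on each of them separately, so the only surviving terms of $d\eta(\phi_\alpha X,Y)+d\eta(X,\phi_\alpha Y)$ are the cross terms between ${\mathcal D}^{+}_\alpha$ and ${\mathcal D}^{-}_\alpha$, and on those $\phi_\alpha$ acts as $\pm I$ so the two summands cancel. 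For the converse, if $N^{(2)}_{\phi_\alpha}=0$ and $X,X'\in\Gamma({\mathcal D}^{+}_\alpha)$ then $d\eta(X,X')=d\eta(\phi_\alpha X,X')=-d\eta(X,\phi_\alpha X')=-d\eta(X,X')$, forcing $d\eta(X,X')=0$; likewise on ${\mathcal D}^{-}_\alpha$. Since both distributions have dimension $n$ and lie in $\ker\eta$, they are Legendre distributions. This proves the equivalence for $\alpha\in\{1,2\}$.

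Next I would deduce that $N^{(2)}_{\phi_3}=0$ as well. From $N^{(2)}_{\phi_1}=N^{(2)}_{\phi_2}=0$ we have $\phi_1$ and $\phi_2$ both skew for $d\eta$; since $\phi_3=\phi_1\phi_2$, for any $X,Y$ we compute $d\eta(\phi_3 X,Y)=d\eta(\phi_1\phi_2 X,Y)=-d\eta(\phi_2 X,\phi_1 Y)=d\eta(X,\phi_2\phi_1 Y)=-d\eta(X,\phi_1\phi_2 Y)=-d\eta(X,\phi_3 Y)$, using $\phi_2\phi_1=-\phi_1\phi_2=-\phi_3$. Hence $\phi_3$ is skew for $d\eta$, which is exactly $N^{(2)}_{\phi_3}=0$. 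Equivalently, ${\mathcal D}^{\pm}_3$ are Legendre distributions, consistent with $\phi_3$ being an almost contact structure compatible with $\eta$.

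Finally, for the displayed identity \eqref{legendrian}: skewness of $\phi_\alpha$ for $d\eta$ gives $d\eta(\phi_\alpha X,\phi_\alpha Y)=-d\eta(X,\phi_\alpha^2 Y)$. For $\alpha\in\{1,2\}$, $\phi_\alpha^2=I-\eta\otimes\xi$ and $i_\xi d\eta=0$, so $d\eta(X,\phi_\alpha^2 Y)=d\eta(X,Y)$, whence $d\eta(\phi_\alpha X,\phi_\alpha Y)=-d\eta(X,Y)$; for $\alpha=3$, $\phi_3^2=-I+\eta\otimes\xi$ gives $d\eta(\phi_3 X,\phi_3 Y)=d\eta(X,Y)=-\bigl(-d\eta(X,Y)\bigr)$, matching the sign in \eqref{legendrian}. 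The main obstacle I anticipate is purely bookkeeping: getting the identity $N^{(2)}_{\phi_\alpha}(X,Y)=2\bigl(d\eta(\phi_\alpha X,Y)+d\eta(X,\phi_\alpha Y)\bigr)$ with the correct normalization from $(\ref{n2})$ (resp. $(\ref{n2p})$) and $\eta\circ\phi_\alpha=0$, together with keeping track of the sign conventions in $2d\eta(X,Y)=X\eta(Y)-Y\eta(X)-\eta([X,Y])$; once this identity is in hand, the rest is linear algebra on the eigenbundle decomposition.
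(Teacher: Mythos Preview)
Your proposal is correct and follows essentially the same route as the paper: both reduce the statement to the identity $N^{(2)}_{\phi_\alpha}(X,Y)=2\bigl(d\eta(\phi_\alpha X,Y)+d\eta(X,\phi_\alpha Y)\bigr)$ (which you state globally, while the paper verifies it case by case on the eigenbundle decomposition), then use skewness of $\phi_1,\phi_2$ for $d\eta$ to deduce skewness of $\phi_3=\phi_1\phi_2$ and the identity \eqref{legendrian}. One small caveat: your aside that ``${\mathcal D}^{\pm}_3$ are Legendre distributions'' is not quite meaningful, since $\phi_3$ is almost contact with eigenvalues $\pm i$ on the complexified bundle rather than $\pm 1$; this remark is inessential and can simply be dropped.
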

\begin{proof}
First of all we have, for all $X\in\Gamma({\mathcal D})$,
$N^{(2)}_{\phi_{\alpha}}(\xi,X)=-\eta([\xi,\phi_{\alpha}X])=2d\eta(\xi,\phi_{\alpha}X)=0$
by \eqref{contatto4}. Next, in order to prove that
$N^{(2)}_{\phi_{\alpha}}$ vanishes on $\mathcal D$, we distinguish
the cases $X,Y\in\Gamma({\mathcal D}_{\alpha}^{+})$,
$X,Y\in\Gamma({\mathcal D}_{\alpha}^{-})$ and $X\in\Gamma({\mathcal
D}_{\alpha}^{\pm})$, $Y\in\Gamma({\mathcal D}_{\alpha}^{\mp})$. In
the first case we have $N^{(2)}_{\phi_{\alpha}}(X,Y)$ $=$
$\phi_{\alpha}X(\eta(Y))-\eta([\phi_{\alpha}X,Y])-\phi_{\alpha}Y(\eta(X))+\eta([\phi_{\alpha}Y,X])$
$=$ $2d\eta(\phi_{\alpha}X,Y)+2d\eta(X,\phi_{\alpha}Y)$ $=$
$4d\eta(X,Y)=0$, where the last equality is due to the fact that
${\mathcal D}_{\alpha}^{+}$ is a Legendre distribution. The case
$X,Y\in\Gamma({\mathcal D}_{\alpha}^{-})$ is similar. Next, for any
$X\in\Gamma({\mathcal D}_{\alpha}^{\pm})$, $Y\in\Gamma({\mathcal
D}_{\alpha}^{\mp})$, we have
$N^{(2)}_{\phi_{\alpha}}(X,Y)=-\eta([\phi_{\alpha}X,Y])+\eta([\phi_{\alpha}Y,X])=\mp\eta([X,Y])\pm\eta([X,Y])=0$.
Conversely, if $N^{(2)}_{\alpha}\equiv 0$ then, for any
$X,Y\in\Gamma({\mathcal D}_{\alpha}^{+})$,
$0=N^{(2)}_{\alpha}(X,Y)=2d\eta(\phi_{\alpha}X,Y)+2\eta(X,\phi_{\alpha}Y)=4d\eta(X,Y)$,
so that $d\eta(X,Y)=0$. Consequently, as, by Proposition
\ref{proprieta1}, ${\mathcal D}_{\alpha}^{+}$ is $n$-dimensional, it
is a Legendre distribution. In a similar way one can prove that also
${\mathcal D}_{\alpha}^{-}$ is a Legendre distribution. \ In order
to prove the second part of the proposition, notice that since
$N^{(2)}_{\phi_1}$ and $N^{(2)}_{\phi_2}$ vanish,  for each
$\alpha\in\left\{1,2\right\}$,
$d\eta(\phi_{\alpha}\cdot,\cdot)=-d\eta(\cdot,\phi_{\alpha}\cdot)$.
Now, for any $X,Y\in\Gamma(TM)$,
$d\eta(\phi_{3}X,Y)=d\eta(\phi_{1}\phi_{2}X,Y)=-d\eta(\phi_{2}X,\phi_{1}Y)=d\eta(X,\phi_{2}\phi_{1}Y)=-d\eta(X,\phi_{3}Y)$.
Hence,
$N_{\phi_{3}}^{(2)}(X,Y)=\phi_{3}X(\eta(Y))-\eta([\phi_{3}X,Y])-\phi_{3}Y(\eta(X))+\eta([\phi_{3}Y,X])=2d\eta(\phi_{3}X,Y)-2d\eta(\phi_{3}Y,X)=0$.
\end{proof}

\begin{proposition}\label{proprieta3}
An almost bi-paracontact structure $(\phi_1,\phi_2,\phi_3)$ is
Legendrian (respectively, integrable) if and only if, for each
$\alpha\in\left\{1,2\right\}$,
$N^{(1)}_{\phi_\alpha}(X,X')\in\Gamma({\mathcal D}^{\mp}_{\alpha})$
(respectively, $N^{(1)}_{\phi_\alpha}(X,X')=0$) for any
$X,X'\in\Gamma({\mathcal D}^{\pm}_{\alpha})$.
\end{proposition}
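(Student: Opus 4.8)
The plan is to compute $N^{(1)}_{\phi_\alpha}$ explicitly on pairs of sections of a single eigendistribution and then read off its components with respect to the splitting $TM={\mathcal D}^{+}_{\alpha}\oplus{\mathcal D}^{-}_{\alpha}\oplus\mathbb{R}\xi$ provided by Proposition \ref{proprieta1}. Fix $\alpha\in\{1,2\}$ and $X,X'\in\Gamma({\mathcal D}^{+}_{\alpha})$, so that $\phi_{\alpha}X=X$ and $\phi_{\alpha}X'=X'$. Writing $[X,X']=[X,X']_{{\mathcal D}^{+}_{\alpha}}+[X,X']_{{\mathcal D}^{-}_{\alpha}}+\eta([X,X'])\xi$ and inserting this into the defining formula of the Nijenhuis tensor, using $\phi_{\alpha}^{2}=I-\eta\otimes\xi$ and $\phi_{\alpha}\xi=0$, I expect to get
\[
[\phi_{\alpha},\phi_{\alpha}](X,X')=4\,[X,X']_{{\mathcal D}^{-}_{\alpha}}+\eta([X,X'])\xi .
\]
Since $\eta(X)=\eta(X')=0$ one has $2d\eta(X,X')=-\eta([X,X'])$, so by \eqref{n1p} it follows that $N^{(1)}_{\phi_\alpha}(X,X')=4\,[X,X']_{{\mathcal D}^{-}_{\alpha}}+2\eta([X,X'])\xi$: the ${\mathcal D}^{+}_{\alpha}$-component always vanishes, the ${\mathcal D}^{-}_{\alpha}$-component is $4[X,X']_{{\mathcal D}^{-}_{\alpha}}$ and the $\xi$-component is $2\eta([X,X'])\xi$. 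The identical computation with $X,X'\in\Gamma({\mathcal D}^{-}_{\alpha})$, where now $\phi_{\alpha}X=-X$, yields $N^{(1)}_{\phi_\alpha}(X,X')=4\,[X,X']_{{\mathcal D}^{+}_{\alpha}}+2\eta([X,X'])\xi$.

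From these two formulas both equivalences should follow at once. For the ``Legendrian'' statement: $N^{(1)}_{\phi_\alpha}(X,X')\in\Gamma({\mathcal D}^{\mp}_{\alpha})$ for all $X,X'\in\Gamma({\mathcal D}^{\pm}_{\alpha})$ is equivalent to $\eta([X,X'])=0$, i.e.\ to $d\eta(X,X')=0$, for all such $X,X'$; since ${\mathcal D}^{\pm}_{\alpha}$ is $n$-dimensional by Proposition \ref{proprieta1}, this says precisely that ${\mathcal D}^{\pm}_{\alpha}$ is a Legendre distribution. Letting $\alpha$ run over $\{1,2\}$ we obtain that all four distributions ${\mathcal D}^{\pm}_{1},{\mathcal D}^{\pm}_{2}$ are Legendrian, i.e.\ the structure is Legendrian, and conversely. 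For the ``integrable'' statement: $N^{(1)}_{\phi_\alpha}(X,X')=0$ for all $X,X'\in\Gamma({\mathcal D}^{+}_{\alpha})$ forces simultaneously $[X,X']_{{\mathcal D}^{-}_{\alpha}}=0$ and $\eta([X,X'])=0$, hence $[X,X']\in\Gamma({\mathcal D}^{+}_{\alpha})$; thus ${\mathcal D}^{+}_{\alpha}$ is at once a Legendre distribution and involutive, that is, a Legendre foliation. The same argument for ${\mathcal D}^{-}_{\alpha}$ and for $\alpha=2$ produces all four Legendre foliations, which is by definition the integrability of the structure; the converse is clear, since involutivity of ${\mathcal D}^{\pm}_{\alpha}$ makes $[X,X']$ lie in ${\mathcal D}^{\pm}_{\alpha}$ and therefore kills both the ${\mathcal D}^{\mp}_{\alpha}$- and the $\xi$-component of $N^{(1)}_{\phi_\alpha}(X,X')$.

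There is no genuine obstacle here beyond careful bookkeeping with the eigendistribution decomposition; the only point deserving attention is the $\xi$-component, where the term $\eta([X,X'])\xi$ coming from $\phi_{\alpha}^{2}=I-\eta\otimes\xi$ and the term $-2d\eta(X,X')\xi$ from the definition of $N^{(1)}_{\phi_\alpha}$ \emph{add up} rather than cancel, which is exactly why the vanishing of that component is equivalent to the Legendre condition $d\eta(X,X')=0$. As an alternative one could deduce the ``Legendrian'' half from Proposition \ref{proprieta4} via $N^{(2)}_{\phi_\alpha}$, but the direct computation above has the advantage of handling both halves of the statement uniformly.
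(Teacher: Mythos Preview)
Your proof is correct and follows essentially the same approach as the paper: both compute $N^{(1)}_{\phi_\alpha}$ on pairs of sections of a single eigendistribution, obtaining $N^{(1)}_{\phi_\alpha}(X,X')=2[X,X']-2\phi_{\alpha}[X,X']$, and then read off the Legendrian and integrability conditions from the components of $[X,X']$ with respect to the splitting ${\mathcal D}^{+}_{\alpha}\oplus{\mathcal D}^{-}_{\alpha}\oplus\mathbb{R}\xi$. The only cosmetic difference is that the paper applies $\phi_{\alpha}$ to the expression $2[X,X']-2\phi_{\alpha}[X,X']$ to isolate the $\xi$-component, whereas you decompose $[X,X']$ explicitly from the outset; your presentation is arguably more transparent.
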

\begin{proof}
By \eqref{n1p} we have, for any $X,X'\in\Gamma({\mathcal
D}_{\alpha}^{+})$,
\begin{equation}\label{legendre1bis}
N^{(1)}_{\phi_\alpha}(X,X')=[X,X]+[X,X]-\phi_\alpha[X,X']-\phi_\alpha[X,X']=2[X,X']-2\phi_\alpha[X,X'].
\end{equation}
Hence, applying $\phi_\alpha$ one obtains
\begin{equation}\label{legendre1}
\phi_{\alpha}{N^{(1)}_{\phi_\alpha}(X,X')}=2\phi_{\alpha}[X,X']-2[X,X']+2\eta([X,X'])\xi=-N^{(1)}_{\phi_\alpha}(X,X')-4d\eta(X,X')\xi
\end{equation}
Then  \eqref{legendre1} implies that $d\eta(X,X')=0$ if and only if
$N^{(1)}_{\phi_\alpha}(X,X')\in\Gamma({\mathcal D}_{\alpha}^{-})$
and  \eqref{legendre1bis} that ${\mathcal D}_{\alpha}^{+}$ is
involutive if and only if $N^{(1)}_{\phi_\alpha}(X,X')=0$. Analogous
arguments work for ${\mathcal D}^{-}_{\alpha}$.
\end{proof}

\begin{corollary}\label{proprieta7}
An almost bi-paracontact structure $(\phi_1,\phi_2,\phi_3)$ is
integrable if and only if the tensor fields $N^{(1)}_{\phi_1}$,
$N^{(1)}_{\phi_2}$ vanish on the contact distribution $\mathcal D$.
Furthermore, in an integrable almost bi-paracontact manifold also
the tensor field  $N^{(1)}_{\phi_3}$  vanishes on $\mathcal D$.
\end{corollary}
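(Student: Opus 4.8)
The plan is to leverage Proposition~\ref{proprieta3}, which already characterizes integrability of $(\phi_1,\phi_2,\phi_3)$ in terms of $N^{(1)}_{\phi_\alpha}(X,X')$ vanishing for $X,X'\in\Gamma({\mathcal D}^{\pm}_\alpha)$, and upgrade it to a statement about $N^{(1)}_{\phi_\alpha}$ vanishing on all of ${\mathcal D}$. First I would observe that, by Proposition~\ref{proprieta1}(5), ${\mathcal D}={\mathcal D}^{+}_\alpha\oplus{\mathcal D}^{-}_\alpha$, so a general pair of sections of ${\mathcal D}$ decomposes into components in ${\mathcal D}^{+}_\alpha$ and ${\mathcal D}^{-}_\alpha$; since $N^{(1)}_{\phi_\alpha}$ is a tensor, it suffices to check it vanishes on the three types of pairs: $X,X'\in\Gamma({\mathcal D}^{+}_\alpha)$; $X,X'\in\Gamma({\mathcal D}^{-}_\alpha)$; and the mixed case $X\in\Gamma({\mathcal D}^{+}_\alpha)$, $X'\in\Gamma({\mathcal D}^{-}_\alpha)$. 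The first two cases are exactly covered by Proposition~\ref{proprieta3} (integrability $\Leftrightarrow N^{(1)}_{\phi_\alpha}=0$ on each eigendistribution), so the real content is the mixed case.

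For the mixed case I would compute $N^{(1)}_{\phi_\alpha}(X,X')$ directly from \eqref{n1p} using $\phi_\alpha X=X$, $\phi_\alpha X'=-X'$, and $\phi_\alpha^2=I-\eta\otimes\xi$ together with $\eta(X)=\eta(X')=0$. This gives
\begin{equation*}
N^{(1)}_{\phi_\alpha}(X,X')=\phi_\alpha^2[X,X']+[X,-X']-\phi_\alpha[X,X']-\phi_\alpha[X,-X']=[X,X']-\eta([X,X'])\xi-2\phi_\alpha[X,X'],
\end{equation*}
wait --- more carefully, $[\phi_\alpha X,\phi_\alpha X']=[X,-X']=-[X,X']$, $\phi_\alpha[\phi_\alpha X,X']=\phi_\alpha[X,X']$, $\phi_\alpha[X,\phi_\alpha X']=\phi_\alpha[X,-X']=-\phi_\alpha[X,X']$, so the $\phi_\alpha$-bracket terms cancel and one is left with $N^{(1)}_{\phi_\alpha}(X,X')=\phi_\alpha^2[X,X']-[X,X']-2d\eta(X,X')\xi=-\eta([X,X'])\xi-2d\eta(X,X')\xi$. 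Since $X,X'\in\Gamma({\mathcal D})$ one has $\eta([X,X'])=-2d\eta(X,X')$, hence $N^{(1)}_{\phi_\alpha}(X,X')=2d\eta(X,X')\xi-2d\eta(X,X')\xi=0$. Thus the mixed case vanishes automatically, with no integrability hypothesis needed, and the equivalence in the corollary reduces precisely to the two eigendistribution cases handled by Proposition~\ref{proprieta3}.

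For the last assertion about $N^{(1)}_{\phi_3}$, I would use the algebraic identities $\phi_3=\phi_1\phi_2$, $\phi_1\phi_3=-\phi_3\phi_1=\phi_2$, $\phi_3\phi_2=-\phi_2\phi_3=\phi_1$ to express brackets of sections of ${\mathcal D}^{\pm}_3$ in terms of the other structures; by Proposition~\ref{proprieta2} the eigendistributions of $\phi_3$ are built from those of $\phi_1$ and $\phi_2$ via $X\mapsto X+\phi_3 X$, so sections of ${\mathcal D}^{\pm}_3$ can be written using sections of ${\mathcal D}^{\pm}_1$, whose brackets are controlled once $N^{(1)}_{\phi_1}$ vanishes on ${\mathcal D}$. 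Alternatively, and more cleanly, I expect one can invoke the already-proven integrability (both ${\mathcal D}^{\pm}_3$ are Legendre foliations, being obtained from the involutive ${\mathcal D}^{\pm}_1$, ${\mathcal D}^{\pm}_2$ via Proposition~\ref{proprieta2}) and then repeat the mixed-case computation above with $\alpha=3$: for $X,X'\in\Gamma({\mathcal D}^{+}_3)$ or $\Gamma({\mathcal D}^{-}_3)$ involutivity forces $N^{(1)}_{\phi_3}=0$ as in \eqref{legendre1bis}, while the mixed case vanishes by the same $d\eta$-cancellation since ${\mathcal D}={\mathcal D}^{+}_3\oplus{\mathcal D}^{-}_3$ as well. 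The main obstacle is bookkeeping: making sure the eigendistribution decomposition and the sign conventions in $N^{(1)}_{\phi_\alpha}$ (note the $-2d\eta(X,Y)\xi$ in the paracontact convention \eqref{n1p} versus $+2d\eta(X,Y)\xi$ in \eqref{n1}) are applied consistently, and confirming that ${\mathcal D}^{\pm}_3$ being involutive genuinely follows from Proposition~\ref{proprieta2} and the involutivity of ${\mathcal D}^{\pm}_1$, ${\mathcal D}^{\pm}_2$ — which it does, since $\phi_3$ maps each ${\mathcal D}^{\pm}_1$ isomorphically and the summand description is explicit.
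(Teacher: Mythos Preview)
Your treatment of the first assertion is correct and matches the paper's argument: the mixed case $X\in\Gamma({\mathcal D}^{+}_{\alpha})$, $X'\in\Gamma({\mathcal D}^{-}_{\alpha})$ yields $N^{(1)}_{\phi_\alpha}(X,X')=0$ automatically, and the remaining cases are exactly Proposition~\ref{proprieta3}.

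The gap is in your handling of $N^{(1)}_{\phi_3}$. You repeatedly invoke ${\mathcal D}^{\pm}_{3}$ as if they were real Legendre foliations obtained from Proposition~\ref{proprieta2}, but neither claim holds. First, $\phi_{3}$ is an almost \emph{contact} structure, $\phi_{3}^{2}=-I+\eta\otimes\xi$, so its only eigenvalues on ${\mathcal D}$ are $\pm i$; the eigendistributions ${\mathcal D}^{\pm}_{3}$ live in the complexified tangent bundle, and there is no real splitting ${\mathcal D}={\mathcal D}^{+}_{3}\oplus{\mathcal D}^{-}_{3}$ to which \eqref{legendre1bis} or the mixed-case $d\eta$-cancellation (with the sign convention of \eqref{n1p}) could be applied. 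Second, Proposition~\ref{proprieta2} relates ${\mathcal D}^{\pm}_{1}$ and ${\mathcal D}^{\pm}_{2}$ to each other via $X\mapsto X+\phi_{3}X$; it says nothing about ${\mathcal D}^{\pm}_{3}$. So the sentence ``both ${\mathcal D}^{\pm}_{3}$ are Legendre foliations, being obtained from the involutive ${\mathcal D}^{\pm}_{1}$, ${\mathcal D}^{\pm}_{2}$ via Proposition~\ref{proprieta2}'' is simply false.

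The paper avoids this by never decomposing along $\phi_{3}$. It decomposes ${\mathcal D}$ along the \emph{real} splitting ${\mathcal D}={\mathcal D}^{+}_{1}\oplus{\mathcal D}^{-}_{1}$ and evaluates $N^{(1)}_{\phi_{3}}$ on pairs from these pieces. For $X,X'\in\Gamma({\mathcal D}^{+}_{1})$ one uses $\phi_{3}X=\phi_{3}\phi_{1}X=\phi_{2}X\in\Gamma({\mathcal D}^{-}_{1})$ and the integrability of ${\mathcal D}^{\pm}_{1}$ together with $\phi_{1}N^{(1)}_{\phi_{2}}(X,X')=0$ to obtain $N^{(1)}_{\phi_{3}}(X,X')=0$; for the mixed case $X\in\Gamma({\mathcal D}^{+}_{1})$, $Y\in\Gamma({\mathcal D}^{-}_{1})$ one applies \eqref{lemma2} and Proposition~\ref{proprieta4} (noting $\phi_{3}{\mathcal D}^{\pm}_{1}={\mathcal D}^{\mp}_{1}$) to conclude $\phi_{3}N^{(1)}_{\phi_{3}}(X,Y)=0$ and $\eta(N^{(1)}_{\phi_{3}}(X,Y))=0$. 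This is the missing idea: you must work with the real eigendistributions of $\phi_{1}$ (or $\phi_{2}$), not with those of $\phi_{3}$.
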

\begin{proof}
The proof is trivial in one direction. Conversely, notice that, for
any $X\in\Gamma({\mathcal D}_{\alpha}^{+})$, $Y\in\Gamma({\mathcal
D}_{\alpha}^{-})$,
$N^{(1)}_{\phi_{\alpha}}(X,Y)=[X,Y]+[X,-X]-\phi_{\alpha}[X,Y]-\phi_{\alpha}[X,-Y]=0$.
Then by Proposition \ref{proprieta3} we have that $N^{(1)}_{\phi_1}$
and $N^{(1)}_{\phi_2}$ vanish on $\mathcal D$. Now for ending the
proof it remains to demonstrate that if $N^{(1)}_{\phi_1}$ and
$N^{(1)}_{\phi_2}$ vanish on $\mathcal D$ then also
$N^{(1)}_{\phi_3}$ vanishes on $\mathcal D$. Let $X$, $X'$ be
sections of ${\mathcal D}_{1}^{+}$. By Proposition \ref{proprieta1},
$\phi_{2}X$ and $\phi_{2}X'$ are sections of ${\mathcal D}_{1}^{-}$.
Then the integrability of ${\mathcal D}_{1}^{+}$ and ${\mathcal
D}_{1}^{-}$ yield
\begin{align}
0&=\phi_{1}N^{(1)}_{\phi_2}(X,X')\nonumber\\
&=\phi_{1}[X,X']+\phi_{1}[\phi_{2}X,\phi_{2}X']-\phi_{3}[\phi_{2}X,X']-\phi_{3}[X,\phi_{2}X'] \label{intermedio5}\\
&=[X,X']-[\phi_{2}X,\phi_{2}X']-\phi_{3}[\phi_{2}X,X']-\phi_{3}[X,\phi_{2}X'].\nonumber
\end{align}
Using \eqref{intermedio5} we have that
\begin{align*}
N^{(1)}_{\phi_3}(X,X')&=-[X,X']+[\phi_{3}\phi_{1}X,\phi_{3}\phi_{1}X']-\phi_{3}[\phi_{3}\phi_{1}X,X']-\phi_{3}[X,\phi_{3}\phi_{1}X']\\
&=-[X,X']+[\phi_{2}X,\phi_{2}X']+\phi_{3}[\phi_{2}X,X']+\phi_{3}[X,\phi_{2}X']=0.
\end{align*}
Arguing in the same way one can prove that
$N^{(1)}_{\phi_{3}}(Y,Y')=0$ for all $Y,Y'\in\Gamma({\mathcal
D}_{1}^{-})$. Next, for any $X\in\Gamma({\mathcal D}_{1}^{+})$ and
$X\in\Gamma({\mathcal D}_{1}^{-})$, by \eqref{lemma2} we get
\begin{equation*}
\phi_{3}N^{(1)}_{\phi_3}(X,Y)=-N^{(1)}_{\phi_3}(\phi_{3}X,Y)+2\left(d\eta(\phi_{3}X,Y)+d\eta(X,\phi_{3}Y)\right)\xi=0,
\end{equation*}
because $\phi_{3}{\mathcal D}_{1}^{\pm}={\mathcal D}^{\mp}$ and by
\eqref{legendrian}. On the other hand, since the almost
bi-paracontact structure $(\phi_{1},\phi_{2},\phi_{3})$ is
integrable, in particular Legendrian,
$\eta(N^{(1)}_{\phi_3}(X,Y))=-\eta([X,Y])+\eta([\phi_{3}X,\phi_{3}Y])=N^{(2)}_{\phi_{3}}(X,\phi_{3}Y)=0$
by Proposition \ref{proprieta4}. Therefore, as ${\mathcal
D}={\mathcal D}_{1}^{+}\oplus{\mathcal D}_{1}^{-}$, we conclude that
$N^{(1)}_{\phi_3}(Z,Z')=0$ for any $Z,Z'\in\Gamma({\mathcal D})$.
\end{proof}

A notion stronger than integrability is that of ``normal almost
bi-paracontact structure''.

\begin{definition}
Let $(M,\eta,\phi_1,\phi_2,\phi_3)$ be an almost bi-paracontact
manifold. If the almost paracontact structures
$(\phi_{1},\xi,\eta)$, $(\phi_{2},\xi,\eta)$ and the almost contact
structure $(\phi_{3},\xi,\eta)$ are normal, i.e.
$N^{(1)}_{\phi_\alpha}=0$ for each $\alpha\in\left\{1,2,3\right\}$,
$(\phi_1,\phi_2,\phi_3)$ is called a \emph{normal almost
bi-paracontact structure}.
\end{definition}

By arguing as in Corollary \ref{proprieta7} one can prove that if
$N^{(1)}_{\phi_1}$ and $N^{(1)}_{\phi_2}$ vanish identically, then
also $N^{(1)}_{\phi_3}=0$ and the almost bi-paracontact structure is
normal. Moreover, since, for each $\alpha\in\left\{1,2\right\}$ and
any $X\in\Gamma({\mathcal D})$,
$N^{(1)}_{\phi_\alpha}(\xi,X)=N^{(3)}_{\phi_\alpha}(\phi_{\alpha}X)$,
using Corollary \ref{proprieta7} one can prove the following
proposition.

\begin{proposition}\label{normality}
An almost bi-paracontact structure is normal if and only if it is
integrable and $N^{(3)}_{\phi_{1}}$ and $N^{(3)}_{\phi_{2}}=0$
vanish identically.
\end{proposition}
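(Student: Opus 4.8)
The whole argument hinges on the identity $N^{(1)}_{\phi_\alpha}(\xi,X)=N^{(3)}_{\phi_\alpha}(\phi_\alpha X)$, valid for $\alpha\in\{1,2\}$ and $X\in\Gamma({\mathcal D})$, which I would establish first. Expanding $N^{(1)}_{\phi_\alpha}(\xi,X)$ through \eqref{n1p}, the relation $\phi_\alpha\xi=0$ annihilates two of the four Nijenhuis terms and $i_\xi d\eta=0$ kills the $d\eta$-term, leaving $N^{(1)}_{\phi_\alpha}(\xi,X)=\phi_\alpha^2[\xi,X]-\phi_\alpha[\xi,\phi_\alpha X]$. On a contact manifold, Cartan's formula gives ${\mathcal L}_\xi\eta=d(i_\xi\eta)+i_\xi d\eta=0$, hence $\eta([\xi,X])=0$ whenever $X\in\Gamma({\mathcal D})$; thus $[\xi,X]\in\Gamma({\mathcal D})$, so $\phi_\alpha^2[\xi,X]=[\xi,X]$ and likewise $\phi_\alpha^2 X=X$. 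Comparing with $N^{(3)}_{\phi_\alpha}(\phi_\alpha X)=[\xi,\phi_\alpha^2 X]-\phi_\alpha[\xi,\phi_\alpha X]=[\xi,X]-\phi_\alpha[\xi,\phi_\alpha X]$ yields the identity.

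The forward implication is then immediate. If $(\phi_1,\phi_2,\phi_3)$ is normal, then $N^{(1)}_{\phi_1}$ and $N^{(1)}_{\phi_2}$ vanish, in particular on ${\mathcal D}$, so by Corollary \ref{proprieta7} the structure is integrable. Moreover the identity gives $N^{(3)}_{\phi_\alpha}(\phi_\alpha X)=N^{(1)}_{\phi_\alpha}(\xi,X)=0$ for every $X\in\Gamma({\mathcal D})$, and since $\phi_\alpha^2|_{\mathcal D}=I$ the endomorphism $\phi_\alpha$ maps ${\mathcal D}$ onto itself, so $N^{(3)}_{\phi_\alpha}$ vanishes on ${\mathcal D}$; together with $N^{(3)}_{\phi_\alpha}(\xi)=[\xi,\phi_\alpha\xi]-\phi_\alpha[\xi,\xi]=0$ this gives $N^{(3)}_{\phi_1}=N^{(3)}_{\phi_2}=0$ (alternatively one may simply quote that for an almost paracontact structure the vanishing of $N^{(1)}$ forces that of the remaining tensors).

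For the converse, suppose the structure is integrable with $N^{(3)}_{\phi_1}=N^{(3)}_{\phi_2}=0$. By Corollary \ref{proprieta7}, integrability gives $N^{(1)}_{\phi_1}=N^{(1)}_{\phi_2}=0$ on ${\mathcal D}$; the identity then forces $N^{(1)}_{\phi_\alpha}(\xi,X)=N^{(3)}_{\phi_\alpha}(\phi_\alpha X)=0$ for all $X\in\Gamma({\mathcal D})$, while $N^{(1)}_{\phi_\alpha}(\xi,\xi)=0$ by skew-symmetry; since $N^{(1)}_{\phi_\alpha}$ is tensorial and $TM={\mathcal D}\oplus\mathbb{R}\xi$, we obtain $N^{(1)}_{\phi_1}=N^{(1)}_{\phi_2}=0$ identically, and by the observation made just before the statement (argue as in Corollary \ref{proprieta7}) this forces $N^{(1)}_{\phi_3}=0$, i.e. normality. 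The only genuinely computational point is the identity in the first paragraph, and the subtlety there is precisely the remark that on a contact manifold $[\xi,\cdot]$ preserves ${\mathcal D}$ (equivalently $N^{(4)}\equiv 0$), which is what makes the $\phi_\alpha^2$ terms collapse; everything else is bookkeeping with results already in the excerpt.
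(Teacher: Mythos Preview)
Your proof is correct and follows exactly the approach sketched in the paper: the paper's own ``proof'' is the sentence immediately preceding the proposition, which records the key identity $N^{(1)}_{\phi_\alpha}(\xi,X)=N^{(3)}_{\phi_\alpha}(\phi_\alpha X)$ and appeals to Corollary~\ref{proprieta7}, and you have simply written out the details of that argument, including a careful verification of the identity.
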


As a consequence we are able to give a geometrical interpretation to
 normality  in terms of Legendre foliations.

\begin{corollary}
An almost bi-paracontact structure is normal if and only, for each
$\alpha\in\left\{1,2\right\}$, both ${\mathcal D}_{\alpha}^{+}$ and
${\mathcal D}_{\alpha}^{-}$ are flat Legendre foliations.
\end{corollary}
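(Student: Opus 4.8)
The plan is to reduce the statement, by means of Proposition \ref{normality}, to an analysis of the two tensor fields $N^{(3)}_{\phi_1}$ and $N^{(3)}_{\phi_2}$ under the standing hypothesis of integrability, and then to identify their vanishing with the condition that $\xi$ be foliate with respect to each of the four eigendistributions. Two elementary remarks are needed first. Since $i_{\xi}\eta=1$ and $i_{\xi}d\eta=0$ one has ${\mathcal L}_{\xi}\eta=0$, hence $\eta([\xi,X])=\xi(\eta(X))=0$ for every $X\in\Gamma({\mathcal D})$, so that $[\xi,X]\in\Gamma({\mathcal D})$ whenever $X\in\Gamma({\mathcal D})$. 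Moreover, for $\alpha\in\left\{1,2\right\}$, $N^{(3)}_{\phi_\alpha}(\xi)=({\mathcal L}_{\xi}\phi_\alpha)\xi=[\xi,\phi_\alpha\xi]-\phi_\alpha[\xi,\xi]=0$, so $N^{(3)}_{\phi_\alpha}$ is completely determined by its restriction to $\mathcal D$.

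Assume now the almost bi-paracontact structure is integrable, fix $\alpha\in\left\{1,2\right\}$, and take $X\in\Gamma({\mathcal D}_{\alpha}^{+})$. By the first remark we may write $[\xi,X]=Z^{+}+Z^{-}$ with $Z^{\pm}\in\Gamma({\mathcal D}_{\alpha}^{\pm})$, and then, using $\phi_\alpha X=X$ together with $\phi_\alpha|_{{\mathcal D}_{\alpha}^{+}}=I$, $\phi_\alpha|_{{\mathcal D}_{\alpha}^{-}}=-I$, we get $N^{(3)}_{\phi_\alpha}(X)=[\xi,\phi_\alpha X]-\phi_\alpha[\xi,X]=[\xi,X]-\phi_\alpha[\xi,X]=2Z^{-}$. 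Symmetrically, for $X\in\Gamma({\mathcal D}_{\alpha}^{-})$ one finds $N^{(3)}_{\phi_\alpha}(X)=-2Z^{+}$, where $Z^{+}$ is the ${\mathcal D}_{\alpha}^{+}$-component of $[\xi,X]$. Consequently $N^{(3)}_{\phi_\alpha}$ vanishes identically if and only if $[\xi,X]\in\Gamma({\mathcal D}_{\alpha}^{+})$ for all $X\in\Gamma({\mathcal D}_{\alpha}^{+})$ and $[\xi,X]\in\Gamma({\mathcal D}_{\alpha}^{-})$ for all $X\in\Gamma({\mathcal D}_{\alpha}^{-})$, i.e. if and only if $\xi$ is foliate with respect to both ${\mathcal D}_{\alpha}^{+}$ and ${\mathcal D}_{\alpha}^{-}$; by the characterization recalled after \eqref{invariante3}, this means precisely that both ${\mathcal D}_{\alpha}^{+}$ and ${\mathcal D}_{\alpha}^{-}$ are flat Legendre foliations.

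It remains to assemble the pieces. If $(\phi_1,\phi_2,\phi_3)$ is normal, Proposition \ref{normality} gives that it is integrable — so the four eigendistributions are Legendre foliations — and that $N^{(3)}_{\phi_1}=N^{(3)}_{\phi_2}=0$, which by the computation above forces each of these foliations to be flat. Conversely, if ${\mathcal D}_{\alpha}^{\pm}$ are flat Legendre foliations for $\alpha\in\left\{1,2\right\}$, then they are in particular Legendre foliations, so the structure is integrable, and flatness yields $N^{(3)}_{\phi_1}=N^{(3)}_{\phi_2}=0$; Proposition \ref{normality} then gives normality. I do not expect a genuine obstacle here: the argument is a short direct computation, the only point requiring care being the control of the possible $\mathbb{R}\xi$-component of $[\xi,X]$, which is removed once and for all by the identity ${\mathcal L}_{\xi}\eta=0$.
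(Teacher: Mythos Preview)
Your proof is correct and follows essentially the same approach as the paper's: both reduce the statement to Proposition \ref{normality} together with the observation that $N^{(3)}_{\phi_\alpha}=0$ if and only if $\xi$ is foliate with respect to ${\mathcal D}_{\alpha}^{+}$ and ${\mathcal D}_{\alpha}^{-}$, which in turn is equivalent to flatness by the remark after \eqref{invariante3}. The paper merely asserts this equivalence, whereas you supply the explicit computation; note also that your assumption of integrability in the middle paragraph is not actually used in that computation (the direct-sum decomposition ${\mathcal D}={\mathcal D}_{\alpha}^{+}\oplus{\mathcal D}_{\alpha}^{-}$ holds pointwise by Proposition \ref{proprieta1}), though this does not affect the validity of the argument.
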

\begin{proof}
Taking the definition of $N^{(3)}_{\phi_\alpha}$ into account, one
can easily prove that $\xi$ is foliate with respect both to
${\mathcal D}_{\alpha}^{+}$ and ${\mathcal D}_{\alpha}^{-}$ if and
only if $N^{(3)}_{\phi_\alpha}=0$. Then the assertion follows from
this remark and  Proposition \ref{normality}.
\end{proof}

Thus we have seen that, under some natural assumptions, an almost
bi-paracontact structure on a contact manifold gives rise to a pair
of transverse (almost) bi-Legendrian structures $({\mathcal
D}_{1}^{+},{\mathcal D}_{1}^{-})$ and  $({\mathcal
D}_{2}^{+},{\mathcal D}_{2}^{-})$. Conversely we have the following
result.

\begin{proposition}\label{legendre2}
Let $(L,Q)$ and $(L',Q')$ be two transverse almost bi-Legendrian
structures on the contact manifold $(M,\eta)$. Then there exists a
Legendrian almost bi-paracontact structure
$(\phi_{1},\phi_{2},\phi_{3})$ such that $L$, $Q$ and $L'$, $Q'$
are, respectively, the eigendistributions of $\phi_1$ and $\phi_2$.
\end{proposition}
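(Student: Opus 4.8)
The natural candidates for $\phi_1$ and $\phi_2$ are the two reflections attached to the bi-Legendrian structures. Using the splitting $TM=L\oplus Q\oplus\mathbb{R}\xi$ coming from the first structure, I would set $\phi_1|_{L}=I$, $\phi_1|_{Q}=-I$, $\phi_1\xi=0$, and, using $TM=L'\oplus Q'\oplus\mathbb{R}\xi$, set $\phi_2|_{L'}=I$, $\phi_2|_{Q'}=-I$, $\phi_2\xi=0$; then put $\phi_3:=\phi_1\phi_2$. Since $L,Q,L',Q'$ are smooth subbundles of $TM$, the associated projections are smooth, so $\phi_1$ and $\phi_2$ are genuine $(1,1)$-tensor fields. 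Evaluating on each summand of the relevant decomposition one obtains at once $\phi_\alpha^{2}=I-\eta\otimes\xi$, $\phi_\alpha\xi=0$ and $\eta\circ\phi_\alpha=0$ for $\alpha=1,2$, and by construction the eigendistributions of $\phi_1$ for the eigenvalues $1$ and $-1$ are $L$ and $Q$, and those of $\phi_2$ are $L'$ and $Q'$; since $L,Q,L',Q'$ are $n$-dimensional, $\phi_1$ and $\phi_2$ induce almost paracomplex structures on $\mathcal{D}$.

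The one point that is not purely formal is the anti-commutativity $\phi_1\phi_2=-\phi_2\phi_1$. Granting it, one computes $\phi_3^{2}=\phi_1(\phi_2\phi_1)\phi_2=-\phi_1^{2}\phi_2^{2}=-(I-\eta\otimes\xi)^{2}=-I+\eta\otimes\xi$, together with $\phi_3\xi=0$ and $\eta(\xi)=1$, so that $(\phi_3,\xi,\eta)$ is an almost contact structure; moreover, since $d\eta$ vanishes on each of the Legendre distributions $L,Q,L',Q'$ one checks directly that $d\eta(\phi_1\cdot,\phi_1\cdot)=d\eta(\phi_2\cdot,\phi_2\cdot)=-d\eta$, hence $d\eta(\phi_3\cdot,\phi_3\cdot)=d\eta$, i.e. $\phi_3$ is compatible with $\eta$ and \eqref{legendrian} holds. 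Thus $(\phi_1,\phi_2,\phi_3)$ is an almost bi-paracontact structure whose eigendistributions are $L,Q,L',Q'$, and, these being Legendre distributions, it is Legendrian. Now, on $\mathcal{D}$ the relation $\phi_1\phi_2=-\phi_2\phi_1$ is \emph{equivalent} to the condition that $\phi_1$ interchange $L'$ and $Q'$: if $\phi_1(L')=Q'$ and $\phi_1(Q')=L'$, then for $X\in\Gamma(L')$ one has $\phi_1\phi_2 X=\phi_1 X\in\Gamma(Q')$, so $\phi_2\phi_1 X=-\phi_1 X=-\phi_1\phi_2 X$, and symmetrically on $\Gamma(Q')$; both operators kill $\xi$, and $TM=L'\oplus Q'\oplus\mathbb{R}\xi$, so $\phi_1\phi_2=-\phi_2\phi_1$ everywhere. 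Hence the proof reduces to deducing $\phi_1(L')=Q'$ from the hypothesis that the two bi-Legendrian structures are transverse.

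This reduction is the heart of the argument. Write $\mathcal{D}=L\oplus Q$; transversality of $L'$ with $Q$ makes the projection onto $L$ along $Q$ an isomorphism $L'\to L$, so $L'=\{v+\tau v : v\in\Gamma(L)\}$ for a bundle isomorphism $\tau\colon L\to Q$, and likewise $Q'=\{v+\sigma v : v\in\Gamma(L)\}$ for an isomorphism $\sigma\colon L\to Q$ with $\sigma-\tau$ invertible (transversality of $L'$ and $Q'$). The precise content of ``the two bi-Legendrian structures are transverse'' is then that $\sigma=-\tau$, equivalently $Q'=\phi_1(L')$ (one may also read this off from Proposition \ref{proprieta2}); granting it, $\phi_1(L')=\{v-\tau v : v\in\Gamma(L)\}=Q'$ and, since $\phi_1^{2}=I$ on $\mathcal{D}$, also $\phi_1(Q')=L'$, which is what was needed. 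So the main obstacle I foresee is pinning down exactly what transversality of a pair of bi-Legendrian structures should mean and extracting the swap $\phi_1(L')=Q'$ from it; once that is in hand, the verification that $(\phi_1,\phi_2,\phi_3)$ is a Legendrian almost bi-paracontact structure with the prescribed eigendistributions is the routine bookkeeping carried out above.
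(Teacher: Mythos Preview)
Your construction is exactly the paper's: define $\phi_1,\phi_2$ as the reflections attached to $(L,Q)$ and $(L',Q')$, set $\phi_3:=\phi_1\phi_2$, and observe that the eigendistributions are $L,Q,L',Q'$. The paper's entire proof is the one line ``one can easily check that $(\phi_1,\phi_2,\phi_3)$ is an almost bi-paracontact structure'', with no further justification.

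You are right that anti-commutativity is the only non-formal point, and your analysis of it is sharper than the paper's. The relation $\phi_1\phi_2=-\phi_2\phi_1$ on $\mathcal D$ is indeed equivalent to $\phi_1(L')=Q'$, i.e.\ to $\sigma=-\tau$ in your graph parametrisation, and this does \emph{not} follow from the four pairwise transversality conditions alone: on $(\mathbb R^{3},\,dz-y\,dx)$ with $X=\partial_y$, $Y=\partial_x+y\,\partial_z$, the four lines $L=\langle X\rangle$, $Q=\langle Y\rangle$, $L'=\langle X+Y\rangle$, $Q'=\langle X+2Y\rangle$ are pairwise transverse Legendre distributions, yet the associated $\phi_1,\phi_2$ do not anti-commute. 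The paper never formally defines ``transverse bi-Legendrian structures'' and simply glosses over this step; the intended meaning---visible from Proposition~\ref{proprieta2} and from the only application (Theorems~\ref{main1}--\ref{main2}, where $L'=\{X+\phi X:X\in L\}$ and $Q'=\{X-\phi X:X\in L\}$)---is precisely the strong one you isolate. So your reservation is well founded: the hypothesis must be read as including the swap condition $Q'=\phi_1(L')$, and once that is granted your argument and the paper's coincide.
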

\begin{proof}
We define $\phi_{1}|_{L}=I$, $\phi_{1}|_{Q}=-I$, $\phi_{1}\xi=0$ and
$\phi_{2}|_{L'}=I$, $\phi_{2}|_{Q'}=-I$, $\phi_{2}\xi=0$. Then we
set $\phi_{3}:=\phi_{1}\phi_{2}$. One can easily check that
$(\phi_1,\phi_2,\phi_3)$ is in fact an almost bi-paracontact
structure on $(M,\eta)$ such that, by construction, ${\mathcal
D}_{1}^{+}=L$, ${\mathcal D}_{1}^{-}=Q$ and ${\mathcal
D}_{2}^{+}=L'$, ${\mathcal D}_{2}^{-}=Q'$. In particular,
$(\phi_1,\phi_2,\phi_3)$ is Legendrian and it is integrable if and
only if $L$, $Q$, $L'$, $Q'$ are involutive.
\end{proof}

\section{Canonical connections on bi-paracontact manifolds}

In this section we attach to any almost bi-paracontact manifold some
canonical connections and then we study their nice properties. To
this end, we prove the following preliminary lemma.

\begin{lemma}\label{lemma1}
Let $(\phi_1,\phi_2,\phi_3)$ be an almost bi-paracontact structure
on the contact manifold $(M,\eta)$. For each
$\alpha\in\left\{1,2,3\right\}$ let $h_{\alpha}$ be the operator
defined by $h_{\alpha}:=\frac{1}{2}{\mathcal
L}_{\xi}\phi_{\alpha}=\frac{1}{2}N^{(3)}_{\phi_\alpha}$. Then
\begin{enumerate}
    \item[(a)] $h_{\alpha}\phi_{\alpha}=-\phi_{\alpha}h_{\alpha}$ for each $\alpha\in\left\{1,2,3\right\}$,
    \item[(b)] $\phi_{1}h_{2}+h_{1}\phi_{2}=h_{3}=-h_{2}\phi_{1}-\phi_{2}h_{1}$,\\
                $\phi_{1}h_{3}+h_{1}\phi_{3}=h_{2}=-h_{3}\phi_{1}-\phi_{3}h_{1}$, \\
                $\phi_{2}h_{3}+h_{2}\phi_{3}=-h_{1}=-h_{3}\phi_{2}-\phi_{3}h_{2}$.
\end{enumerate}
\end{lemma}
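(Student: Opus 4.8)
The plan is to derive everything from the algebraic identities relating the $\phi_\alpha$'s — namely $\phi_1^2=\phi_2^2=I-\eta\otimes\xi$, $\phi_1\phi_2=-\phi_2\phi_1=\phi_3$, together with the consequences $\phi_1\phi_3=-\phi_3\phi_1=\phi_2$ and $\phi_3\phi_2=-\phi_2\phi_3=\phi_1$ recorded just after the definition — and from the Leibniz rule for the Lie derivative $\mathcal L_\xi$. The single engine behind every assertion is: if $A$ and $B$ are $(1,1)$-tensors with $AB=\varepsilon BA$ for $\varepsilon=\pm1$, then applying $\mathcal L_\xi$ to the tensor equation $A B = \varepsilon B A$ (viewing $AB$ as a composition of endomorphisms) gives $(\mathcal L_\xi A)B + A(\mathcal L_\xi B) = \varepsilon\bigl((\mathcal L_\xi B)A + B(\mathcal L_\xi A)\bigr)$, i.e. $2h_? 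B + 2 A h_? = \varepsilon(2 h_? A + 2 B h_?)$ with the appropriate subscripts. So I would first record this derivation rule cleanly, being careful that $\mathcal L_\xi$ does commute with composition of endomorphism-valued tensors in the sense $(\mathcal L_\xi(A\circ B))X = (\mathcal L_\xi A)(BX) + A((\mathcal L_\xi B)X)$, which is the only nontrivial point and follows from the derivation property of $\mathcal L_\xi$ on the full tensor algebra.

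\textbf{Part (a).} For each $\alpha$, differentiate $\phi_\alpha^2 = I-\eta\otimes\xi$ with $\mathcal L_\xi$. Since $\mathcal L_\xi\eta=0$ (because $i_\xi d\eta=0$ and $i_\xi\eta=1$, so $\mathcal L_\xi\eta = i_\xi d\eta + d(i_\xi\eta)=0$) and $\mathcal L_\xi\xi=0$, the right-hand side is $\mathcal L_\xi$-closed, hence $(\mathcal L_\xi\phi_\alpha)\phi_\alpha + \phi_\alpha(\mathcal L_\xi\phi_\alpha)=0$, i.e. $h_\alpha\phi_\alpha + \phi_\alpha h_\alpha = 0$. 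For $\alpha=1,2$ this is immediate from $\phi_\alpha^2=I-\eta\otimes\xi$; for $\alpha=3$ one uses $\phi_3^2 = \phi_1\phi_2\phi_1\phi_2 = -\phi_1^2\phi_2^2 = -(I-\eta\otimes\xi)^2 = -(I-\eta\otimes\xi)$ (using $(\eta\otimes\xi)^2=\eta\otimes\xi$ and $\phi_\alpha\xi=0$, $\eta\circ\phi_\alpha=0$), which is again $\mathcal L_\xi$-closed. This gives (a).

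\textbf{Part (b).} Here I apply the derivation rule to the anti-commutation and product relations among \emph{distinct} $\phi_\alpha$. From $\phi_1\phi_2 = \phi_3$ I get $h_1\phi_2 + \phi_1 h_2 = h_3$; from $\phi_2\phi_1 = -\phi_3$ I get $h_2\phi_1 + \phi_2 h_1 = -h_3$; together these are the first displayed line. Likewise $\phi_1\phi_3=\phi_2$ and $\phi_3\phi_1=-\phi_2$ give the second line ($h_1\phi_3+\phi_1 h_3 = h_2$ and $h_3\phi_1+\phi_3 h_1 = -h_2$), and $\phi_2\phi_3 = -\phi_1$, $\phi_3\phi_2=\phi_1$ give the third line ($h_2\phi_3+\phi_2 h_3 = -h_1$ and $h_3\phi_2 + \phi_3 h_2 = -h_1$). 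Each identity is obtained by simply applying $\mathcal L_\xi$ to the corresponding algebraic identity among the $\phi$'s and dividing by $2$.

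\textbf{Main obstacle.} There is essentially no deep obstacle; the only thing requiring care is the bookkeeping that $\mathcal L_\xi$ acts as a derivation on compositions of $(1,1)$-tensors and that the ``constant'' tensors $\eta\otimes\xi$, $I$ are $\mathcal L_\xi$-parallel — this last point rests on $\mathcal L_\xi\eta=0$, which is exactly the integrability property of $\mathcal F_\xi$ noted in the Preliminaries. Once that is in place, all six identities of (b) and all three of (a) are one-line differentiations. I would therefore present the derivation rule as a displayed observation and then list the nine resulting identities, rather than writing each Lie-derivative computation out in full.
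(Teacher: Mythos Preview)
Your proposal is correct and follows exactly the same approach as the paper: apply the derivation property of $\mathcal{L}_\xi$ to the tensor identities $\phi_\alpha^2=\pm(I-\eta\otimes\xi)$ for part (a) and to the product relations $\phi_1\phi_2=\phi_3$, $\phi_2\phi_1=-\phi_3$, etc.\ for part (b), using $\mathcal{L}_\xi\eta=0$ and $\mathcal{L}_\xi\xi=0$. The paper's proof is in fact terser than yours, writing out only one identity in each part and declaring the rest ``similar''.
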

\begin{proof}
(a) \ Let us assume that $\alpha\in\left\{1,2\right\}$. Then $({\mathcal L}_{\xi}\phi_{\alpha})\circ\phi_{\alpha}+\phi_{\alpha}\circ({\mathcal L}_{\xi}\phi_{\alpha})={\mathcal L}_{\xi}(\phi_{\alpha}^2)={\mathcal L}_{\xi}(I-\eta\otimes\xi)=-({\mathcal L}_{\xi}\eta)\otimes\xi-\eta\otimes({\mathcal L}_{\xi}\xi)=0$, since ${\mathcal L}_{\xi}\eta=i_{\xi}d\eta+d i_{\xi}\eta=0$ by \eqref{contatto4}. Thus $h_{\alpha}\circ\phi_{\alpha}=-\phi_{\alpha}\circ h_{\alpha}$. The case $\alpha=3$ is similar.\\
(b) \ $2h_{3}={\mathcal L}_{\xi}\phi_{3}={\mathcal
L}_{\xi}(\phi_{1}\phi_{2})=({\mathcal
L}_{\xi}\phi_{1})\phi_{2}+\phi_{1}({\mathcal
L}_{\xi}\phi_{2})=2h_{1}\phi_{2}+2\phi_{1}h_{2}$. The other
equalities can be proved in a similar way.
\end{proof}

\begin{theorem}\label{connessioni}
Let $(\phi_1,\phi_2,\phi_3)$ be an almost bi-paracontact structure
on the contact manifold $(M,\eta)$. For each
$\alpha\in\left\{1,2,3\right\}$ there exists a unique linear
connection $\nabla^\alpha$ on $M$ satisfying the following
properties:
\begin{enumerate}
  \item[(i)] $\nabla^\alpha \xi=0$,
  \item[(ii)] $\nabla^{1}\phi_{1}=0, \ \ \nabla^{1}\phi_{2}=\eta\otimes(2h_{2}-h_{1}\phi_{3}+\phi_{3}h_1), \ \ \nabla^{1}\phi_{3}=\eta\otimes(2h_{3}-h_{1}\phi_{2}+\phi_{2}h_{1})$,\\
  $\nabla^{2}\phi_{1}=\eta\otimes(2h_{1}+h_{2}\phi_{3}-\phi_{3}h_{2}), \ \ \nabla^{2}\phi_{2}=0, \ \ \nabla^{2}\phi_{3}=\eta\otimes(2h_{3}+h_{2}\phi_{1}-\phi_{1}h_{2})$,\\
  $\nabla^{3}\phi_{1}=\eta\otimes(2h_{1}-h_{3}\phi_{2}+\phi_{2}h_{3}), \ \ \nabla^{3}\phi_{2}=\eta\otimes(2h_{2}+h_{3}\phi_{1}-\phi_{1}h_3), \ \ \nabla^{3}\phi_{3}=0$,
  \item[(iii)]   $T^\alpha(\phi_\alpha
  X,Y)-T^\alpha(X,\phi_\alpha Y)=2\left(d\eta(\phi_\alpha X,Y)-d\eta(X,\phi_\alpha
  Y)\right)\xi+\eta(Y)h_\alpha X+\eta(X)h_\alpha Y$ for any $X,Y\in\Gamma(TM)$,
\end{enumerate}
where $T^\alpha$ denotes the torsion tensor field of
$\nabla^\alpha$. $\nabla^{1}$, $\nabla^{2}$, $\nabla^{3}$ are
explicitly given by
\begin{align}
\nonumber    \nabla^{1}_{X}Y&=\frac{1}{4}\bigl([X,Y]-[\phi_1
X,\phi_1 Y]+\phi_1[X,\phi_1 Y]-\phi_1[\phi_1 X,Y]+\phi_2[X,\phi_2
Y]-\phi_3[X,\phi_3 Y]\\
\label{espressione1}   &\quad+\phi_3[\phi_1 X,\phi_2
Y]-\phi_2[\phi_1 X,\phi_3
Y]+2\eta(X)\left(-h_{1}\phi_{1}Y+h_{2}\phi_{2}Y-h_{3}\phi_{3}Y\right)\\
\nonumber
&\quad+2\eta(Y)h_{1}\phi_{1}X-\eta([X,Y])\xi+\eta([\phi_1 X,\phi_1
Y])\xi\bigr)+X(\eta(Y))\xi,\\
\nonumber    \nabla^{2}_{X}Y&=\frac{1}{4}\bigl([X,Y]-[\phi_2
X,\phi_2 Y]+\phi_2[X,\phi_2 Y]-\phi_2[\phi_2 X,Y]+\phi_1[X,\phi_1
Y]-\phi_3[X,\phi_3 Y]\\
\label{espressione2}    &\quad-\phi_3[\phi_2 X,\phi_1
Y]+\phi_1[\phi_2 X,\phi_3
Y]+2\eta(X)\left(h_{1}\phi_{1}Y-h_{2}\phi_{2}Y-h_{3}\phi_{3}Y\right)\\
\nonumber
&\quad+2\eta(Y)h_{2}\phi_{2}X-\eta([X,Y])\xi+\eta([\phi_2 X,\phi_2
Y])\xi\bigr)+X(\eta(Y))\xi,\\
\nonumber
\nabla^{3}_{X}Y&=\frac{1}{4}\bigl([X,Y]+[\phi_{3}X,\phi_{3}Y]+\phi_1[X,\phi_1
Y]+\phi_2[X,\phi_2 Y]-\phi_3[X,\phi_3 Y]+\phi_3[\phi_3 X,Y]\\
\label{espressione3}   &\quad+\phi_2[\phi_3 X,\phi_1
Y]-\phi_1[\phi_3 X,\phi_2
Y]+2\eta(X)\left(h_{1}\phi_{1}Y+h_{2}\phi_{2}Y+h_{3}\phi_{3}Y\right)\\
\nonumber &\quad-2\eta(Y)h_{3}\phi_{3}X-\eta([X,Y])\xi-\eta([\phi_3
X,\phi_3 Y])\xi\bigr)+X(\eta(Y))\xi.
\end{align}
\end{theorem}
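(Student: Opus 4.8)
The plan is to prove existence and uniqueness separately, carrying out the details only for $\alpha=1$ (the case $\alpha=2$ is obtained by interchanging $\phi_1$ and $\phi_2$, and $\alpha=3$ runs entirely parallel, with the one proviso that $\phi_3$ has no real eigendistributions, so there one works with the eigendistributions of $\phi_1$ together with the now-global relation $\nabla^3\phi_3=0$ and must invoke (iii) for two independent choices of arguments in order to pin down both the symmetric and the skew-symmetric parts of $\nabla^3$ along $\mathcal D$). Throughout, $\mathrm{pr}_{\mathcal D_1^{\pm}}$ denote the projections associated with the splitting $TM=\mathcal D_1^+\oplus\mathcal D_1^-\oplus\mathbb R\xi$.

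For uniqueness, suppose $\nabla:=\nabla^1$ satisfies (i)--(iii). From $\nabla\phi_1=0$ the connection preserves $\mathcal D_1^{\pm}$ in every direction, and since $\nabla\phi_2$ and $\nabla\phi_3$ have the form $\eta\otimes(\cdot)$ it commutes with $\phi_2$ and $\phi_3$ along $\mathcal D$; with $\nabla\xi=0$ this also gives $\nabla\eta=0$. I would then read off $\nabla$ from (iii) in three steps. First, putting $Y=\xi$ in (iii) and using $\phi_1\xi=0$, $h_1\xi=0$ yields $T^1(\phi_1 X,\xi)=h_1 X$, hence $T^1(X,\xi)=\pm h_1 X$ for $X\in\Gamma(\mathcal D_1^{\pm})$, so that $\nabla_\xi X=[\xi,X]\mp h_1 X=\tfrac12(I\pm\phi_1)[\xi,X]$, the projection of $[\xi,X]\in\Gamma(\mathcal D)$ onto $\mathcal D_1^{\pm}$. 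Second, taking $X\in\Gamma(\mathcal D_1^+)$, $Y\in\Gamma(\mathcal D_1^-)$ in (iii) forces $T^1(X,Y)=2d\eta(X,Y)\xi$; decomposing $\nabla_X Y-\nabla_Y X-[X,Y]=2d\eta(X,Y)\xi$ along $\mathcal D_1^+\oplus\mathcal D_1^-\oplus\mathbb R\xi$ and using preservation of $\mathcal D_1^{\pm}$ gives $\nabla_X Y=\mathrm{pr}_{\mathcal D_1^-}[X,Y]$ and $\nabla_Y X=\mathrm{pr}_{\mathcal D_1^+}[Y,X]$. Third, for $X,Y\in\Gamma(\mathcal D_1^+)$ one has $\phi_3 Y\in\Gamma(\mathcal D_1^-)$ by Proposition~\ref{proprieta1}, so $\phi_3\nabla_X Y=\nabla_X(\phi_3 Y)=\mathrm{pr}_{\mathcal D_1^-}[X,\phi_3 Y]$, which determines $\nabla_X Y$ because $\phi_3$ restricts to an isomorphism $\mathcal D_1^+\to\mathcal D_1^-$; the case $X,Y\in\Gamma(\mathcal D_1^-)$ is analogous. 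Extending by $C^\infty(M)$-linearity in $X$ and the Leibniz rule in $Y$ (using $\nabla\xi=0$) then shows $\nabla$ is unique.

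For existence, it remains to check that the right-hand sides of \eqref{espressione1}--\eqref{espressione3} define linear connections enjoying (i)--(iii). That each expression is $C^\infty(M)$-linear in $X$ is a direct computation: under $X\mapsto fX$ the brackets produce derivative-of-$f$ terms which cancel in pairs once one uses $\phi_1^2=\phi_2^2=I-\eta\otimes\xi$, $\phi_3^2=-I+\eta\otimes\xi$, $\phi_3\phi_1=-\phi_2$, $\phi_2\phi_1=-\phi_3$, etc.; the Leibniz rule in $Y$ is checked likewise, the terms $2\eta(Y)h_\alpha\phi_\alpha X$ and $X(\eta(Y))\xi$ supplying precisely the needed corrections. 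Then (i) reduces, for $X\in\Gamma(\mathcal D)$, to an identity expressing $\mathcal L_\xi\phi_\alpha$ through iterated brackets, immediate from $\phi_\alpha^2=\pm(I-\eta\otimes\xi)$ on $\mathcal D$. For (ii): once one has verified that $\nabla^\alpha$ preserves the eigendistributions of $\phi_1,\phi_2$ in $\mathcal D$-directions (and $\nabla^\alpha\xi=0$), the relations $\nabla^\alpha\phi_\alpha=0$ and $\nabla^\alpha_X\phi_\beta=0$ for $X\in\Gamma(\mathcal D)$ follow, and the only substantial point is the computation of $\nabla^\alpha_\xi\phi_\beta$ for $\beta\neq\alpha$, which — using the product rule $\mathcal L_\xi(\phi_1\phi_2)=(\mathcal L_\xi\phi_1)\phi_2+\phi_1(\mathcal L_\xi\phi_2)$, Lemma~\ref{lemma1}, and the value of $\nabla^\alpha_\xi$ obtained as in the uniqueness step — reproduces the stated correction tensors. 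Finally, (iii) follows from the explicit formulas by substitution. I expect the bookkeeping in verifying (ii) and (iii) — keeping track of the operators $h_1,h_2,h_3$, the identities of Lemma~\ref{lemma1}(b), and the products $\phi_\alpha\phi_\beta=\pm\phi_\gamma$ — together with the $C^\infty(M)$-linearity check, to be the only real obstacle; the uniqueness argument itself is short.
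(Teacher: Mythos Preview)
Your proposal is correct and, for $\alpha\in\{1,2\}$, follows essentially the same line as the paper --- only the packaging differs. The paper first proves uniqueness by a short difference-tensor argument (set $A=\nabla-\nabla'$, observe $A(\cdot,\phi_\beta\cdot)=\phi_\beta A(\cdot,\cdot)$ along $\mathcal D$, and use (iii) to force $A=0$), and then derives the explicit formula from the axioms as its existence step; your ``uniqueness'' is exactly this derivation (same use of (iii) at $Y=\xi$, same reading of $T^\alpha(X,Y)=2d\eta(X,Y)\xi$ for $X\in\mathcal D_\alpha^+$, $Y\in\mathcal D_\alpha^-$, and then the $\phi_\beta$-trick to recover $\nabla^\alpha_X X'$ for $X,X'\in\mathcal D_\alpha^+$ --- the paper happens to use $\phi_2$ rather than your $\phi_3$, which is immaterial), and your ``existence'' is the verification the paper dismisses with ``one can easily check''.

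The one substantive divergence is $\alpha=3$. The paper does not work with the eigendistributions of $\phi_1$ as you propose; instead it complexifies and uses the $\pm i$-eigenbundles $\mathcal D_3^\pm$ of $\phi_3$, with projections $p_{\mathcal D_3^\pm}=\tfrac12(I\mp i\phi_3-\eta\otimes\xi)$. This makes the argument run verbatim as for $\alpha\in\{1,2\}$, because (iii) then decomposes cleanly along $\mathcal D_3^+\oplus\mathcal D_3^-$. Your real-eigendistribution approach is not wrong in principle, but the torsion condition (iii) for $\phi_3$ does not split nicely along $\mathcal D_1^\pm$ (since $\phi_3$ interchanges $\mathcal D_1^+$ and $\mathcal D_1^-$), so the ``two independent choices'' you allude to would need to be spelled out carefully; the complexification sidesteps that bookkeeping entirely.
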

\begin{proof}
First of all we prove the uniqueness. Fix an
$\alpha\in\left\{1,2,3\right\}$ and suppose that $\nabla$ and
$\nabla'$ are two linear connections satisfying (i), (ii) and (iii).
Let us define the tensor $A:=\nabla-\nabla'$. For any
$X,Y\in\Gamma({\mathcal D})$, since both $\nabla$ and $\nabla'$
preserve the almost bi-paracontact structure, one has
\begin{equation}\label{unicita0}
A(X,\phi_\beta Y)=\phi_{\beta}A(X,Y)
\end{equation}
for each $\beta\in\left\{1,2,3\right\}$. Because of (i), we have
$A(X,\xi)=0$ for all $X\in\Gamma(TM)$. Next, for all
$Y\in\Gamma({\mathcal D})$,
\begin{align*}
A(\xi,Y)&=\nabla_{\xi}Y-\nabla'_{\xi}Y\\
&=\nabla_{Y}\xi+T(\xi,Y)+[\xi,Y]-\nabla'_{Y}\xi-T'(\xi,Y)-[\xi,Y]\\
&=\epsilon\left(T(\xi,\phi_{\alpha}^{2}Y)-T'(\xi,\phi_{\alpha}^{2}Y)\right)\\
&=\epsilon\bigl(T(\phi_{\alpha}\xi,\phi_{\alpha}Y)-2\left(d\eta(\phi_{\alpha}\xi,\phi_{\alpha}Y)-d\eta(\xi,\phi_{\alpha}^{2}Y)\right)\xi-\eta(\phi_{\alpha}^{2}Y)h_{\alpha}\xi-\eta(\xi)h_{\alpha}\phi_{\alpha}^{2}Y\\
&\quad-T'(\phi_{\alpha}\xi,\phi_{\alpha}Y)+2\left(d\eta(\phi_{\alpha}\xi,\phi_{\alpha}Y)-d\eta(\xi,\phi_{\alpha}^{2}Y)\right)\xi+\eta(\phi_{\alpha}^{2}Y)h_{\alpha}\xi+\eta(\xi)h_{\alpha}\phi_{\alpha}^{2}Y\bigr)=0,
\end{align*}
where we have applied (ii) and (iii), and we have put $\epsilon=1$
if $\alpha\in\{1,2\}$, $\epsilon=-1$ if $\alpha=3$. Further, from
(iii) it follows that $T(\phi_\alpha X,Y)-T(X,\phi_\alpha
Y)=T'(\phi_\alpha X,Y)-T'(X,\phi_\alpha Y)$, that is
$\nabla_{\phi_{\alpha}X}Y-\nabla_{Y}\phi_{\alpha}X-\nabla_{X}\phi_{\alpha}Y+\nabla_{\phi_{\alpha}Y}X=\nabla'_{\phi_{\alpha}X}Y-\nabla'_{Y}\phi_{\alpha}X-\nabla'_{X}\phi_{\alpha}Y+\nabla'_{\phi_{\alpha}Y}X$.
Consequently,
\begin{equation}\label{unicita1}
A(\phi_{\alpha}X,Y)-A(Y,\phi_{\alpha}X)-A(X,\phi_{\alpha}Y)+A(\phi_{\alpha}Y,X)=0.
\end{equation}
If in \eqref{unicita1} we take $X\in\Gamma({\mathcal
D}^{+}_{\alpha})$ and $Y\in\Gamma({\mathcal D}^{-}_{\alpha})$ we
obtain
\begin{equation}\label{unicita2}
A(X,Y)=A(Y,X).
\end{equation}
By virtue of (ii), for each $Z\in\Gamma({\mathcal D})$, $\nabla_{Z}$
and $\nabla'_{Z}$ preserve the distributions ${\mathcal
D}^{\pm}_{\alpha}$. Thus $A(X,Y)\in\Gamma({\mathcal
D}_{\alpha}^{-})$ and $A(Y,X)\in\Gamma({\mathcal D}_{\alpha}^{+})$.
This together with \eqref{unicita2} and 5. of Proposition
\ref{proprieta1} imply that
\begin{equation}\label{unicita3}
A(X,Y)=A(Y,X)=0.
\end{equation}
Now let us consider $X,X'\in\Gamma({\mathcal D}^{+}_{\alpha})$ and
let $\beta\in\left\{1,2\right\}$, $\beta\neq\alpha$.  Note that, by
1.--2. of Proposition \ref{proprieta1},
$\phi_{\beta}X'\in\Gamma({\mathcal D}^{-}_{\alpha})$. Then, by
\eqref{unicita0} and \eqref{unicita3}, $A(X,X')=A(X,\phi_{\beta}^2
X')=\phi_{\beta}A(X,\phi_{\beta}X')=0$. In a similar way one can
prove that $A(X',X)=0$. Thus the tensor $A$ vanishes identically and
so $\nabla$ and $\nabla'$ coincide.

In order to prove the existence, for each
$\alpha\in\left\{1,2,3\right\}$, of a connection $\nabla^{\alpha}$
satisfying (i), (ii), (iii), we distinguish the cases
$\alpha\in\left\{1,2\right\}$ and $\alpha=3$. Let us consider
$\alpha\in\left\{1,2\right\}$. First of all, we put, by definition,
$\nabla^{\alpha}\xi:=0$. Next, notice that by (iii) we have that
$T^{\alpha}(\phi_{\alpha}X,\xi)=h_{\alpha}X$, for all
$X\in\Gamma(TM)$. In particular, for any $X\in\Gamma({\mathcal D})$,
$T^{\alpha}(X,\xi)=T(\phi_{\alpha}^2
X,\xi)=h_{\alpha}\phi_{\alpha}X$. It follows that necessarily
\begin{equation}\label{proprieta8}
\nabla^{\alpha}_{\xi}X=-h_{\alpha}\phi_{\alpha}X+[\xi,X]
\end{equation}
for all $X\in\Gamma({\mathcal D})$. In particular,
\begin{equation}\label{esistenza1}
\nabla^{\alpha}_{\xi}X=\left\{
                         \begin{array}{ll}
                           \left[\xi,X\right]_{{\mathcal D}_{\alpha}^{+}}, & \hbox{if $X\in\Gamma({\mathcal D}_{\alpha}^{+})$;}\\
                           \left[\xi,X\right]_{{\mathcal D}_{\alpha}^{-}}, & \hbox{if $X\in\Gamma({\mathcal D}_{\alpha}^{-})$.}
                         \end{array}
                       \right.
\end{equation}
Further, for any $X\in\Gamma({\mathcal D}^{+}_{\alpha})$ and
$Y\in\Gamma({\mathcal D}^{-}_{\alpha})$,
\begin{align*}
T^{\alpha}(X,Y)&=T^{\alpha}(\phi_{\alpha}X,Y)\\
&=T^{\alpha}(X,\phi_{\alpha}Y)+2\left(d\eta(\phi_\alpha
X,Y)-d\eta(X,\phi_{\alpha}Y)\right)\xi\\
&=-T^{\alpha}(X,Y)+4d\eta(X,Y)\xi,
\end{align*}
from which it follows that $T^{\alpha}(X,Y)=2d\eta(X,Y)\xi$. Hence,
$2d\eta(X,Y)\xi=\nabla^{\alpha}_{X}Y-\nabla^{\alpha}_{Y}X-[X,Y]_{{\mathcal
D}_{\alpha}^{+}}-[X,Y]_{{\mathcal D}_{\alpha}^{-}}-\eta([X,Y])\xi$,
that is
\begin{equation}\label{esistenza2}
\nabla^{\alpha}_{X}Y-[X,Y]_{{\mathcal
D}_{\alpha}^{-}}=\nabla^{\alpha}_{Y}X-[X,Y]_{{\mathcal
D}_{\alpha}^{+}}.
\end{equation}
Since, due to (ii), $\nabla^{\alpha}_{X}Y\in\Gamma({\mathcal
D}_{\alpha}^{-})$ and $\nabla^{\alpha}_{Y}X\in\Gamma({\mathcal
D}_{\alpha}^{+})$, both the sides of \eqref{esistenza2} must vanish
and we conclude that
\begin{equation}\label{definizione1}
\nabla^{\alpha}_{X}Y=[X,Y]_{{\mathcal D}_{\alpha}^{-}}, \ \
\nabla^{\alpha}_{Y}X=[Y,X]_{{\mathcal D}_{\alpha}^{+}}.
\end{equation}
Moreover, taking 1.--2. of Proposition \ref{proprieta1} into
account, for any $X,X'\in\Gamma({\mathcal D}_{\alpha}^{+})$ we have
\begin{equation}\label{definizione2}
\nabla^{\alpha}_{X}X'=\nabla^{\alpha}_{X}\phi_{\beta}^{2}X'=\phi_{\beta}\nabla^{\alpha}_{X}\phi_{\beta}X'=\phi_{\beta}\left[X,\phi_{\beta}X'\right]_{{\mathcal
D}^{-}_{\alpha}}
\end{equation}
and, for any $Y,Y'\in\Gamma({\mathcal D}_{\alpha}^{-})$,
\begin{equation}\label{definizione3}
\nabla^{\alpha}_{Y}Y'=\nabla^{\alpha}_{Y}\phi_{\beta}^{2}Y'=\phi_{\beta}\nabla^{\alpha}_{Y}\phi_{\beta}Y'=\phi_{\beta}\left[Y,\phi_{\beta}Y'\right]_{{\mathcal
D}^{+}_{\alpha}},
\end{equation}
where $\beta\in\left\{1,2\right\}$, $\beta\neq\alpha$. Now we
decompose any $X,Y\in\Gamma(TM)$ as $X=X_{+}+X_{-}+\eta(X)\xi$ and
$Y=Y_{+}+Y_{-}+\eta(Y)\xi$, where $X_{+}, Y_{+}$ and $X_{-}, Y_{-}$
denote the projections onto the subbundles ${\mathcal
D}_{\alpha}^{+}$ and ${\mathcal D}_{\alpha}^{-}$ of $TM$,
respectively. Then by \eqref{esistenza2}, \eqref{definizione1},
\eqref{definizione2} and \eqref{definizione3}  we get
\begin{align}\label{esistenza3}
\nabla^{\alpha}_{X}Y&=\phi_{\beta}\left[X_{+},\phi_{\alpha}Y_{+}\right]_{{\mathcal
D}_{\alpha}^{-}}+\left[X_{+},Y_{-}\right]_{{\mathcal
D}_{\alpha}^{-}}+\left[X_{-},Y_{+}\right]_{{\mathcal
D}_{\alpha}^{+}}+\phi_{\beta}\left[X_{-},\phi_{\alpha}Y_{-}\right]_{{\mathcal
D}_{\alpha}^{+}}\nonumber\\
&\quad+X(\eta(Y))\xi+\eta(X)\left[\xi,Y_{+}\right]_{{\mathcal
D}_{\alpha}^{+}}+\eta(X)\left[\xi,Y_{-}\right]_{{\mathcal
D}_{\alpha}^{-}}.
\end{align}
Notice that, as one can easily check,
\begin{equation}\label{formuleproiezioni}
X_{+}=\frac{1}{2}\left(X+\phi_{\alpha}X-\eta(X)\xi\right), \ \ \
X_{-}=\frac{1}{2}\left(X-\phi_{\alpha}X-\eta(X)\xi\right).
\end{equation}
Then, applying \eqref{formuleproiezioni} to \eqref{esistenza3},
after some very long but straightforward computations, we get
\begin{align}\label{definizione4}
\nonumber    \nabla^{\alpha}_{X}Y&=X(\eta(Y))\xi+\frac{1}{4}\bigl(\left[X,Y\right]-\left[\phi_{\alpha}X,\phi_{\alpha}Y\right]-\phi_{\alpha}\left[\phi_{\alpha}X,Y\right]+\phi_{\alpha}\left[X,\phi_{\alpha}Y\right]+\phi_{\beta}\left[X,\phi_{\beta}Y\right]\\
\nonumber    &\quad-\phi_{\beta}\phi_{\alpha}\left[X,\phi_{\beta}\phi_{\alpha}Y\right]-\phi_{\beta}\phi_{\alpha}\left[\phi_{\alpha}X,\phi_{\beta}Y\right]+\phi_{\beta}\left[\phi_{\alpha}X,\phi_{\beta}\phi_{\alpha}Y\right]+\eta(X)\phi_{\alpha}\left[\xi,\phi_{\alpha}Y\right]\\
&\quad-\eta(Y)\phi_{\alpha}\left[\xi,\phi_{\alpha}X\right]-\eta(X)\phi_{\beta}\left[\xi,\phi_{\beta}Y\right]+\eta(X)\phi_{\beta}\phi_{\alpha}\left[\xi,\phi_{\beta}\phi_{\alpha}Y\right]+\eta(Y)[\xi,X]\\
\nonumber&\quad+\eta(X)[\xi,Y]-\eta([X,Y])\xi+\eta\left(\left[\phi_{\alpha}X,\phi_{\alpha}Y\right]\right)\xi-\eta(Y)\eta([\xi,X])\xi-\eta(X)\eta([\xi,Y])\xi\bigr).
\end{align}
Then we can take \eqref{definizione4} as a definition and  one can
easily check that, for each $\alpha\in\left\{1,2\right\}$,
$\nabla^{\alpha}$ satisfies (i), (ii) and (iii). Moreover, taking
the definition of the operators $h_1$, $h_2$, $h_3$ into account, it
is not difficult to verify that \eqref{definizione4} implies
\eqref{espressione1}--\eqref{espressione2}. \ It remains to prove
the theorem for $\alpha=3$. In that case the same construction as
for $\alpha\in\left\{1,2\right\}$ can be repeated, but now arguing
on the eigendistributions ${\mathcal D}^{+}_{3}$ and ${\mathcal
D}^{-}_{3}$ of $\phi_3$ corresponding to $i$ and $-i$, respectively,
and replacing \eqref{formuleproiezioni} with
\begin{equation*}
p_{{\mathcal
D}_{3}^{+}}=\frac{1}{2}\left(I-i\phi_{3}-\eta\otimes\xi\right), \ \
\ p_{{\mathcal
D}_{3}^{-}}=\frac{1}{2}\left(I+i\phi_{3}-\eta\otimes\xi\right).
\end{equation*}
Then after very long computations one obtains
\begin{align*}
\nabla^{3}_{X}Y&=X(\eta(Y))\xi+\frac{1}{4}\bigl(\left[X,Y\right]+\left[\phi_{3}X,\phi_{3}Y\right]+\phi_{1}\left[X,\phi_{1}Y\right]+\phi_{2}\left[X,\phi_{2}Y\right]-\phi_{3}\left[X,\phi_{3}Y\right]\\
&\quad+\phi_{3}[\phi_{3}X,Y]+\phi_{2}\left[\phi_{3}X,\phi_{1}Y\right]-\phi_{1}\left[\phi_{3}X,\phi_{2}Y\right]-\eta(X)\phi_{1}\left[\xi,\phi_{1}Y\right]+\eta(Y)\phi_{3}\left[\xi,\phi_{3}X\right]\\
&\quad-\eta(X)\phi_{2}\left[\xi,\phi_{2}Y\right]-\eta(X)\phi_{3}\left[\xi,\phi_{3}Y\right]-\eta(Y)[\xi,\phi_{3}^{2}X]+\eta(X)[\xi,\phi_{1}^{2}Y]-\eta([X,Y])\xi\\
&\quad-\eta\left(\left[\phi_{3}X,\phi_{3}Y\right]\right)\xi\bigr),
\end{align*}
from which \eqref{espressione3} follows.
\end{proof}

\begin{proposition}\label{torsione7}
The torsion tensor fields of the linear connections $\nabla^{1}$,
$\nabla^{2}$, $\nabla^{3}$ stated in Theorem \ref{connessioni} are
given by
\begin{align}
\nonumber    T^{1}(X,Y)&=\frac{1}{4}\bigl(\bigl(N^{(1)}_{\phi_3}-N^{(1)}_{\phi_2}\bigr)(X,Y)+\bigl(N^{(1)}_{\phi_3}-N^{(1)}_{\phi_2}\bigr)(\phi_{1}X,\phi_{1}Y)\bigr)\\
\label{torsione1}&\quad+\bigl(d\eta(X,Y)-d\eta(\phi_{1}X,\phi_{1}Y)\bigr)\xi\\
\nonumber    &\quad+\frac{1}{2}\bigl(\eta(X)\bigl(-2h_{1}\phi_{1}Y+h_{2}\phi_{2}Y-h_{3}\phi_{3}Y\bigr)-\eta(Y)\bigl(-2h_{1}\phi_{1}X+h_{2}\phi_{2}X-h_{3}\phi_{3}X\bigr)\bigr),\\
\nonumber    T^{2}(X,Y)&=\frac{1}{4}\bigl(\bigl(N^{(1)}_{\phi_3}-N^{(1)}_{\phi_1}\bigr)(X,Y)+\bigl(N^{(1)}_{\phi_3}-N^{(1)}_{\phi_1}\bigr)(\phi_{2}X,\phi_{2}Y)\bigr)\\
\label{torsione2}&\quad+\bigl(d\eta(X,Y)-d\eta(\phi_{2}X,\phi_{2}Y)\bigr)\xi\\
\nonumber   &\quad+\frac{1}{2}\bigl(\eta(X)\bigl(h_{1}\phi_{1}Y-2h_{2}\phi_{2}Y-h_{3}\phi_{3}Y\bigr)-\eta(Y)\bigl(h_{1}\phi_{1}X-2h_{2}\phi_{2}X-h_{3}\phi_{3}X\bigr)\bigr),\\
\nonumber   T^{3}(X,Y)&=-\frac{1}{4}\bigl(\bigl(N^{(1)}_{\phi_1}+N^{(1)}_{\phi_1}\bigr)(X,Y)-\bigl(N^{(1)}_{\phi_1}+N^{(1)}_{\phi_2}\bigr)(\phi_{3}X,\phi_{3}Y)\bigr)\\
\label{torsione3}&\quad+\bigl(d\eta(X,Y)+d\eta(\phi_{3}X,\phi_{3}Y)\bigr)\xi\\
\nonumber
&\quad+\frac{1}{2}\bigl(\eta(X)\bigl(h_{1}\phi_{1}Y+h_{2}\phi_{2}Y+2h_{3}\phi_{3}Y\bigr)-\eta(Y)\bigl(h_{1}\phi_{1}X+h_{2}\phi_{2}X+2h_{3}\phi_{3}X\bigr)\bigr).
\end{align}
\end{proposition}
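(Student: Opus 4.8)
The plan is to read the torsion off its definition $T^{\alpha}(X,Y)=\nabla^{\alpha}_{X}Y-\nabla^{\alpha}_{Y}X-[X,Y]$, substituting the closed formulas \eqref{espressione1}, \eqref{espressione2}, \eqref{espressione3} obtained in Theorem \ref{connessioni}. Since the expression for $\nabla^{2}$ is obtained from that for $\nabla^{1}$ by interchanging $\phi_{1}$ and $\phi_{2}$ (together with the sign change $\phi_{3}\to-\phi_{3}$ forced by $\phi_{2}\phi_{1}=-\phi_{1}\phi_{2}$), and $\nabla^{3}$ differs only through the sign changes imposed by $\phi_{3}^{2}=-I+\eta\otimes\xi$, it suffices to perform the computation carefully for $\nabla^{1}$ and then transcribe. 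I would split the terms of $T^{1}(X,Y)$ into three groups: (a) the bracket-valued terms, (b) the terms proportional to $\eta(X)$ or $\eta(Y)$, and (c) the $\xi$-valued terms.

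For (a): antisymmetrising \eqref{espressione1} in $X$ and $Y$, the summand $\phi_{1}[X,\phi_{1}Y]-\phi_{1}[\phi_{1}X,Y]$ turns out to be symmetric in $X,Y$, so it drops out, and what is left is a combination of the brackets $[X,Y]$, $[\phi_{1}X,\phi_{1}Y]$, $\phi_{2}[\cdot,\cdot]$, $\phi_{3}[\cdot,\cdot]$, $\phi_{3}[\phi_{1}\cdot,\phi_{2}\cdot]$, $\phi_{2}[\phi_{1}\cdot,\phi_{3}\cdot]$. The decisive step is to compare this with $N^{(1)}_{\phi_{3}}-N^{(1)}_{\phi_{2}}$ evaluated on $(X,Y)$ \emph{and} on $(\phi_{1}X,\phi_{1}Y)$. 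Expanding the definitions \eqref{n1}, \eqref{n1p} of the Nijenhuis tensors and using $\phi_{3}\phi_{1}=-\phi_{2}$, $\phi_{2}\phi_{1}=-\phi_{3}$, one sees that $(N^{(1)}_{\phi_{3}}-N^{(1)}_{\phi_{2}})(\phi_{1}X,\phi_{1}Y)$ contains the bracket term $[\phi_{2}X,\phi_{2}Y]-[\phi_{3}X,\phi_{3}Y]$, which is exactly opposite to the corresponding term of $(N^{(1)}_{\phi_{3}}-N^{(1)}_{\phi_{2}})(X,Y)$; once these cancel, a term-by-term check identifies the bracket part of $T^{1}$ with $\frac{1}{4}(N^{(1)}_{\phi_{3}}-N^{(1)}_{\phi_{2}})(X,Y)+\frac{1}{4}(N^{(1)}_{\phi_{3}}-N^{(1)}_{\phi_{2}})(\phi_{1}X,\phi_{1}Y)$ modulo $\xi$-valued remainders, which I carry over to group (c).

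Group (b) is immediate: the $\eta$-dependent part of \eqref{espressione1} is $2\eta(X)(-h_{1}\phi_{1}Y+h_{2}\phi_{2}Y-h_{3}\phi_{3}Y)+2\eta(Y)h_{1}\phi_{1}X$, and antisymmetrising and collecting the coefficients of $\eta(X)$ and of $\eta(Y)$ produces directly the last line of \eqref{torsione1}. For group (c) one gathers the $\eta([\cdot,\cdot])\xi$ and $X(\eta(Y))\xi$ terms of $\nabla^{1}_{X}Y-\nabla^{1}_{Y}X$ together with the $\xi$-remainders from (a); using $\eta\circ\phi_{1}=0$, which gives $\eta([\phi_{1}X,\phi_{1}Y])=-2d\eta(\phi_{1}X,\phi_{1}Y)$, and $2d\eta(X,Y)=X(\eta(Y))-Y(\eta(X))-\eta([X,Y])$, all the $\xi$-contributions collapse to $(d\eta(X,Y)-d\eta(\phi_{1}X,\phi_{1}Y))\xi$, which is the middle line of \eqref{torsione1}. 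Assembling (a)--(c) gives \eqref{torsione1}, and the identical bookkeeping, with the sign modifications noted above, yields \eqref{torsione2} and \eqref{torsione3}.

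The proof has no conceptual content beyond the single observation that twisting $N^{(1)}_{\phi_{3}}-N^{(1)}_{\phi_{\beta}}$ by $\phi_{\alpha}$ (with $\{\alpha,\beta\}=\{1,2\}$) flips precisely the quadratic bracket terms, so that the leftover brackets in $T^{\alpha}$ repackage as Nijenhuis tensors; the main obstacle is therefore purely the volume of the computation and the careful tracking of signs, especially in the $\alpha=3$ case.
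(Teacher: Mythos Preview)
Your approach is correct and coincides with the paper's own: the paper merely states that the formulas follow from \eqref{espressione1}--\eqref{espressione3} by a straightforward computation, and what you have written is precisely a careful outline of that computation, with the useful organizing observation that $(N^{(1)}_{\phi_{3}}-N^{(1)}_{\phi_{\beta}})(\phi_{\alpha}\cdot,\phi_{\alpha}\cdot)$ produces the needed cancellations.
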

\begin{proof}
The proof follows  from \eqref{espressione1}--\eqref{espressione3}
by a straightforward computation.
\end{proof}

The connections stated in Theorem \ref{connessioni} give rise to a
canonical connection on an almost bi-paracontact manifold that can
be considered as an odd-dimensional counterpart  of the Obata
connection of an anti-hypercomplex (or complex-product) manifold
(cf. \cite{andrada}, \cite{marchiafava}, \cite{santamaria},
\cite{yano2}).

\begin{theorem}\label{connessioneobata}
Let $(M,\eta,\phi_{1},\phi_{2},\phi_{3})$ be an almost
bi-paracontact manifold. There exists a unique linear connection
$\nabla^{c}$ on $M$ such that
\begin{enumerate}
  \item[(i)] $\nabla^{c}\xi=0$,
  \item[(ii)] $\nabla^{c}\phi_{\alpha}=\frac{2}{3}\eta\otimes h_{\alpha}$ for each $\alpha\in\left\{1,2,3\right\}$,
  \item[(iii)]
$T^{c}=d\eta+\frac{1}{3}\bigl(-d\eta(\phi_1\cdot,\phi_1\cdot)-d\eta(\phi_2\cdot,\phi_2\cdot)+d\eta(\phi_3\cdot,\phi_3\cdot)\bigr)+\frac{1}{6}\bigl(-N^{(1)}_{\phi_1}-N^{(1)}_{\phi_2}+N^{(1)}_{\phi_3}\bigr)$.
\end{enumerate}
\end{theorem}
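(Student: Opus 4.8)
The plan is to obtain $\nabla^c$ as the average of the three connections $\nabla^1$, $\nabla^2$, $\nabla^3$ constructed in Theorem \ref{connessioni}. Concretely, I would set
\[
\nabla^c := \frac{1}{3}\bigl(\nabla^1 + \nabla^2 + \nabla^3\bigr),
\]
which is again a linear connection since the $\nabla^\alpha$ differ pairwise by tensors. Property (i) is then immediate: each $\nabla^\alpha\xi = 0$, so $\nabla^c\xi=0$. For (ii), fix $\beta\in\{1,2,3\}$ and add up the three expressions for $\nabla^\alpha\phi_\beta$ given in (ii) of Theorem \ref{connessioni}; one term is $\nabla^\beta\phi_\beta = 0$ and the other two are of the form $\eta\otimes(2h_\beta + (\text{commutator terms}))$. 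The key computational point is that, when the two off-diagonal contributions are summed, the commutator terms cancel by Lemma \ref{lemma1}(b): for instance, for $\beta=1$ one gets $\eta\otimes\bigl(2h_1 + h_2\phi_3 - \phi_3 h_2\bigr)$ from $\nabla^2\phi_1$ and $\eta\otimes\bigl(2h_1 - h_3\phi_2 + \phi_2 h_3\bigr)$ from $\nabla^3\phi_1$, and since $\phi_2 h_3 + h_2\phi_3 = -h_1 = -h_3\phi_2 - \phi_3 h_2$, the sum collapses to $\eta\otimes(2h_1 + 2h_1) = 4\,\eta\otimes h_1$; dividing by $3$ yields $\nabla^c\phi_1 = \frac{2}{3}\eta\otimes h_1$, and similarly for $\beta=2,3$.

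For property (iii), I would use that torsion is linear in the connection, so $T^c = \frac{1}{3}(T^1 + T^2 + T^3)$, and simply add the three formulas from Proposition \ref{torsione7}. The terms $d\eta(X,Y)\xi$ occur once in each $T^\alpha$, summing to $3\,d\eta(X,Y)\xi$, hence contributing $d\eta$ after division by $3$; the terms $-d\eta(\phi_\alpha X,\phi_\alpha Y)\xi$ assemble into $\frac{1}{3}\bigl(-d\eta(\phi_1\cdot,\phi_1\cdot) - d\eta(\phi_2\cdot,\phi_2\cdot) + d\eta(\phi_3\cdot,\phi_3\cdot)\bigr)$ (note the sign flip for $\alpha=3$, matching the statement); the Nijenhuis terms, collected from the three formulas, give $\frac{1}{12}$ times a combination that one checks equals $\frac{1}{6}\bigl(-N^{(1)}_{\phi_1} - N^{(1)}_{\phi_2} + N^{(1)}_{\phi_3}\bigr)$ after dividing by $3$; and finally the $h_\alpha\phi_\alpha$ terms cancel entirely in the sum — e.g. the coefficient of $\eta(X)h_1\phi_1 Y$ is $-2 + 1 + 1 = 0$ — so no such terms survive, consistent with (iii) as stated.

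The remaining, and genuinely delicate, part is \emph{uniqueness}. The strategy mirrors the uniqueness argument in Theorem \ref{connessioni}: suppose $\nabla^c$ and $\nabla'$ both satisfy (i)--(iii) and put $A := \nabla^c - \nabla'$. From (i), $A(X,\xi) = 0$. From (ii), for each $\alpha$ one has $A(X,\phi_\alpha Y) = \phi_\alpha A(X,Y)$ on the contact distribution, since $(\nabla^c_X\phi_\alpha - \nabla'_X\phi_\alpha)Y = 0$ for $X,Y\in\Gamma(\mathcal D)$ (the $\eta\otimes h_\alpha$ terms match and cancel). From (iii), $T^c - T' = 0$, i.e.\ $A(X,Y) = A(Y,X)$ for all $X,Y$. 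The hard step is to extract, from these commutation and symmetry relations alone, that $A\equiv 0$: I would fix $\alpha=1$, decompose along $\mathcal D_1^+ \oplus \mathcal D_1^- \oplus \mathbb{R}\xi$, and argue as follows. For $X\in\Gamma(\mathcal D_1^+)$, $Y\in\Gamma(\mathcal D_1^-)$: since $A$ is symmetric, $A(X,Y)=A(Y,X)$; applying $\phi_2$-covariance ($A(X,\phi_2 Y)=\phi_2 A(X,Y)$ with $\phi_2 Y\in\Gamma(\mathcal D_1^+)$) together with the symmetry and the fact that $\nabla^c$, $\nabla'$ need not preserve $\mathcal D_1^\pm$ here forces, after a short chase analogous to \eqref{unicita0}--\eqref{unicita3}, $A(X,Y)=0$; then $\phi_2$-covariance again extends this to $A(X,X')=0$ for $X,X'\in\Gamma(\mathcal D_1^+)$ and to $A(Y,Y')=0$ on $\mathcal D_1^-$; finally one handles $A(\xi,Y)$ using (i) and (iii) to write $A(\xi,Y)$ in terms of torsion differences, which vanish. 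I expect the main obstacle to be precisely this uniqueness chase: unlike the $\nabla^\alpha$, the connection $\nabla^c$ does not preserve any of the eigendistributions individually, so one cannot directly conclude that $A(X,Y)$ lands in a complementary subbundle; instead one must combine the $\phi_1$-, $\phi_2$- and $\phi_3$-covariance of $A$ simultaneously, exploiting that these three $(1,1)$-tensors generate (together with $I$) a pointwise algebra acting transitively enough on $\mathcal D$ to pin $A$ down to zero.
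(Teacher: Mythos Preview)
Your existence argument is exactly the paper's: define $\nabla^c$ as the barycenter $\tfrac13(\nabla^1+\nabla^2+\nabla^3)$, verify (i) trivially, (ii) via Lemma~\ref{lemma1}(b), and (iii) by summing the formulas in Proposition~\ref{torsione7}. That part is fine.

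For uniqueness your sketch is close but rests on a misconception that makes the argument look harder than it is. You write that ``$\nabla^c$, $\nabla'$ need not preserve $\mathcal D_1^\pm$'', and later that ``$\nabla^c$ does not preserve any of the eigendistributions individually''. This is false in the only directions that matter: for $X\in\Gamma(\mathcal D)$ condition (ii) gives $(\nabla^c_X\phi_\alpha)Y=\tfrac23\eta(X)h_\alpha Y=0$, so $\nabla^c_X$ (and likewise $\nabla'_X$) \emph{does} preserve each $\mathcal D_\alpha^\pm$. Once you see this, the chase you propose along the lines of \eqref{unicita0}--\eqref{unicita3} goes through verbatim: for $X\in\Gamma(\mathcal D_1^+)$, $Y\in\Gamma(\mathcal D_1^-)$ symmetry of $A$ gives $A(X,Y)=A(Y,X)\in\Gamma(\mathcal D_1^+)\cap\Gamma(\mathcal D_1^-)=0$, then $\phi_2$-covariance kills the remaining cases on $\mathcal D$, and $A(\xi,\cdot)=A(\cdot,\xi)=0$ follows directly from symmetry and (i) (no torsion computation needed). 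So there is no genuine obstacle, only a self-imposed one.

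The paper's uniqueness argument is organised differently and is worth knowing. From symmetry and $\phi_\alpha$-covariance one gets $A(\phi_\alpha X,Y)=A(X,\phi_\alpha Y)$ for $\alpha=1,2$; then
\[
A(\phi_1 X,\phi_2 Y)=\phi_1\phi_2 A(X,Y)=\phi_3 A(X,Y)
\quad\text{and}\quad
A(\phi_1 X,\phi_2 Y)=\phi_2\phi_1 A(X,Y)=-\phi_3 A(X,Y),
\]
so $\phi_3 A(X,Y)=0$, whence $A(X,Y)\in\mathbb R\xi$. Since for horizontal $X$ the connections preserve $\mathcal D$, one has $\eta(A(X,Y))=0$ on $\mathcal D$, and the $\xi$-slots vanish as above; hence $A\equiv 0$. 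This avoids any eigenspace decomposition and uses the anticommutation $\phi_1\phi_2=-\phi_2\phi_1$ directly, which is the structural point.
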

\begin{proof}
We first prove the uniqueness of a linear connection satisfying the
conditions (i), (ii) and (iii). Let $\nabla$ and $\nabla'$ be two
linear connections satisfying (i), (ii), (iii). Let us define the
tensor $A:=\nabla-\nabla'$. Because the expressions of the torsion
tensor fields of $\nabla$ and $\nabla'$ coincide, one has
immediately that $A(X,Y)=A(Y,X)$ for all $X,Y\in\Gamma(TM)$. Hence
$A$ is symmetric. Then, due to (ii), one has
$A(X,\phi_{1}Y)=\phi_{1}A(X,Y)=\phi_{1}A(Y,X)=A(Y,\phi_{1}X)=A(\phi_{1}X,Y)$
and, analogously, $A(X,\phi_{2}Y)=\phi_{2}A(X,Y)=A(\phi_{2}X,Y)$.
Therefore
\begin{equation}\label{comp1}
A(\phi_{1}X,\phi_{2}Y)=\phi_{1}A(X,\phi_{2}Y)=\phi_{1}\phi_{2}A(X,Y)=\phi_{3}A(X,Y).
\end{equation}
On the other hand
\begin{equation}\label{comp2}
A(\phi_{1}X,\phi_{2}Y)=\phi_{2}A(\phi_{1}X,Y)=\phi_{2}\phi_{1}A(X,Y)=-\phi_{3}A(X,Y).
\end{equation}
Thus comparing \eqref{comp1} and \eqref{comp2} we get
$\phi_{3}A(X,Y)=-\phi_{3}A(X,Y)$. Applying $\phi_3$ to both the
sides of the previous identity we obtain
\begin{equation}\label{comp3}
-A(X,Y)+\eta(A(X,Y))\xi=A(X,Y)-\eta(A(X,Y))\xi.
\end{equation}
Notice that as, for each $Z\in\Gamma({\mathcal D})$, $\nabla_{Z}$
and $\nabla'_{Z}$ preserve $\phi_1$, they also preserve the
eigendistributions ${\mathcal D}_{1}^{\pm}$ and hence the contact
distribution ${\mathcal D}={\mathcal D}_{1}^{+}\oplus{\mathcal
D}_{1}^{-}$. This implies that $\eta(A(X,Y))=0$ whenever
$X,Y\in\Gamma({\mathcal D})$. Moreover, $A(X,\xi)=0$ and
$A(\xi,Y)=A(\xi,\phi_{1}^{2}Y)=A(\phi_{1}\xi,\phi_{1}Y)=0$.
Consequently \eqref{comp3} yields that $A$ is anti-symmetric. Since
it is also symmetric, it necessarily vanishes identically. This
proves that $\nabla=\nabla'$.

In order to define a (necessarily unique) linear connection
satisfying the conditions (i), (ii), (iii), we consider the
barycenter of the canonical connections $\nabla^1$, $\nabla^2$,
$\nabla^3$ stated in Theorem \ref{connessioni}. Thus we define, for
all $X,Y\in\Gamma(TM)$,
\begin{equation*}
\nabla^{c}_{X}Y:=\frac{1}{3}\left(\nabla^{1}_{X}Y+\nabla^{2}_{X}Y+\nabla^{3}_{X}Y\right).
\end{equation*}
We have immediately that $\nabla^{c}\xi=0$. By the expressions in
(ii) of Theorem \ref{connessioni} and by (b) of Lemma \ref{lemma1}
we have
\begin{equation*}
\nabla^{c}\phi_{1}=\frac{1}{3}\left(\nabla^{2}\phi_1+\nabla^{3}\phi_1\right)=\frac{1}{3}\eta\otimes\left(2h_{1}+h_{2}\phi_{3}-\phi_{3}h_{2}+2h_{1}-h_{3}\phi_{2}+\phi_{2}h_{3}\right)=\frac{2}{3}\eta\otimes
h_{1}
\end{equation*}
and, analogously, $\nabla^{c}\phi_{2}=\frac{2}{3}\eta\otimes h_2$,
$\nabla^{c}\phi_{3}=\frac{2}{3}\eta\otimes h_3$. Using
\eqref{torsione1}--\eqref{torsione3} we can easily find the
expression of the torsion of $\nabla^{c}$:
\begin{align}
T^{c}(X,Y)&=T^{1}(X,Y)+T^{2}(X,Y)+T^{3}(X,Y)\nonumber\\
&=d\eta(X,Y)\xi+\frac{1}{3}\left(-d\eta(\phi_{1}X,\phi_{1}Y)-d\eta(\phi_{2}X,\phi_{2}Y)+d\eta(\phi_{3}X,\phi_{3}Y)\right)\xi\label{torsioneobata2}\\
&\quad+\frac{1}{6}\left(-N^{(1)}_{\phi_1}(X,Y)-N^{(1)}_{\phi_2}(X,Y)+N^{(1)}_{\phi_{3}}(X,Y)\right).\nonumber
\end{align}
\end{proof}

The unique connection $\nabla^{c}$ stated in Theorem
\ref{connessioneobata} will be called the \emph{canonical
connection} of the almost bi-paracontact manifold
$(M,\eta,\phi_{1},\phi_{2},\phi_{3})$. Using
\eqref{espressione1}--\eqref{espressione3}, after a long
computation, one finds that the explicit expression of $\nabla^{c}$
is the following:
\begin{align*}
\nabla^{c}_{X}Y&=\frac{1}{12}\bigl(3[X,Y]-[\phi_{1}X,\phi_{1}Y]-[\phi_{2}X,\phi_{2}Y]+[\phi_{3}X,\phi_{3}Y]+3\phi_{1}[X,\phi_{1}Y]+3\phi_{2}[X,\phi_{2}Y]\\
&\quad-3\phi_{3}[X,\phi_{3}Y]-\phi_{1}[\phi_{1}X,Y]-\phi_{2}[\phi_{2}X,Y]+\phi_{3}[\phi_{3}X,Y]+\phi_{1}[\phi_{2}X,\phi_{3}Y]\\
&\quad-\phi_{1}[\phi_{3}X,\phi_{2}Y]-\phi_{2}[\phi_{1}X,\phi_{3}Y]+\phi_{2}[\phi_{3}X,\phi_{1}Y]+\phi_{3}[\phi_{1}X,\phi_{2}Y]-\phi_{3}[\phi_{2}X,\phi_{1}Y]\\
&\quad+2\eta(X)(h_{1}\phi_{1}Y+h_{2}\phi_{2}Y-h_{3}\phi_{3}Y)+2\eta(Y)(h_{1}\phi_{1}X+h_{2}\phi_{2}X-h_{3}\phi_{3}X)\\
&\quad+\left(\eta([\phi_{1}X,\phi_{1}Y])+\eta([\phi_{2}X,\phi_{2}Y])-\eta([\phi_{3}X,\phi_{3}Y])-3\eta([X,Y])\right)\xi\bigr)+X(\eta(Y))\xi.
\end{align*}

\begin{corollary}
Let $(M,\eta,\phi_1,\phi_2,\phi_3)$ be a normal almost
bi-paracontact manifold.
\begin{enumerate}
  \item[1.] There exists a unique linear connection
$\nabla^{c}$ on $M$ preserving the almost bi-paracontact structure
and whose torsion is given by
\begin{equation}\label{torsionesemplificata}
T^{c}=2d\eta\otimes\xi.
\end{equation}
  \item[2.] The curvature tensor field of $\nabla^{c}$ satisfies
\begin{equation}\label{curvatura1}
R^{c}(\phi_{1}\cdot,\phi_{1}\cdot)=R^{c}(\phi_{2}\cdot,\phi_{2}\cdot)=-R^{c}(\phi_{3}\cdot,\phi_{3}\cdot)=-R^{c}.
\end{equation}
In particular, for all $X\in\Gamma(TM)$
\begin{equation}\label{curvatura4}
R^{c}(X,\xi)=0
\end{equation}
  \item[3.] The Ricci tensor of $\nabla^{c}$, defined as \
$\emph{Ric}^{c}(X,Y):=\emph{trace}(Z\mapsto R^{c}(Z,X)Y)$, is given
by
\begin{equation}\label{ricci1}
\emph{Ric}^{c}(X,Y)=-\frac{1}{2}\emph{trace}(R^{c}(X,Y)).
\end{equation}
In particular, the Ricci tensor is skew-symmetric and \
$\emph{Ric}^{c}(\phi_{1}\cdot,\phi_{1}\cdot)=\emph{Ric}^{c}(\phi_{2}\cdot,\phi_{2}\cdot)$
$=-\emph{Ric}^{c}(\phi_{3}\cdot,\phi_{3}\cdot)=\emph{Ric}^{c}$.
 \item[4.] The connection $\nabla^{c}$ and the connections
$\nabla^{1}$, $\nabla^{2}$, $\nabla^{3}$ coincide.
\end{enumerate}
\end{corollary}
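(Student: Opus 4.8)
The idea is that under the normality hypothesis $N^{(1)}_{\phi_1} = N^{(1)}_{\phi_2} = N^{(1)}_{\phi_3} = 0$, all the ``correction terms'' in the preceding theorems collapse, so the three connections $\nabla^1$, $\nabla^2$, $\nabla^3$ of Theorem \ref{connessioni} become identical, equal to $\nabla^c$, and simultaneously preserve the whole triplet $(\phi_1,\phi_2,\phi_3)$. I would organize the proof around establishing Part 4 first (or equivalently, that $\nabla^c$ preserves all three structures), since Parts 1--3 follow from it by routine manipulation.

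For Part 1, I would first note that by Proposition \ref{normality} normality implies in particular that $N^{(3)}_{\phi_1} = N^{(3)}_{\phi_2} = 0$, i.e.\ $h_1 = h_2 = 0$, and then by (b) of Lemma \ref{lemma1} also $h_3 = \phi_1 h_2 + h_1 \phi_2 = 0$. Feeding $h_1 = h_2 = h_3 = 0$ and $N^{(1)}_{\phi_\alpha} = 0$ into the conditions (ii) of Theorem \ref{connessioni} gives $\nabla^\alpha \phi_\beta = 0$ for \emph{all} $\alpha, \beta \in \{1,2,3\}$; in particular each $\nabla^\alpha$ preserves the whole almost bi-paracontact structure. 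By the uniqueness clause in Theorem \ref{connessioneobata} (the correction terms in (iii) there also vanish, leaving exactly $T^c = 2\,d\eta \otimes \xi$ after using \eqref{legendrian}, which holds since normality implies the structure is integrable hence Legendrian), this forces $\nabla^1 = \nabla^2 = \nabla^3 = \nabla^c$, proving Part 4, and also \eqref{torsionesemplificata}. Uniqueness of a structure-preserving connection with this torsion is then immediate from the uniqueness already proved in Theorem \ref{connessioneobata}.

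For Part 2, since $\nabla^c \phi_\alpha = \frac{2}{3}\eta \otimes h_\alpha = 0$ for each $\alpha$, the connection genuinely parallelizes $\phi_1, \phi_2, \phi_3$, and the standard Ricci-identity argument applies: from $\nabla^c \phi_1 = 0$ one gets $R^c(X,Y)\phi_1 = \phi_1 R^c(X,Y)$ (as endomorphisms), and then replacing $(X,Y)$ by $(\phi_1 X, \phi_1 Y)$ together with $\phi_1^2 = I - \eta\otimes\xi$ and the already-established $R^c(\cdot,\xi) = 0$ (which itself follows from $\nabla^c \xi = 0$ and $\nabla^c d\eta = 0$, the latter because $\nabla^c$ preserves $\phi_1$ and $\xi$ hence $d\eta = g_1(\cdot,\phi_1\cdot)$-type data, or more directly from $T^c = 2d\eta\otimes\xi$ being parallel) yields $R^c(\phi_1 X, \phi_1 Y) = -R^c(X,Y)$; the same for $\phi_2$, and for $\phi_3 = \phi_1\phi_2$ the two sign flips compose to give $R^c(\phi_3 X, \phi_3 Y) = R^c(X,Y)$. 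Setting $Y = \xi$ gives \eqref{curvatura4}.

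For Part 3, I would use that $\nabla^c$ preserves the volume-type form built from $\eta$ and $d\eta$ — more precisely, $\nabla^c(\eta \wedge (d\eta)^n) = 0$ since $\nabla^c\eta = 0$ and $\nabla^c d\eta = 0$, so the curvature acts trivially on this top form, which forces $\mathrm{trace}(R^c(X,Y)) = 0$ as a $2$-form argument; combined with the first Bianchi identity for a connection with parallel torsion (the torsion $2d\eta\otimes\xi$ satisfies $\nabla^c T^c = 0$ and $R^c(\cdot,\xi) = 0$, so the Bianchi sum reduces appropriately) one derives the relation $\mathrm{Ric}^c(X,Y) = -\frac{1}{2}\mathrm{trace}(R^c(X,Y))$, whence skew-symmetry is automatic, and the type-$(1,1)$ property of $\mathrm{Ric}^c$ is inherited directly from \eqref{curvatura1}. \emph{The main obstacle} I anticipate is Part 3: getting the precise constant $-\frac{1}{2}$ in \eqref{ricci1} requires carefully tracking the Bianchi identity in the presence of the (parallel, but nonzero) torsion $T^c = 2d\eta\otimes\xi$, rather than the torsion-free case, and one must check that the $\xi$-components genuinely drop out using $R^c(\cdot,\xi) = 0$ and $i_\xi d\eta = 0$.
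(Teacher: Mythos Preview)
Your treatment of Parts 1 and 4 is correct and matches the paper: normality gives $h_1=h_2=h_3=0$, so condition (ii) of Theorem \ref{connessioni} collapses to $\nabla^\alpha\phi_\beta=0$ for all $\alpha,\beta$, and the torsion in Theorem \ref{connessioneobata} simplifies via \eqref{legendrian} to $2d\eta\otimes\xi$; uniqueness then forces all four connections to agree.

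However, your argument for Part 2 has a genuine gap. The Ricci identity $R^c(X,Y)\circ\phi_1=\phi_1\circ R^c(X,Y)$ is a statement about the \emph{third} slot of the curvature; replacing $(X,Y)$ by $(\phi_1 X,\phi_1 Y)$ in it simply reproduces the same identity and gives no relation whatsoever between $R^c(\phi_1 X,\phi_1 Y)$ and $R^c(X,Y)$. There is no ``standard Ricci-identity argument'' that produces the type property in the first two arguments from parallelism alone. The paper instead uses the first Bianchi identity with torsion: for $X,Y\in\Gamma(\mathcal D_\alpha^+)$ and $Z\in\Gamma(\mathcal D_\alpha^-)$ the Bianchi sum lies in $\mathcal D$ while the torsion terms lie along $\xi$, so both vanish; then $R^c(X,Y)Z\in\Gamma(\mathcal D_\alpha^-)$ while $-R^c(Y,Z)X-R^c(Z,X)Y\in\Gamma(\mathcal D_\alpha^+)$, forcing $R^c(X,Y)Z=0$, and commutation with $\phi_\beta$ extends this to $R^c(X,Y)=0$ for $X,Y$ in the same eigendistribution. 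The identity \eqref{curvatura1} is then immediate on each piece of the splitting. Your derivation of $R^c(X,\xi)=0$ is similarly unjustified: $\nabla^c\xi=0$ gives only $R^c(X,Y)\xi=0$, not $R^c(X,\xi)Z=0$; the paper again uses Bianchi (and $\mathcal L_\xi d\eta=0$) for this.

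Your Part 3 sketch is also off target. The paper does not argue via a parallel volume form; it computes the trace directly in an adapted basis $\{E_i,\phi_2 E_i,\xi\}$ with $E_i\in\Gamma(\mathcal D_1^+)$, using the vanishing of $R^c$ on pairs from the same eigendistribution (established in Part 2) and the Bianchi relation $R^c(E_{n+i},X)Y=-R^c(X,Y)E_{n+i}$ to reduce $\mathrm{Ric}^c$ to $-\tfrac12\,\mathrm{trace}\,R^c(X,Y)$. Note also that you have not established $\nabla^c d\eta=0$ in the general normal case (the paper proves it only later, for $(\kappa,\mu)$-spaces), so the volume-form route is not available without further work.
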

\begin{proof}
1. As in any normal almost bi-paracontact manifold the tensor fields
$h_{1}$, $h_{2}$, $h_{3}$ vanish identically, by (ii) of Theorem
\ref{connessioneobata}, $\nabla^{c}$ preserves the tensor fields
$\phi_{1}$, $\phi_{2}$, $\phi_{3}$. Moreover, by \eqref{legendrian}
the expression \eqref{torsioneobata2} of the torsion simplifies in
\eqref{torsionesemplificata}.\\
2. First of all notice that, since $\nabla^{c}\phi_{\alpha}=0$, for
each $\alpha\in\left\{1,2,3\right\}$ we have
\begin{equation}\label{curvatura2}
R^{c}(X,Y)\circ\phi_{\alpha}=\phi_\alpha\circ R^{c}(X,Y).
\end{equation}
for all $X,Y\in\Gamma(TM)$. Now the Bianchi identity yields
\begin{align}\label{bianchi1}
R^{c}(X,Y)Z+R^{c}(Y,Z)X+R^{c}(Z,X)Y&=T^{c}(T^{c}(X,Y),Z)+(\nabla^{c}_{X}T^{c})(Y,Z)+T^{c}(T^{c}(Y,Z),X)\nonumber\\
&\quad+(\nabla^{c}_{Y}T^{c})(Z,X)+T^{c}(T^{c}(Z,X),Y)+(\nabla^{c}_{Z}T^{c})(X,Y).
\end{align}
We examine the terms in the right-hand-side of \eqref{bianchi1}.
Notice that, by \eqref{torsionesemplificata},
$T^{c}(T^{c}(X,Y),Z)=4{d\eta(X,Y)}{d\eta(\xi,Z)}\xi=0$ and
\begin{align*}
(\nabla^{c}_{X}T^{c})(Y,Z)&=\nabla^{c}_{X}(2d\eta(Y,Z)\xi)-2d\eta(\nabla^{c}_{X}Y,Z)\xi-2d\eta(Y,\nabla^{c}_{X}Z)\xi\\
&=2X(d\eta(Y,Z))\xi+2d\eta(Y,Z)\nabla^{c}_{X}\xi-2d\eta(\nabla^{c}_{X}Y,Z)\xi-2d\eta(Y,\nabla^{c}_{X}Z)\xi\\
&=2(\nabla^{c}_{X}d\eta)(Y,Z)\xi.
\end{align*}
Hence \eqref{bianchi1} simplifies in
\begin{align}\label{bianchi2}
R^{c}(X,Y)Z+R^{c}(Y,Z)X+R^{c}(Z,X)Y&=2\bigl((\nabla^{c}_{X}d\eta)(Y,Z)+(\nabla^{c}_{Y}d\eta)(Z,X)\nonumber\\
&\quad\quad+(\nabla^{c}_{Z}d\eta)(X,Y)\bigr)\xi.
\end{align}
Now in \eqref{bianchi2} consider $X,Y\in\Gamma({\mathcal
D}_{\alpha}^{+})$ and $Z\in\Gamma({\mathcal D}_{\alpha}^{-})$,
$\alpha\in\left\{1,2\right\}$. Then, as $\nabla^{c}$ preserves the
contact distribution, the left-hand-side of \eqref{bianchi2} is
tangent to $\mathcal D$ whereas the right-hand-side is transversal
to $\mathcal D$. Hence they both vanish. Thus, in particular
\begin{equation}\label{bianchi3}
R^{c}(X,Y)Z=-R^{c}(Y,Z)X-R^{c}(Z,X)Y.
\end{equation}
But the left-hand-side of \eqref{bianchi3} is a section of
${\mathcal D}_{\alpha}^{-}$, whereas the right-hand-side is a
section of ${\mathcal D}_{\alpha}^{+}$. Consequently,
$R^{c}(X,Y)Z=0$ for all $X,Y\in\Gamma({\mathcal D}_{\alpha}^{+})$
and $Z\in\Gamma({\mathcal D}_{\alpha}^{-})$. Since by Proposition
\ref{proprieta1}, for any $\beta\neq\alpha$, $\phi_\beta$ maps
${\mathcal D}_{\alpha}^{-}$ onto ${\mathcal D}_{\alpha}^{+}$,
applying \eqref{curvatura2} we get that $R^{c}(X,Y)Z=0$ also for
$Z\in\Gamma({\mathcal D}_{\alpha}^{+})$. Moreover, obviously,
$R^{c}(X,Y)\xi=0$, so that we can conclude that
\begin{equation}\label{curvatura3}
R^{c}(X,Y)=0
\end{equation}
for any $X,Y\in\Gamma({\mathcal D}_{\alpha}^{+})$. In a similar way
one can prove that \eqref{curvatura3} holds for
$X,Y\in\Gamma({\mathcal D}_{\alpha}^{-})$. Thus in both cases the
relation $R^{c}(\phi_{\alpha}X,\phi_{\alpha}Y)=-R^{c}(X,Y)$,
$\alpha\in\left\{1,2\right\}$, is trivially satisfied. Moreover, if
$X\in\Gamma({\mathcal D}_{\alpha}^{+})$ and $Y\in\Gamma({\mathcal
D}_{\alpha}^{-})$,
$R^{c}(\phi_{\alpha}X,\phi_{\alpha}Y)=R^{c}(X,-Y)=-R^{c}(X,Y)$. In
order to complete the proof in the case
$\alpha\in\left\{1,2\right\}$ it remains to prove that
$R^{c}(X,\xi)=0$ for any $X\in\Gamma({\mathcal D})$. Notice that, as
$\nabla^{c}\xi=0$ and $T^{c}(X,\xi)=2d\eta(X,\xi)=0$,
$\nabla^{c}_{\xi}X=[\xi,X]$. By applying again the Bianchi identity
\eqref{bianchi1} we obtain, for all $Z\in\Gamma({\mathcal D})$,
\begin{align*}
R^{c}(X,\xi)Z+R^{c}(\xi,Z)X&=(\nabla^{c}_{\xi}T^{c})(Z,X)\\
&=\nabla^{c}_{\xi}(T^{c}(Z,X))-T^{c}([\xi,Z],X)-T^{c}(Z,[\xi,X])\\
&=2({\mathcal L}_{\xi}d\eta)(Z,X) \xi=0.
\end{align*}
Consequently $R^{c}(X,\xi)Z=-R^{c}(\xi,Z)X$. If in the last equality
we take $X\in\Gamma({\mathcal D}_{\alpha}^{+})$ and
$Z\in\Gamma({\mathcal D}_{\alpha}^{-})$, the left-hand-side is a
section of ${\mathcal D}_{\alpha}^{-}$ while the right-hand-side is
a section of ${\mathcal D}_{\alpha}^{+}$. Thus they both vanish and
taking \eqref{curvatura2} into account we conclude that
$R^{c}(X,\xi)=0$ for all $X\in\Gamma({\mathcal D})$. Finally, for
any $X,Y\in\Gamma(TM)$,
$R^{c}(\phi_{3}X,\phi_{3}Y)=R^{c}(\phi_{1}\phi_{2}X,\phi_{1}\phi_{2}Y)=-R^{c}(\phi_{2}X,\phi_{2}Y)=R^{c}(X,Y)$.\\
3. For simplifying the notation, let $r_{XY}$ denote the
endomorphism $Z\mapsto R^{c}(Z,X)Y$, so that
$\textrm{Ric}^{c}(X,Y)=\textrm{trace}(r_{XY})$. From
\eqref{curvatura1} it follows immediately that $r_{XY}(\xi)=0$. Let
$\{E_{1},\ldots,E_{n},E_{n+1},\ldots,E_{2n},\xi\}$ be a local basis
such that, for each $i\in\left\{1,\ldots,n\right\}$,
$E_{i}\in\Gamma({\mathcal D}_{1}^{+})$ and
$E_{n+i}=\phi_{2}E_{i}\in\Gamma({\mathcal D}_{1}^{-})$.  In order to
prove \eqref{ricci1} we distinguish the cases (i)
$X,Y\in\Gamma({\mathcal D}_{1}^{+})$, (ii) $X,Y\in\Gamma({\mathcal
D}_{1}^{-})$, (iii) $X\in\Gamma({\mathcal D}_{1}^{+})$,
$Y\in\Gamma({\mathcal D}_{1}^{-})$, (iv) $X\in\Gamma(TM)$, $Y=\xi$.
In the first case, due to \eqref{curvatura3},
$r_{XY}(E_{i})=R^{c}(E_{i},X)Y=0$. Moreover,
$r_{XY}(E_{n+i})=R^{c}(E_{n+i},X)Y\in\Gamma({\mathcal D}_{1}^{+})$
so that it has no components along the direction of
$E_{n+1},\ldots,E_{2n},\xi$. Hence
$\textrm{Ric}^{c}(X,Y)=\textrm{trace}(r_{XY})=0$. On the other hand,
since $R^{c}(X,Y)=0$, also the right-hand-side of \eqref{ricci1}
vanishes. The case (ii) being analogous, we pass to the case (iii).
First of all, by \eqref{curvatura3}, $r_{XY}(E_i)=R^{c}(E_i,X)Y=0$.
Next, by the Bianchi identity used before,
$r_{XY}(E_{n+i})=R^{c}(E_{n+i},X)Y=-R^{c}(X,Y)E_{n+i}-R^{c}(Y,E_{n+i})X=-R^{c}(X,Y)E_{n+i}$,
as $R^{c}(Y,E_{n+i})=0$. Since
$R^{c}(X,Y)E_{n+i}=R^{c}(X,Y)\phi_{1}E_{i}=\phi_{1}(R^{c}(X,Y)E_{i})$,
we conclude that
$\textrm{trace}(r_{XY})=-\frac{1}{2}\textrm{trace}R^{c}(X,Y)$. The
last case is obvious since, due to \eqref{curvatura1},
$\textrm{Ric}^{c}(X,\xi)=0=-\frac{1}{2}\textrm{trace}(R^{c}(X,\xi))$.\\
4. Proposition \ref{torsione7}, \eqref{torsionesemplificata} and the
normality of the almost bi-paracontact structure imply that
$T^{1}(X,Y)=T^{2}(X,Y)=T^{3}(X,Y)=2d\eta(X,Y)\xi=T^{c}(X,Y)$ for all
$X,Y\in\Gamma(TM)$. Moreover, according to (ii) of Theorem
\ref{connessioni}, because of the vanishing of the tensor fields
$h_{1}$, $h_{2}$, $h_{3}$, each connection $\nabla^{1}$,
$\nabla^{2}$, $\nabla^{3}$ preserves the tensor fields $\phi_{1}$,
$\phi_{2}$, $\phi_{3}$. Consequently, for each
$\alpha\in\left\{1,2,3\right\}$, $\nabla^{\alpha}$ fulfils all the
conditions of Theorem \ref{connessioneobata} and hence coincides
with $\nabla^{c}$.
\end{proof}

\begin{corollary}
Every normal almost bi-paracontact manifold carries four mutually
transverse Legendre foliations whose leaves are totally geodesic and
admit an affine structure.
\end{corollary}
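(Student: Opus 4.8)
The plan is to produce, on each of the four eigendistribution-foliations, an affine connection with the required properties by restricting the canonical connection $\nabla^{c}$ of Theorem \ref{connessioneobata}.

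First I would observe that, by Proposition \ref{normality}, normality forces integrability, so that the four eigendistributions ${\mathcal D}_{1}^{+}$, ${\mathcal D}_{1}^{-}$, ${\mathcal D}_{2}^{+}$, ${\mathcal D}_{2}^{-}$ are Legendre foliations of $(M,\eta)$. Their pairwise transversality is nothing but item~5 of Proposition \ref{proprieta1}, which yields $TM={\mathcal D}_{\alpha}^{+}\oplus{\mathcal D}_{\alpha}^{-}\oplus\mathbb{R}\xi$ and $TM={\mathcal D}_{\alpha}^{\pm}\oplus{\mathcal D}_{\beta}^{\pm}\oplus\mathbb{R}\xi$ for $\alpha\neq\beta$ and all choices of signs; hence any two of the four distributions meet only in the zero section and, together with $\mathbb{R}\xi$, span $TM$.

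Next I would use the fact that in a normal almost bi-paracontact manifold the operators $h_{1}$, $h_{2}$, $h_{3}$ vanish identically, so that, by (ii) of Theorem \ref{connessioneobata}, $\nabla^{c}\phi_{\alpha}=0$ for each $\alpha\in\{1,2,3\}$. Consequently $\nabla^{c}$ preserves every eigendistribution ${\mathcal D}_{\alpha}^{\pm}$: for $X\in\Gamma(TM)$ and $Y\in\Gamma({\mathcal D}_{\alpha}^{\pm})$ one has $\nabla^{c}_{X}Y\in\Gamma({\mathcal D}_{\alpha}^{\pm})$. In particular, if $L$ is a leaf of one of the four foliations, then $\nabla^{c}_{X}Y\in\Gamma(TL)$ for all $X,Y\in\Gamma(TL)$, i.e. $L$ is totally geodesic with respect to $\nabla^{c}$, and $\nabla^{c}$ restricts to a well-defined linear connection $\nabla^{L}$ on $L$.

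Finally I would check that $\nabla^{L}$ is torsion-free and flat, which is exactly what endows $L$ with an affine structure (a flat torsion-free affine connection admits, near every point, local coordinates in which it is the standard flat connection, so that the corresponding coordinate atlas has affine transition functions). Torsion-freeness of $\nabla^{L}$ is immediate from \eqref{torsionesemplificata}, $T^{c}=2d\eta\otimes\xi$, since $d\eta$ vanishes identically on any Legendre distribution. Flatness of $\nabla^{L}$ is precisely the identity \eqref{curvatura3} established in the proof of the preceding corollary, namely $R^{c}(X,Y)=0$ for all $X,Y\in\Gamma({\mathcal D}_{\alpha}^{\pm})$, $\alpha\in\{1,2\}$. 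This gives the claim. In truth all the substantive work has already been carried out --- the curvature identity \eqref{curvatura3} is the essential input --- so the only points requiring (minor) care are the well-definedness of the restricted connection $\nabla^{L}$ and the passage from ``flat torsion-free connection'' to ``affine structure'', neither of which presents a genuine obstacle.
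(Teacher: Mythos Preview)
Your proposal is correct and follows essentially the same approach as the paper: normality gives integrability (hence four Legendre foliations, mutually transverse by Proposition~\ref{proprieta1}), $\nabla^{c}\phi_{\alpha}=0$ makes the leaves auto-parallel (totally geodesic), and the vanishing of $T^{c}$ and $R^{c}$ along each eigendistribution, via \eqref{torsionesemplificata} and \eqref{curvatura3}/\eqref{curvatura1}, yields the flat torsion-free restricted connection and thus the affine structure. Your write-up is slightly more detailed than the paper's in justifying transversality and the passage from flatness to an affine structure, but the substance is identical.
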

\begin{proof}
Since the almost bi-paracontact structure is normal, it is in
particular integrable, so that the eigendistributions $\mathcal
D_{1}^{+},\mathcal D_{1}^{-},\mathcal D_{2}^{+}, \mathcal D_{2}^{-}$
define four mutually transverse Legendre foliations on the manifold.
The leaves of these foliations are auto-parallel with respect to the
canonical connection $\nabla^{c}$, so that they are totally
geodesic. Finally, for each $\alpha\in\left\{1,2\right\}$, for any
$X,X'\in\Gamma({\mathcal D}_{\alpha}^{\pm})$ we have, by
\eqref{torsionesemplificata}, $T^{c}(X,X')=0$ and,  by
\eqref{curvatura1}, $R^{c}(X,X')=0$. Thus $\nabla^{c}$ induces a
flat, torsion-free connection on the leaves of the foliations
$\mathcal D_{1}^{+},\mathcal D_{1}^{-},\mathcal D_{2}^{+}, \mathcal
D_{2}^{-}$.
\end{proof}

We conclude the section by studying the transverse geometry of a
normal almost bi-paracontact manifold with respect to the Reeb
foliation. We show in fact that the space of leaves of a normal
almost bi-paracontact manifold is anti-hypercomplex (see
\cite{marchiafava} or, with different names, \cite{andrada},
\cite{santamaria}, \cite{yano2}). We recall that an
\emph{anti-hypercomplex structure} on an even dimensional manifold
is given by two anti-commuting product structures $I$, $J$ and a
complex structure $K$ such that $IJ=K$. Then one can prove that the
manifold admits a canonical connection, usually called the
\emph{Obata connection}, defined as the unique torsion-free
connection preserving the anti-hypercomplex structure.

\begin{theorem}
Let $(M,\phi_{1},\phi_{2},\phi_{3})$ be a normal almost
bi-paracontact manifold. Then the $1$-dimensional foliation defined
by the Reeb vector field $\xi$ is transversely anti-hypercomplex.
Furthermore, the canonical connection $\nabla^{c}$ is (locally)
projectable to the Obata connection defined on the leaf space.
\end{theorem}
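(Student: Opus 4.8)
The plan is to realise the (local) leaf space $\bar M$ of the Reeb foliation $\mathcal F_\xi$ as a $2n$-dimensional manifold, to show that $\phi_1,\phi_2,\phi_3$ and the canonical connection $\nabla^c$ all descend to $\bar M$, and to identify the descended objects. The key starting point is that normality forces $h_1=h_2=h_3=0$, whence $\mathcal L_\xi\phi_\alpha=2h_\alpha=0$ for each $\alpha$; combined with $\mathcal L_\xi\eta=0$ (which follows from \eqref{contatto4}) and with $\phi_\alpha\xi=0$, $\eta\circ\phi_\alpha=0$, this shows that each $\phi_\alpha$ preserves $\mathcal D=\ker\eta$ and carries $\xi$-basic vector fields (those commuting with $\xi$) to $\xi$-basic vector fields. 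Fixing a foliated chart with leaf projection $\pi\colon M\to\bar M$, so that $\pi_*$ identifies $\mathcal D_x$ with $T_{\pi(x)}\bar M$, one then obtains well-defined tensors $\bar\phi_1,\bar\phi_2,\bar\phi_3$ on $\bar M$ that are $\pi$-related to $\phi_1,\phi_2,\phi_3$. Their pointwise algebra $\bar\phi_1^2=\bar\phi_2^2=I$, $\bar\phi_3^2=-I$, $\bar\phi_1\bar\phi_2=-\bar\phi_2\bar\phi_1=\bar\phi_3$, and the fact that the $\pm1$-eigenbundles of $\bar\phi_1$ and $\bar\phi_2$ are $n$-dimensional (Proposition \ref{proprieta1}), follow immediately by restriction from $M$.

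I would then establish integrability of $(\bar\phi_1,\bar\phi_2,\bar\phi_3)$. By normality $N^{(1)}_{\phi_\alpha}=0$, so by \eqref{n1} and \eqref{n1p} the ordinary Nijenhuis tensor satisfies $[\phi_\alpha,\phi_\alpha](X,Y)=\pm 2\,d\eta(X,Y)\xi$ and is thus tangent to $\mathcal F_\xi$. Taking $\xi$-basic lifts $X,Y$ of vector fields $\bar X,\bar Y$ on $\bar M$ and expanding $[\phi_\alpha,\phi_\alpha]$, each of its four terms projects correctly (using $\phi_\alpha^2=I-\eta\otimes\xi$ for $\alpha\in\{1,2\}$ and $\phi_3^2=-I+\eta\otimes\xi$ for the $\phi_3^2[X,Y]$ term, and the Jacobi identity to see that brackets of $\xi$-basic fields are $\xi$-basic), yielding $\pi_*\bigl([\phi_\alpha,\phi_\alpha](X,Y)\bigr)=[\bar\phi_\alpha,\bar\phi_\alpha](\bar X,\bar Y)$. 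Since the left-hand side vanishes, each $\bar\phi_\alpha$ is integrable, so $(\bar\phi_1,\bar\phi_2,\bar\phi_3)$ is an anti-hypercomplex structure and $\mathcal F_\xi$ is transversely anti-hypercomplex.

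For the second assertion I would show that $\nabla^c$ is a foliate (projectable) connection for $\mathcal F_\xi$. From $\nabla^c\xi=0$ one gets $\nabla^c_X(f\xi)=(Xf)\xi$, tangent to $\mathcal F_\xi$; from $T^c(\xi,X)=2\,d\eta(\xi,X)\xi=0$ one gets $\nabla^c_\xi X=[\xi,X]$, which is $\xi$-basic whenever $X$ is; and for $\xi$-basic $X,Y$ one checks that $\nabla^c_X Y$ is again $\xi$-basic, using the bracket formulas \eqref{definizione1}--\eqref{definizione3} for the coinciding connections $\nabla^1=\nabla^2=\nabla^3=\nabla^c$ (which agree in the normal case), the Jacobi identity, $\mathcal L_\xi\phi_\beta=0$, and the flatness of the foliations $\mathcal D^\pm_\alpha$, i.e. the fact that $[\xi,\cdot]$ preserves each $\mathcal D^\pm_\alpha$. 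Hence $\nabla^c$ induces a linear connection $\bar\nabla$ on $\bar M$ via $\bar\nabla_{\bar X}\bar Y=\pi_*(\nabla^c_X Y)$ on $\xi$-basic lifts.

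Finally I would identify $\bar\nabla$. Its torsion is $\pi_*(T^c(X,Y))=\pi_*(2\,d\eta(X,Y)\xi)=0$ by \eqref{torsionesemplificata}, so $\bar\nabla$ is torsion-free; and since $\nabla^c\phi_\alpha=\frac23\eta\otimes h_\alpha=0$ by normality, one has $(\bar\nabla_{\bar X}\bar\phi_\alpha)\bar Y=\pi_*\bigl((\nabla^c_X\phi_\alpha)Y\bigr)=0$, so $\bar\nabla$ preserves $\bar\phi_1,\bar\phi_2,\bar\phi_3$. By the uniqueness of the torsion-free connection preserving an anti-hypercomplex structure, $\bar\nabla$ is the Obata connection of $\bar M$, which is precisely the claim. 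The bulk of the work — and essentially the only place where normality is genuinely used beyond $\nabla^c\phi_\alpha=0$ — is the foliation bookkeeping showing that $\phi_\alpha$ and $\nabla^c$ send $\xi$-basic fields to $\xi$-basic fields; everything else is formal.
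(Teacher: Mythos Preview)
Your proposal is correct and follows the same overall architecture as the paper's proof: show that $\mathcal L_\xi\phi_\alpha=0$ so the $\phi_\alpha$ descend, push down the vanishing of the Nijenhuis tensors, verify that $\nabla^c$ is projectable, and identify the projected connection via torsion-freeness and $\bar\nabla\bar\phi_\alpha=0$. The one noteworthy difference is in the step showing that $\nabla^c_XY$ is $\xi$-basic for basic $X,Y\in\Gamma(\mathcal D)$. The paper exploits the curvature identity $R^c(X,\xi)=0$ from \eqref{curvatura4}: since $[X,\xi]=0$ and $[\xi,Y]=0$ for basic $X,Y$, one gets $0=R^c(X,\xi)Y=\nabla^c_X\nabla^c_\xi Y-\nabla^c_\xi\nabla^c_XY-\nabla^c_{[X,\xi]}Y=-\nabla^c_\xi\nabla^c_XY=-[\xi,\nabla^c_XY]$ in one line. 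You instead go through the explicit bracket formulas \eqref{definizione1}--\eqref{definizione3}, the Jacobi identity, and flatness of $\mathcal D^\pm_\alpha$. Both work; the paper's route is cleaner once \eqref{curvatura4} is available, while yours is self-contained and does not invoke the preceding curvature corollary. One terminological quibble: for basic $X$, the vector field $\nabla^c_\xi X=[\xi,X]$ is not merely ``$\xi$-basic'' but in fact zero (it lies in both $\mathcal D$ and $\mathbb R\xi$), which is precisely Molino's condition (a) as the paper states it.
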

\begin{proof}
First of all we have to prove that the tensor fields $\phi_{1}$,
$\phi_{2}$, $\phi_{3}$ are ``foliated'' objects, i.e. they are
constant along the leaves of the Reeb foliation ${\mathcal
F}_{\xi}$. Thus we have to show that ${\mathcal
L}_{\xi}\phi_{\alpha}=0$ for each $\alpha\in\left\{1,2,3\right\}$.
In fact this condition is satisfied because, by assumption,
$N^{(1)}_{\phi_{\alpha}}=0$, so that also
$N^{(3)}_{\phi_{\alpha}}={\mathcal L}_{\xi}\phi_{\alpha}=0$. Thus
the tensor fields $\phi_{1}$, $\phi_{2}$, $\phi_{3}$ are
projectable. We prove that they (locally) project onto an
anti-hypercomplex structure. Let $\pi$ be a local submersion
defining the Reeb foliation. For each
$\alpha\in\left\{1,2,3\right\}$ let $J_{\alpha}$ be the tensor field
defined by $\pi_{\ast}\circ\phi_{\alpha}=J_{\alpha}\circ\pi_{\ast}$.
Then it is clear that $(J_{1},J_{2},J_{3})$ is an almost
anti-hypercomplex structure. Moreover, for any two (local) vector
fields $X'$ and $Y'$ in the leaf space, denoting by $X$ and $Y$ the
unique basic vector fields on $M$ such such that $\pi_{\ast}X=X'$
and $\pi_{\ast}Y=Y'$, we have
\begin{equation*}
[J_{\alpha},J_{\alpha}](X',Y')=\pi_{\ast}\bigl(N^{(1)}_{\phi_{\alpha}}(X,Y)\bigr)=0,
\end{equation*}
so that the structure is integrable. For concluding the proof we
prove that the canonical connection $\nabla^{c}$ projects onto the
the Obata connection $\nabla^{Ob}$. First we prove that
 $\nabla^{c}$ is projectable, i.e. it projects to
connections of the local slice spaces of ${\mathcal F}_{\xi}$. The
conditions for this are: a) for any basic vector fields
$X\in\Gamma({\mathcal D})$ and for any $V\in\Gamma(T{\mathcal
F}_{\xi})$ one has $\nabla^{c}_{V}X=0$, b) if $X$ and $Y$ are basic
vector fields then also $\nabla^{c}_{X}Y$ is a basic vector field
(\cite{molino}). Here, by ``basic vector field'' we mean a vector
field $X$ transverse to the foliation ${\mathcal F}_{\xi}$ which is
locally projectable to a vector field on the leaf space by means a
local submersion defining  ${\mathcal F}_{\xi}$; one can see that
this is equivalent to require that $[X,V]$ is still tangent to the
foliation for any $V\in\Gamma(T{\mathcal F}_{\xi})$ (cf.
\cite{molino}, \cite{tondeur}). Now the condition (a) is easily
verified since $\nabla^{c}_{\xi}X=[\xi,X]=0$ because $[\xi,X]$ is
tangent both to $\mathcal D$ and to ${\mathcal F}_{\xi}$ ($X$ being
basic). Also the second condition holds. Indeed first recall that,
by construction, $\nabla^{c}$ preserves the contact distribution;
next, by \eqref{curvatura4},
\begin{equation}\label{ausiliare1}
0=R^{c}(X,\xi)Y=\nabla^{c}_{X}\nabla^{c}_{\xi}Y-\nabla^{c}_{\xi}\nabla^{c}_{X}Y-\nabla^{c}_{[X,\xi]}Y=\nabla^{c}_{X}[\xi,Y]-\nabla^{c}_{\xi}\nabla^{c}_{X}Y=-\nabla^{c}_{\xi}\nabla^{c}_{X}Y
\end{equation}
since $[X,\xi]=[Y,\xi]=0$, $X$, $Y$ being basic. Thus, by
\eqref{ausiliare1},
$[\xi,\nabla^{c}_{X}Y]=\nabla^{c}_{\xi}\nabla^{c}_{X}Y=0$ and hence
$\nabla^{c}_{X}Y$ is basic. Therefore $\nabla^{c}$ locally projects
along the leaves of ${\mathcal F}_{\xi}$ to a linear connection
$\nabla'$ which parallelizes the induced complex and product
structures, since $\nabla^{c}\phi_{\alpha}=0$ for each
$\alpha\in\left\{1,2,3\right\}$. It remains to prove that $\nabla'$
is symmetric. Let $X'$, $Y'$ be any local vector fields on the leaf
space and let $X$, $Y$ be the corresponding basic vector fields such
that $\pi_{\ast}X=X'$ and $\pi_{\ast}Y=Y'$. Then
$T'(X',Y')=\pi_{\ast}T^{c}(X,Y)=\pi_{\ast}(2d\eta(X,Y)\xi)=0$. Thus
$\nabla'$ coincides with the Obata connection.
\end{proof}

\section{The standard bi-paracontact structure of a contact metric $(\kappa,\mu)$-space}

In this section we study one of the main examples of almost
bi-paracontact manifolds, namely we show that any (non-Sasakian)
contact metric $(\kappa,\mu)$-space admits a canonical almost
bi-paracontact structure which satisfies very interesting
properties.

Recall that a contact metric $(\kappa,\mu)$-space is a contact
metric manifold $(M,\phi,\xi,\eta,g)$ such that the Reeb vector
field $\xi$ belongs to the ``$(\kappa,\mu)$-nullity distribution''
i.e.
\begin{equation}\label{eq-km}
R^{g}(X,Y)\xi=\kappa\left(\eta\left(Y\right)X-\eta\left(X\right)Y\right)+\mu\left(\eta\left(Y\right)hX-\eta\left(X\right)hY\right),
\end{equation}
This notion was introduced by Blair, Koufogiorgos and Papantoniou in
\cite{BKP-95}, who proved  the following fundamental results.

\begin{theorem}[\cite{BKP-95}]\label{teoremagreci}
Let $\left(M,\phi,\xi,\eta,g\right)$ be a contact metric
$(\kappa,\mu)$-space. Then necessarily $\kappa\leq 1$.  If
$\kappa=1$ then $h=0$ and $\left(M,\phi,\xi,\eta,g\right)$ is
 Sasakian; if $\kappa<1$, the contact metric structure is not
Sasakian and $M$ admits three mutually orthogonal totally geodesic
distributions ${\mathcal D}(0)=\mathbb{R}\xi$, ${\mathcal
D}_{h}(\lambda)$ and ${\mathcal D}_{h}(-\lambda)=\phi({\mathcal
D}_{h}(\lambda))$ corresponding to the eigenspaces of $h$, where
$\lambda=\sqrt{1-\kappa}$.
\end{theorem}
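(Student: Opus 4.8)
This is Theorem \ref{teoremagreci} (Blair--Koufogiorgos--Papantoniou); I would recover it in three steps. \emph{Step 1: the basic algebraic identity.} First I would show that on every contact metric $(\kappa,\mu)$-space $h^{2}=(1-\kappa)(I-\eta\otimes\xi)$. Setting $l:=R^{g}(\cdot,\xi)\xi$ and taking $Y=\xi$ in \eqref{eq-km} gives $lX=\kappa(X-\eta(X)\xi)+\mu hX$, i.e. $l=-\kappa\phi^{2}+\mu h$. Combining this with the standard contact metric identity $\phi l\phi-l=2(h^{2}+\phi^{2})$ (cf. \cite{Blair-02}) and using $\phi^{3}=-\phi$, $\phi h=-h\phi$ (proved as in Lemma \ref{lemma1}(a), since ${\mathcal L}_{\xi}(\phi^{2})={\mathcal L}_{\xi}(-I+\eta\otimes\xi)=0$) and $h\xi=0$, a short computation gives $\phi l\phi-l=2\kappa\phi^{2}$, whence $h^{2}=(\kappa-1)\phi^{2}=(1-\kappa)(I-\eta\otimes\xi)$; in particular $h^{2}|_{\mathcal D}=(1-\kappa)\,\mathrm{id}_{\mathcal D}$.

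\emph{Step 2: the trichotomy.} Since $h$ is $g$-symmetric it is diagonalizable over $\mathbb{R}$ with $h\xi=0$, and by Step 1 every eigenvalue $\nu$ of $h$ on $\mathcal D$ satisfies $\nu^{2}=1-\kappa$; as $\mathcal D\neq\{0\}$ this forces $\kappa\leq 1$. If $\kappa=1$, then $h^{2}=0$ and hence $h=0$ by symmetry, so \eqref{acca2} gives $\nabla^{g}\xi=-\phi$ ($\xi$ Killing, $M$ $K$-contact), and \eqref{eq-km} reduces to $R^{g}(X,Y)\xi=\eta(Y)X-\eta(X)Y$, the classical criterion for a contact metric manifold to be Sasakian (\cite{Blair-02}); thus $M$ is Sasakian. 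If $\kappa<1$, set $\lambda:=\sqrt{1-\kappa}>0$; then $h|_{\mathcal D}$ has eigenvalues exactly $\pm\lambda$, so $\mathcal D=\mathcal D_{h}(\lambda)\oplus\mathcal D_{h}(-\lambda)$ and $\mathbb{R}\xi=\mathcal D(0)$ is the $0$-eigenspace. Since $\kappa$, hence $\lambda$, is constant, these eigenspaces are smooth subbundles, mutually $g$-orthogonal as eigenspaces of a symmetric operator for distinct eigenvalues; from $\phi h=-h\phi$ and the fact that $\phi$ restricts to an automorphism of $\mathcal D$ one sees that $\phi$ interchanges $\mathcal D_{h}(\lambda)$ and $\mathcal D_{h}(-\lambda)$, so $\mathcal D_{h}(-\lambda)=\phi(\mathcal D_{h}(\lambda))$ and $\dim\mathcal D_{h}(\lambda)=\dim\mathcal D_{h}(-\lambda)=n$. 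Finally $h\neq 0$, so $\xi$ is not Killing, $M$ is not $K$-contact, and a fortiori not Sasakian.

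\emph{Step 3: total geodesy (the main obstacle).} For $\mathcal D(0)=\mathbb{R}\xi$ this is immediate: $\nabla^{g}_{\xi}\xi=-\phi\xi-\phi h\xi=0$ by \eqref{acca2}, so the $\xi$-orbits are geodesics. For $\mathcal D_{h}(\lambda)$ (and symmetrically $\mathcal D_{h}(-\lambda)$) the plan is to first derive the covariant-derivative formula for $h$ on a $(\kappa,\mu)$-space,
\[
(\nabla^{g}_{X}h)Y=\bigl((1-\kappa)g(X,\phi Y)+g(X,h\phi Y)\bigr)\xi+\eta(Y)\,h(\phi X+\phi hX)-\mu\,\eta(X)\phi hY,
\]
obtained by specializing the general contact metric expression for $\nabla^{g}h$ (built from $\nabla^{g}_{\xi}h=-\mu\phi h$ and the structure equations) via \eqref{eq-km} and Step 1. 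For $X,Y\in\Gamma(\mathcal D_{h}(\lambda))$ the last two terms vanish and the first is a multiple of $g(X,\phi Y)\xi$, which is zero because $\phi Y\in\mathcal D_{h}(-\lambda)$ is $g$-orthogonal to $X$; hence $(\nabla^{g}_{X}h)Y=0$, i.e. $h(\nabla^{g}_{X}Y)=\lambda\nabla^{g}_{X}Y$. Since also $g(\nabla^{g}_{X}Y,\xi)=-g(Y,\nabla^{g}_{X}\xi)=(1+\lambda)g(Y,\phi X)=0$ (using \eqref{acca2} and the orthogonality of $\mathcal D_{h}(\lambda)$, $\mathcal D_{h}(-\lambda)$), we conclude $\nabla^{g}_{X}Y\in\Gamma(\mathcal D_{h}(\lambda))$, so $\mathcal D_{h}(\lambda)$ is integrable with totally geodesic leaves. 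The only non-formal ingredient in this whole argument is the derivation of the formula for $\nabla^{g}h$ (and behind it $\nabla^{g}_{\xi}h$) from the structure equations of a contact metric manifold — this is the computational core of \cite{BKP-95}; once it is available, everything else is linear algebra driven by the identity $h^{2}=(1-\kappa)(I-\eta\otimes\xi)$.
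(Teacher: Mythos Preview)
The paper does not prove this theorem: it is quoted verbatim from \cite{BKP-95} and used as a black box, so there is no ``paper's own proof'' to compare against. Your three-step outline is a faithful reconstruction of the original Blair--Koufogiorgos--Papantoniou argument: the identity $h^{2}=(\kappa-1)\phi^{2}$ via $\phi l\phi-l=2(h^{2}+\phi^{2})$, the eigenvalue analysis giving $\kappa\le 1$ and the splitting $\mathcal D=\mathcal D_{h}(\lambda)\oplus\mathcal D_{h}(-\lambda)$, and total geodesy from the explicit formula for $(\nabla^{g}_{X}h)Y$. One small remark on Step~3: once you have $(\nabla^{g}_{X}h)Y=0$ for $X,Y\in\Gamma(\mathcal D_{h}(\lambda))$, the conclusion $\nabla^{g}_{X}Y\in\Gamma(\mathcal D_{h}(\lambda))$ already follows, since $h$ maps into $\mathcal D$ and $\lambda\neq 0$; your separate check that $g(\nabla^{g}_{X}Y,\xi)=0$ is correct but redundant.
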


Furthermore, in \cite{BKP-95} it is proved that any contact metric
$(\kappa,\mu)$-space satisfies \eqref{integrabile1}, hence it is
integrable, and for any $X\in\Gamma({\mathcal D}_{h}(\lambda))$,
$Y\in\Gamma({\mathcal D}_{h}(-\lambda))$,
$\nabla^{g}_{X}Y\in\Gamma({\mathcal
D}_{h}(-\lambda)\oplus\mathbb{R}\xi)$ and
$\nabla^{g}_{Y}X\in\Gamma({\mathcal
D}_{h}(\lambda)\oplus\mathbb{R}\xi)$.

Given a non-Sasakian contact metric $(\kappa,\mu)$-manifold
$(M,\phi,\xi,\eta,g)$, Boeckx \cite{Boeckx-00} proved that the
number $I_{M}:=\frac{1-\frac{\mu}{2}}{\sqrt{1-\kappa}}$, is an
invariant of the contact metric $(\kappa,\mu)$-structure, and he
proved that two non-Sasakian contact metric $(\kappa,\mu)$-manifolds
$(M_1,\phi_1,\xi_1,\eta_1,g_1)$ and $(M_2,\phi_2,\xi_2,\eta_2,g_2)$
are locally isometric as contact metric manifolds if and only if
$I_{M_1}=I_{M_2}$. Then the invariant $I_M$ has been used by Boeckx
for providing a local classification of contact metric
$(\kappa,\mu)$-spaces. An interpretation of the Boeckx invariant in
terms of Legendre foliations is given in \cite{Mino-toap1}.

The standard example of contact metric $(\kappa,\mu)$-manifolds is
given by the tangent sphere bundle $T_{1}N$ of a Riemannian manifold
$N$ of constant curvature $c$ endowed with its standard contact
metric structure. In this case $\kappa=c(2-c)$, $\mu=-2c$ and
$I_{T_{1}N}=\frac{1+c}{|1-c|}$.

The link between contact metric $(\kappa,\mu)$-spaces with the
theory of Legendre foliations was pointed out in
\cite{Mino-Luigia-07} and \cite{Mino-toap1}. In fact any contact
metric $(\kappa,\mu)$-space $(M,\phi,\xi,\eta,g)$ is canonically  a
bi-Legendrian manifold with bi-Legendrian structure given by
$({\mathcal D}_{h}(\lambda),{\mathcal D}_{h}(-\lambda))$, and the
corresponding bi-Legendrian connection preserves the tensors $\phi$,
$h$, $g$ (\cite{Mino-07}, \cite{Mino-Luigia-07}). We prove now that
a contact metric $(\kappa,\mu)$-space admits a further bi-Legendrian
structure which is transverse to $({\mathcal
D}_{h}(\lambda),{\mathcal D}_{h}(-\lambda))$.

\begin{theorem}\label{main1}
In any non-Sasakian contact metric $(\kappa,\mu)$-manifold the
operator $\phi h$ admits three eigenvalues, $0$, of multiplicity
$1$, and $\lambda$, $-\lambda$, each of multiplicity $n$, where
$\lambda:=\sqrt{1-\kappa}$. The corresponding eigendistributions are
given by  ${\mathcal D}_{\phi h}(0)=\mathbb{R}\xi$ and
\begin{align}
{\mathcal D}_{\phi h}(\lambda)&=\left\{X+\phi X |
X\in\Gamma({\mathcal D}_{h}(\lambda)\right\},\label{distribuzione1}\\
{\mathcal D}_{\phi h}(-\lambda)&=\left\{Y+\phi Y |
Y\in\Gamma({\mathcal D}_{h}(-\lambda)\right\}.\label{distribuzione2}
\end{align}
Furthermore, ${\mathcal D}_{\phi h}(\lambda)$ and ${\mathcal
D}_{\phi h}(-\lambda)$  define two mutually orthogonal Legendre
foliations which are transversal to the canonical bi-Legendrian
structure $({\mathcal D}_{h}(\lambda),{\mathcal D}_{h}(-\lambda))$.
\end{theorem}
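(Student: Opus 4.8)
The plan is to read the spectrum of $\phi h$ off that of $h$ (Theorem \ref{teoremagreci}), using that $\phi$ exchanges $\mathcal{D}_{h}(\lambda)$ and $\mathcal{D}_{h}(-\lambda)$ and that $\phi^{2}=-I$ on $\mathcal{D}=\ker\eta$, and then to obtain the Legendre, orthogonality and foliation assertions from the metric compatibility \eqref{metric1}, the integrability condition \eqref{integrabile1} (which every contact metric $(\kappa,\mu)$-space satisfies), and the mixed covariant-derivative relations for $\mathcal{D}_{h}(\pm\lambda)$ recalled from \cite{BKP-95}.

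First I would produce the eigenvectors of $\phi h$ explicitly. Since $h\xi=0$, also $\phi h\xi=0$. If $X$ is a local section of $\mathcal{D}_{h}(\lambda)$, then $hX=\lambda X$ and $\phi X\in\Gamma(\mathcal{D}_{h}(-\lambda))$, so $h\phi X=-\lambda\phi X$, and
\begin{equation*}
\phi h(X+\phi X)=\phi(hX)+\phi(h\phi X)=\lambda\phi X-\lambda\phi^{2}X=\lambda\phi X+\lambda X=\lambda(X+\phi X);
\end{equation*}
symmetrically $\phi h(Y+\phi Y)=-\lambda(Y+\phi Y)$ for $Y\in\Gamma(\mathcal{D}_{h}(-\lambda))$. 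The assignment $X\mapsto X+\phi X$ is injective on $\mathcal{D}$ (from $X+\phi X=0$ one gets $-X=\phi^{2}X=-\phi X=X$, hence $X=0$), and $\dim\mathcal{D}_{h}(\lambda)=\dim\mathcal{D}_{h}(-\lambda)=n$ by Theorem \ref{teoremagreci}; thus one exhibits $1+n+n=2n+1$ linearly independent eigenvectors, with eigenvalues $0,\lambda,-\lambda$. Since eigenspaces belonging to distinct eigenvalues are independent, $\phi h$ is diagonalizable with the stated multiplicities, and its eigendistributions are exactly $\mathbb{R}\xi$, the one in \eqref{distribuzione1}, and the one in \eqref{distribuzione2}.

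Next I would settle the geometric properties, each of which reduces to a short computation. Using $d\eta(X,Y)=g(X,\phi Y)$, for $Z=X+\phi X$ and $Z'=X'+\phi X'$ with $X,X'\in\Gamma(\mathcal{D}_{h}(\lambda))$ (so $\phi Z'=\phi X'-X'$), one expands
\begin{equation*}
d\eta(Z,Z')=g(X,\phi X')-g(X,X')+g(\phi X,\phi X')-g(\phi X,X').
\end{equation*}
Here $g(X,\phi X')=g(\phi X,X')=0$ because $\mathcal{D}_{h}(\lambda)\perp\mathcal{D}_{h}(-\lambda)$, while $g(\phi X,\phi X')=g(X,X')$ by \eqref{metric1}, so $d\eta(Z,Z')=0$; the same computation works on $\mathcal{D}_{\phi h}(-\lambda)$, and since both distributions are $n$-dimensional they are Legendre distributions. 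The same ingredients applied to $g(X+\phi X,Y+\phi Y)$ with $X\in\Gamma(\mathcal{D}_{h}(\lambda))$, $Y\in\Gamma(\mathcal{D}_{h}(-\lambda))$ leave only $g(X,\phi Y)+g(\phi X,Y)=0$, giving $\mathcal{D}_{\phi h}(\lambda)\perp\mathcal{D}_{\phi h}(-\lambda)$. Transversality to $(\mathcal{D}_{h}(\lambda),\mathcal{D}_{h}(-\lambda))$ then follows by a dimension count, once one observes that a nonzero $X+\phi X$ with $X\in\Gamma(\mathcal{D}_{h}(\lambda))$ cannot belong to $\mathcal{D}_{h}(\lambda)$ nor to $\mathcal{D}_{h}(-\lambda)$, since its components $X$ and $\phi X$ along $\mathcal{D}_{h}(\lambda)$ and $\mathcal{D}_{h}(-\lambda)$ would otherwise be eigenvectors of $h$ for one and the same eigenvalue.

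The substantial point, which I expect to be the main obstacle, is the integrability of $\mathcal{D}_{\phi h}(\lambda)$ (that of $\mathcal{D}_{\phi h}(-\lambda)$ being entirely analogous). Here I would work with the Levi Civita connection. For $Z=X+\phi X$ and $Z'=X'+\phi X'$ as above, \eqref{integrabile1} gives $(\nabla^{g}_{Z}\phi)X'=g(Z+hZ,X')\xi$; since $hZ=\lambda X-\lambda\phi X$ and $g(\phi X,X')=0$, this equals $(1+\lambda)g(X,X')\xi$, so that $\nabla^{g}_{Z}Z'=(I+\phi)\nabla^{g}_{Z}X'+(1+\lambda)g(X,X')\xi$. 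Subtracting the corresponding identity for $\nabla^{g}_{Z'}Z$ and using that $\nabla^{g}$ is torsion-free,
\begin{equation*}
[Z,Z']=(I+\phi)\bigl(\nabla^{g}_{Z}X'-\nabla^{g}_{Z'}X\bigr)=(I+\phi)\bigl([X,X']+\nabla^{g}_{\phi X}X'-\nabla^{g}_{\phi X'}X\bigr).
\end{equation*}
Now $[X,X']\in\Gamma(\mathcal{D}_{h}(\lambda))$ because $\mathcal{D}_{h}(\lambda)$ is a Legendre foliation, and by the relations recalled from \cite{BKP-95} both $\nabla^{g}_{\phi X}X'$ and $\nabla^{g}_{\phi X'}X$ lie in $\Gamma(\mathcal{D}_{h}(\lambda)\oplus\mathbb{R}\xi)$; writing $\nabla^{g}_{Z}X'-\nabla^{g}_{Z'}X=V+f\xi$ with $V\in\Gamma(\mathcal{D}_{h}(\lambda))$ and $f$ a smooth function, and using $\phi\xi=0$, one gets $[Z,Z']=(V+\phi V)+f\xi$, whose $\mathcal{D}$-component $V+\phi V$ is a section of $\mathcal{D}_{\phi h}(\lambda)$. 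Finally, since $Z,Z'\in\Gamma(\ker\eta)$ and $d\eta(Z,Z')=0$, we have $\eta([Z,Z'])=0$, which forces $f=0$; hence $[Z,Z']\in\Gamma(\mathcal{D}_{\phi h}(\lambda))$. As such vector fields $Z$ locally span $\mathcal{D}_{\phi h}(\lambda)$, Frobenius' theorem yields the foliation. The delicate part throughout this last argument is the careful bookkeeping of the $\xi$-components needed to conclude $f=0$; the remaining computations are routine.
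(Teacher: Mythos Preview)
Your proof is correct and follows essentially the same route as the paper: both identify the eigenspaces of $\phi h$ via the map $X\mapsto X+\phi X$ from $\mathcal{D}_{h}(\pm\lambda)$, and both establish integrability by applying the integrability condition \eqref{integrabile1} together with the fact from \cite{BKP-95} that $\nabla^{g}_{\phi X}X'\in\Gamma(\mathcal{D}_{h}(\lambda)\oplus\mathbb{R}\xi)$. The only notable difference is in the bookkeeping of the $\xi$-component: the paper computes it explicitly via \eqref{acca2} and watches the terms cancel in \eqref{intermedio2}--\eqref{intermedio3}, whereas you dispose of it more cleanly by invoking $\eta([Z,Z'])=-2d\eta(Z,Z')=0$, which you had already established when proving the Legendre property.
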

\begin{proof}
That $\phi h$ admits the eigenvalues $0$ and $\pm\sqrt{1-\kappa}$
follows from the relation $h^2=(\kappa-1)\phi^{2}$ (\cite{BKP-95}).
Since the operator $h$ is symmetric and $\phi$ anti-commutes with
$h$, also $\phi h$ is symmetric and hence it is diagonalizable. Now,
since the kernel of $\phi h$ is generated by the Reeb vector field,
we have that ${\mathcal D}_{\phi h}(0)=\mathbb{R}\xi$. Moreover, if
$X\in\Gamma({\mathcal D}_{\phi h}(\lambda))$, then $\phi h \phi
X=-\phi\phi h X=-\lambda\phi X$, so that $\phi X\in\Gamma({\mathcal
D}_{\phi h}(-\lambda))$. This implies that ${\mathcal D}_{\phi
h}(\lambda)$ and ${\mathcal D}_{\phi h}(-\lambda)$  have equal
dimension $n$, if $2n+1$ is the dimension of $M$.  ${\mathcal
D}_{\phi h}(\lambda)$ and ${\mathcal D}_{\phi h}(-\lambda)$ are in
fact mutually orthogonal. Indeed, for any $X\in\Gamma({\mathcal
D}_{\phi h}(\lambda))$ and $Y\in\Gamma({\mathcal D}_{\phi
h}(-\lambda))$, since the operator $\phi h$ is symmetric, we have
$\lambda g(X,Y)=g(\phi h X,Y)=g(X,\phi h Y)=-\lambda g(X,Y)$, so
that $g(X,Y)=0$. In order to prove \eqref{distribuzione1} first
notice that, for any $X\in\Gamma({\mathcal D}_{h}(\lambda))$, $\phi
h(X+\phi X)=\lambda\phi X-\phi^{2}h X=\lambda(X+\phi X)$ so that
$X+\phi X\in\Gamma({\mathcal D}_{\phi h}(\lambda))$. Thus it remains
to show that, given $Y\in\Gamma({\mathcal D}_{\phi h}(\lambda))$,
there exists $X\in\Gamma({\mathcal D}_{h}(\lambda))$ such that
$Y=X+\phi X$. One can verify that $X:=\frac{1}{2}(Y-\phi Y)$ has the
required properties. In a similar way one proves
\eqref{distribuzione2}. Now we are able to demonstrate the
integrability of the distributions ${\mathcal D}_{\phi h}(\lambda)$
and ${\mathcal D}_{\phi h}(-\lambda)$. Any two sections of
${\mathcal D}_{\phi h}(\lambda)$ can be written as $X+\phi X$ and
$X'+\phi X'$, for some $X,X'\in\Gamma({\mathcal D}_{h}(\lambda))$.
Then, by \eqref{integrabile1}
\begin{align}
\nabla^{g}_{X+\phi X}(X'+\phi X')&=\nabla^{g}_{X}X'+\nabla^{g}_{\phi
X}X'+\phi\nabla^{g}_{X}X'+g(X+hX,X')\xi+\phi\nabla^{g}_{\phi
X}X'\nonumber\\
&\quad+g(\phi X+h\phi X,X')\xi\label{intermedio1}
\end{align}
\begin{align*}
&=\nabla^{g}_{X}X'+\phi\nabla^{g}_{X}X'+\nabla^{g}_{\phi
X}X'+\phi\nabla^{g}_{\phi X}X'+(1+\lambda)g(X,X')\xi.
\end{align*}
Now, $\nabla^{g}_{\phi X}X'\in\Gamma({\mathcal
D}_{h}(\lambda)\oplus\mathbb{R}\xi)$, so that we can decompose
$\nabla^{g}_{\phi X}X'$ along its component tangent to ${\mathcal
D}_{h}(\lambda)$ and the one tangent to $\mathbb{R}\xi$, given by
$\eta(\nabla^{g}_{\phi X}X')\xi=g(\nabla^{g}_{\phi X}X',\xi)\xi$.
But, by \eqref{acca2}, $g(\nabla^{g}_{\phi
X}X',\xi)=-g(X',\nabla^{g}_{\phi X}\xi)=(\lambda-1)g(X,X')$, so that
\eqref{intermedio1} becomes
\begin{align}
\nabla^{g}_{X+\phi X}(X'+\phi
X')&=\nabla^{g}_{X}X'+\phi\nabla^{g}_{X}X' + (\nabla^{g}_{\phi
X}X')_{{\mathcal D}_{h}(\lambda)}+\phi(\nabla^{g}_{\phi
X}X')_{{\mathcal
D}_{h}(\lambda)}\nonumber\\
&\quad + 2\lambda g(X,X')\xi.\label{intermedio2}
\end{align}
Therefore
\begin{align}
[X+\phi X,X'+\phi X']&=[X,X']-\phi[X,X']+(\nabla^{g}_{\phi
X}X')_{{\mathcal D}_{h}(\lambda)}+\phi(\nabla^{g}_{\phi
X}X')_{{\mathcal D}_{h}(\lambda)}\nonumber\\
&\quad - (\nabla^{g}_{\phi X'}X)_{{\mathcal
D}_{h}(\lambda)}+\phi(\nabla^{g}_{\phi X'}X)_{{\mathcal
D}_{h}(\lambda)}.\label{intermedio3}
\end{align}
Due to \eqref{distribuzione1} each of the three terms
$[X,X']-\phi[X,X']$, $(\nabla^{g}_{\phi X}X')_{{\mathcal
D}_{h}(\lambda)}+\phi(\nabla^{g}_{\phi X}X')_{{\mathcal
D}_{h}(\lambda)}$ and $(\nabla^{g}_{\phi X'}X)_{{\mathcal
D}_{h}(\lambda)}+\phi(\nabla^{g}_{\phi X'}X)_{{\mathcal
D}_{h}(\lambda)}$  in the right-hand-side of \eqref{intermedio3} is
a section of ${\mathcal D}_{\phi h}(\lambda)$. Thus we conclude that
${\mathcal D}_{\phi h}(\lambda)$ is involutive. In particular, being
${\mathcal D}_{\phi h}(\lambda)$ an integrable subbundle of
$\mathcal D$, it defines a Legendre foliation of $M$. Analogous
arguments work also for ${\mathcal D}_{\phi h}(-\lambda)$. \ It
remains to prove that ${\mathcal D}_{\phi h}(\lambda)$ and
${\mathcal D}_{\phi h}(-\lambda)$ are transverse to each foliation
of the bi-Legendrian structure $({\mathcal D}_{h}(\lambda),{\mathcal
D}_{h}(-\lambda))$. For instance we show that $TM={\mathcal D}_{\phi
h}(\lambda)\oplus{\mathcal D}_{h}(-\lambda)\oplus\mathbb{R}\xi$, the
other cases being similar. If $X$ is a vector field tangent both to
${\mathcal D}_{\phi h}(\lambda)$ and to ${\mathcal D}_{h}(-\lambda)$
then $\lambda X=\phi h X=-\lambda\phi X$ so that $X=-\phi X$. By
applying $\phi$ we get $X=\phi X$, hence $X=0$. Next, let $Z$ be a
vector field on $M$. Then there exist $X\in\Gamma({\mathcal
D}_{h}(\lambda))$ and $Y\in\Gamma({\mathcal D}_{h}(-\lambda))$ such
that $Z=X+Y+\eta(Z)\xi$. Adding and subtracting $\phi X
\in\Gamma({\mathcal D}_{h}(-\lambda))$ we obtain $Z=(X+\phi
X)+(Y-\phi X) + \eta(Z)\xi$, where $X+\phi X\in\Gamma({\mathcal
D}_{\phi h}(\lambda))$ and $Y-\phi X\in\Gamma({\mathcal
D}_{h}(-\lambda))$.
\end{proof}

Theorem \ref{main1} implies that any (non-Sasakian) contact metric
$(\kappa,\mu)$-space is endowed with two transverse bi-Legendrian
structures $({\mathcal D}_{h}(\lambda),{\mathcal D}_{h}(-\lambda))$
and $({\mathcal D}_{\phi h}(\lambda),{\mathcal D}_{\phi
h}(-\lambda))$ defined by the eigenspaces of the operators $h$ and
$\phi h$ corresponding to the eigenvalues $\pm\lambda$. Thus by
Proposition \ref{legendre2} we conclude that any (non-Sasakian)
contact metric $(\kappa,\mu)$-space $M$ admits an integrable almost
bi-paracontact structure which we call the \emph{standard almost
bi-paracontact structure} of the contact metric $(\kappa,\mu)$-space
$M$. One can easily prove the following result.

\begin{theorem}\label{main2}
Let $(M,\phi,\xi,\eta,g)$ be a non-Sasakian contact metric
$(\kappa,\mu)$-space. The standard almost bi-paracontact structure
of $M$ is given by $(\phi_{1},\phi_{2},\phi_{3})$, where
\begin{equation*}
\phi_{1}:=\frac{1}{\sqrt{1-\kappa}}\phi h, \ \
\phi_{2}:=\frac{1}{\sqrt{1-\kappa}}h, \ \ \phi_{3}:=\phi.
\end{equation*}
\end{theorem}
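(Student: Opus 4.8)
The plan is to read the statement as an explicit identification of the structure produced by Proposition~\ref{legendre2}. Indeed, by definition the \emph{standard} almost bi-paracontact structure of a non-Sasakian contact metric $(\kappa,\mu)$-space $M$ is the one furnished by Proposition~\ref{legendre2} applied to the two transverse bi-Legendrian structures $({\mathcal D}_{\phi h}(\lambda),{\mathcal D}_{\phi h}(-\lambda))$ and $({\mathcal D}_{h}(\lambda),{\mathcal D}_{h}(-\lambda))$ coming, respectively, from Theorem~\ref{main1} and from \cite{BKP-95}, the first being assigned to $\phi_1$ and the second to $\phi_2$. Unwinding the construction in the proof of Proposition~\ref{legendre2}, this means that $\phi_1$ is the unique $(1,1)$-tensor field with $\phi_1|_{{\mathcal D}_{\phi h}(\lambda)}=I$, $\phi_1|_{{\mathcal D}_{\phi h}(-\lambda)}=-I$, $\phi_1\xi=0$; that $\phi_2$ is the unique one with $\phi_2|_{{\mathcal D}_{h}(\lambda)}=I$, $\phi_2|_{{\mathcal D}_{h}(-\lambda)}=-I$, $\phi_2\xi=0$; and that $\phi_3=\phi_1\phi_2$.

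First I would verify that $\frac{1}{\sqrt{1-\kappa}}\,h$ meets the three requirements singling out $\phi_2$. By Theorem~\ref{teoremagreci}, in a non-Sasakian contact metric $(\kappa,\mu)$-space the operator $h$ restricted to the contact distribution $\mathcal D$ is diagonalizable with eigenvalues $\pm\lambda$, $\lambda=\sqrt{1-\kappa}$, the associated eigendistributions being precisely ${\mathcal D}_{h}(\pm\lambda)$; moreover $h\xi=0$. Hence $\frac{1}{\sqrt{1-\kappa}}\,h$ restricts to $\pm I$ on ${\mathcal D}_{h}(\pm\lambda)$ and vanishes on $\mathbb{R}\xi$, so by the above uniqueness $\phi_2=\frac{1}{\sqrt{1-\kappa}}\,h$. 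Exactly the same argument, now invoking Theorem~\ref{main1} (which asserts that $\phi h$ restricted to $\mathcal D$ is diagonalizable with eigenvalues $\pm\lambda$ and eigendistributions ${\mathcal D}_{\phi h}(\pm\lambda)$, and that $\phi h\,\xi=0$), gives $\phi_1=\frac{1}{\sqrt{1-\kappa}}\,\phi h$.

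It then remains only to compute $\phi_3$. We have
\[
\phi_3=\phi_1\phi_2=\frac{1}{1-\kappa}\,\phi\,h^{2},
\]
and substituting the identity $h^{2}=(\kappa-1)\phi^{2}$ of \cite{BKP-95} yields $\phi_3=-\phi^{3}$. Since $\phi^{2}=-I+\eta\otimes\xi$ by \eqref{almostcontact} and $\phi\xi=0$, we get $\phi^{3}=\phi\circ\phi^{2}=-\phi$, hence $\phi_3=\phi$, which completes the identification.

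I do not expect a genuine obstacle: everything reduces to the eigenvalue descriptions of $h$ and $\phi h$ already available from Theorems~\ref{teoremagreci} and~\ref{main1} and to the relation $h^2=(\kappa-1)\phi^2$, the only point needing a little care being the sign bookkeeping in the last step. As an alternative, one could avoid quoting Proposition~\ref{legendre2} and instead check directly, from $h^2=(\kappa-1)\phi^2$ and $\phi h=-h\phi$, that $\phi_1^2=\phi_2^2=I-\eta\otimes\xi$, $\phi_1\phi_2=-\phi_2\phi_1=\phi$, and that $\phi_3=\phi$ is a compatible almost contact structure; this re-establishes that $(\phi_1,\phi_2,\phi_3)$ is an almost bi-paracontact structure, and by construction its eigendistributions are the four Legendre foliations of Theorem~\ref{main1}, so it coincides with the standard one.
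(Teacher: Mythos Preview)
Your proposal is correct and is precisely the argument the paper has in mind: the paper states that ``one can easily prove the following result'' immediately after invoking Proposition~\ref{legendre2}, and your write-up simply makes this explicit by identifying the $\pm1$-eigendistributions of $\frac{1}{\sqrt{1-\kappa}}\phi h$ and $\frac{1}{\sqrt{1-\kappa}}h$ with ${\mathcal D}_{\phi h}(\pm\lambda)$ and ${\mathcal D}_{h}(\pm\lambda)$ via Theorems~\ref{teoremagreci} and~\ref{main1}, and then computing $\phi_1\phi_2=\phi$ from $h^2=(\kappa-1)\phi^2$.
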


According to the notation used in $\S$ \ref{biparacontact} we denote
by ${\mathcal D}_{1}^{\pm}$ and ${\mathcal D}_{2}^{\pm}$ the
eigendistributions of $\phi_{1}$ and $\phi_{2}$, respectively,
corresponding to the eigenvalue $\pm 1$. So ${\mathcal
D}_{1}^{\pm}={\mathcal D}_{\phi h}(\pm\lambda)$ and ${\mathcal
D}_{2}^{\pm}={\mathcal D}_{h}(\pm\lambda)$. Then, according to
Theorem \ref{main2}, \eqref{distribuzione1}--\eqref{distribuzione2}
should be compared to Proposition \ref{proprieta2}.

\begin{remark}\label{osservazione}
For each $\alpha\in\left\{1,2\right\}$ we can define a
semi-Riemannian metric $g_{\alpha}$ by setting
\begin{equation}\label{semiriemannian}
g_{\alpha}(X,Y):=d\eta(X,\phi_{\alpha}Y)+\eta(X)\eta(Y)
\end{equation}
for all $X,Y\in\Gamma(TM)$. Then it is easy to check that
$(\phi_{\alpha},\xi,\eta,g_{\alpha})$ is a paracontact metric
structure on $M$. In fact
$(\phi_{\alpha},\xi,\eta,g_{\alpha})=\Psi({\mathcal
D}_{\alpha}^{+},{\mathcal D}_{\alpha}^{-})$ according to the
notation used in $\S$ \ref{preliminari}. Let $\bar\nabla^{pc}$ and
$\nabla^{pc}$ denote the canonical paracontact connections
associated to the paracontact metric structures
$(\phi_{1},\xi,\eta,g_{1})$ and $(\phi_{2},\xi,\eta,g_{2})$,
respectively (cf. Theorem \ref{paratanaka}). Then, since ${\mathcal
D}_{1}^{\pm}$ and ${\mathcal D}_{2}^{\pm}$ are integrable, Theorem
\ref{connection} implies that $\nabla^{pc}=\nabla^{bl}$ and
$\bar\nabla^{pc}=\bar\nabla^{bl}$, where $\nabla^{bl}$ denotes the
bi-Legendrian connection corresponding to the bi-Legendrian
structure $({\mathcal D}_{h}(\lambda),{\mathcal D}_{h}(-\lambda))$
and $\bar\nabla^{bl}$ the bi-Legendrian connection associated to
$({\mathcal D}_{\phi h}(\lambda),{\mathcal D}_{\phi h}(-\lambda))$.
In particular, by \eqref{paratorsion} we have that
\begin{equation}\label{torsioni}
\bar{T}^{bl}(\cdot,\xi)=-\phi_{1}h_{1}, \ \ \
{T}^{bl}(\cdot,\xi)=-\phi_{2}h_{2},
\end{equation}
where $\bar{T}^{bl}$ and $T^{bl}$ denote the torsion tensor fields
of $\bar\nabla^{bl}$ and $\nabla^{bl}$, respectively.
\end{remark}

The bi-Legendrian structure $({\mathcal D}_{2}^{+},{\mathcal
D}_{2}^{-})$ was deeply studied in \cite{Mino-Luigia-07} and
\cite{Mino-toap1}. In the sequel we study the ``new'' bi-Legendrian
structure, $({\mathcal D}_{1}^{+},{\mathcal D}_{1}^{-})$.

\begin{theorem}
The Legendre foliations ${\mathcal D}_{\phi h}(\lambda)$ and
${\mathcal D}_{\phi h}(-\lambda)$ are either non-degenerate or flat.
In particular, ${\mathcal D}_{\phi h}(\lambda)$ and ${\mathcal
D}_{\phi h}(-\lambda)$ are positive definite if and only if $I_M>0$,
negative definite if and only if $I_M<0$, flat if and only if
$I_M=0$.
\end{theorem}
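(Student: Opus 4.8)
The plan is to compute Pang's bilinear form $\Pi_{\mathcal F}$ (see \eqref{invariante3}) for the two Legendre foliations ${\mathcal D}_{\phi h}(\lambda)$ and ${\mathcal D}_{\phi h}(-\lambda)$ and to show that, under the natural isomorphisms of Theorem \ref{main1}, it becomes a constant multiple of the Riemannian metric $g$; since $g$ restricted to the eigendistributions ${\mathcal D}_h(\pm\lambda)$ of $h$ is positive definite, the assertion will then follow by reading off the sign of that constant. Recall from Theorem \ref{main1} that every section of ${\mathcal D}_{\phi h}(\lambda)$ (resp. ${\mathcal D}_{\phi h}(-\lambda)$) has the form $X+\phi X$ with $X\in\Gamma({\mathcal D}_h(\lambda))$ (resp. $Y+\phi Y$ with $Y\in\Gamma({\mathcal D}_h(-\lambda))$) and that $X\mapsto X+\phi X$ is a bundle isomorphism ${\mathcal D}_h(\pm\lambda)\to{\mathcal D}_{\phi h}(\pm\lambda)$, with inverse $Z\mapsto\frac{1}{2}(Z-\phi Z)$; hence it suffices to evaluate $\Pi_{\mathcal F}$ on such vectors.

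First I would compute the bracket $[\xi,X+\phi X]$. Since $2h={\mathcal L}_\xi\phi$, one has $[\xi,\phi X]=({\mathcal L}_\xi\phi)X+\phi[\xi,X]=2hX+\phi[\xi,X]$, so $[\xi,X+\phi X]=(I+\phi)[\xi,X]+2hX$, which equals $(I+\phi)[\xi,X]+2\lambda X$ on ${\mathcal D}_h(\lambda)$ and $(I+\phi)[\xi,Y]-2\lambda Y$ on ${\mathcal D}_h(-\lambda)$. Substituting into $\Pi_{\mathcal F}(\cdot,\cdot)=2d\eta([\xi,\cdot],\cdot)$ and using the elementary identities $d\eta(\phi Z,\phi W)=d\eta(Z,W)$ and $d\eta(\phi Z,W)+d\eta(Z,\phi W)=0$ for $Z,W\in\Gamma({\mathcal D})$, together with $d\eta(Z,\phi W)=-g(Z,W)$ and the Legendre condition $d\eta|_{{\mathcal D}_h(\pm\lambda)}=0$, a short computation yields
\begin{equation*}
\Pi_{{\mathcal D}_{\phi h}(\pm\lambda)}(X+\phi X,\,X'+\phi X')=2\,\Pi_{{\mathcal D}_h(\pm\lambda)}(X,X')\mp 4\lambda\,g(X,X').
\end{equation*}

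It then remains to evaluate $\Pi_{{\mathcal D}_h(\pm\lambda)}$. From \eqref{acca2} one has $[\xi,X]=\nabla^g_\xi X+(1\pm\lambda)\phi X$ for $X\in\Gamma({\mathcal D}_h(\pm\lambda))$, while $g(\nabla^g_\xi X,\xi)=-g(X,\nabla^g_\xi\xi)=0$. Using the standard identity $\nabla^g_\xi h=\mu h\phi$ of a contact metric $(\kappa,\mu)$-space (\cite{BKP-95}) and the fact that $\phi$ interchanges ${\mathcal D}_h(\lambda)$ and ${\mathcal D}_h(-\lambda)$, one checks, by decomposing $\nabla^g_\xi X$ along $TM={\mathcal D}_h(\lambda)\oplus{\mathcal D}_h(-\lambda)\oplus\mathbb{R}\xi$, that $\nabla^g_\xi X=A-\frac{\mu}{2}\phi X$ with $A\in\Gamma({\mathcal D}_h(\pm\lambda))$; since $d\eta$ vanishes on pairs of vectors in ${\mathcal D}_h(\pm\lambda)$ while $d\eta(\phi X,X')=g(X,X')$, this gives $\Pi_{{\mathcal D}_h(\pm\lambda)}(X,X')=2\bigl(1\pm\lambda-\frac{\mu}{2}\bigr)g(X,X')$. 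Plugging this into the displayed formula, the $\pm\lambda$-terms cancel and we obtain, for both foliations,
\begin{equation*}
\Pi_{{\mathcal D}_{\phi h}(\pm\lambda)}(X+\phi X,\,X'+\phi X')=4\Bigl(1-\frac{\mu}{2}\Bigr)g(X,X')=4\sqrt{1-\kappa}\;I_M\;g(X,X').
\end{equation*}

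Since $\sqrt{1-\kappa}>0$ and $g$ is positive definite on ${\mathcal D}_h(\pm\lambda)$, the last identity shows that $\Pi_{{\mathcal D}_{\phi h}(\pm\lambda)}$ is positive definite, negative definite, or identically zero according as $I_M>0$, $I_M<0$, or $I_M=0$; in particular it is always either non-degenerate or flat, which is exactly the statement (recall that $\mathcal F$ is flat precisely when $\Pi_{\mathcal F}\equiv 0$). I expect the main obstacle to be the computation of the $\phi X$-component of $\nabla^g_\xi X$, that is, pinning down the coefficient $-\mu/2$: this is the step where the $(\kappa,\mu)$-nullity condition genuinely enters, and it is what makes the final answer depend precisely on $1-\frac{\mu}{2}$, equivalently on the Boeckx invariant $I_M$, rather than on some other combination of $\kappa$ and $\mu$.
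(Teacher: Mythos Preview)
Your argument is correct. The route differs from the paper's: the paper works directly on ${\mathcal D}_{\phi h}(\pm\lambda)$, computing $R^{g}(X,\xi)\xi$ in two ways (from the $(\kappa,\mu)$-nullity condition and from \eqref{acca2}) to extract the ${\mathcal D}_{\phi h}(-\lambda)$-component of $[\xi,X]$, and then evaluates $\Pi$ from \eqref{invariante3}. You instead pull everything back through the isomorphism $X\mapsto X+\phi X$ of Theorem \ref{main1}, reduce $\Pi_{{\mathcal D}_{\phi h}(\pm\lambda)}$ to $\Pi_{{\mathcal D}_{h}(\pm\lambda)}$ plus a correction $\mp 4\lambda\,g$, and compute the latter via the known identity $\nabla^{g}_{\xi}h=\mu h\phi$ from \cite{BKP-95}. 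Your approach makes transparent why the final answer depends only on $1-\frac{\mu}{2}$ (the $\pm\lambda$ terms cancel between the two pieces), and it recycles the Pang invariant of $({\mathcal D}_{h}(\lambda),{\mathcal D}_{h}(-\lambda))$ already available in \cite{Mino-toap1}; the paper's curvature computation is more self-contained but hides this cancellation. Note that your constant $4\sqrt{1-\kappa}\,I_M\,g(X,X')$ is expressed with $X,X'\in{\mathcal D}_{h}(\pm\lambda)$; since $g(X+\phi X,X'+\phi X')=2g(X,X')$, in terms of vectors of ${\mathcal D}_{\phi h}(\pm\lambda)$ this reads $2(1-\frac{\mu}{2})\,g$, which agrees (up to a harmless positive factor $\sqrt{1-\kappa}$) with the paper's displayed formula $2I_M\,g$.
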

\begin{proof}
Let $X\in\Gamma({\mathcal D}_{\phi h}(\lambda))$. Then the
$(\kappa,\mu)$-nullity condition becomes
\begin{equation}\label{condizione1}
R^{g}(X, \xi)\xi=\kappa X+\mu h X.
\end{equation}
On the other hand,
\begin{align}\label{condizione2}
\nonumber R^{g}(X,\xi)\xi&=-\nabla^{g}_{\xi}\nabla^{g}_{X}\xi-\nabla^{g}_{[X,\xi]}\xi\\
&=-\nabla^{g}_{\xi}\phi X + \nabla^{g}_{\xi}\phi h X + \phi[X,\xi] +
\phi h[X,\xi] \nonumber \\
&=\nabla^{g}_{\phi X}\xi + [\xi,\phi X] + \lambda\nabla^{g}_{X}\xi +
\lambda[\xi,X] + \phi[X,\xi] + \lambda[X,\xi]_{{\mathcal D}_{\phi
h}(\lambda)} - \lambda[X,\xi]_{{\mathcal D}_{\phi h}(-\lambda)}\\
\nonumber &=-\phi^2 X -\phi h \phi X +[\xi,\phi X] + \lambda(-\phi
X-\phi h X) + \lambda[\xi,X]-\phi[\xi,X]-\lambda[\xi,X]_{{\mathcal
D}_{\phi
h}(\lambda)}\\
&\quad+\lambda[\xi,X]_{{\mathcal D}_{\phi h}(-\lambda)} \nonumber\\
&=X+\lambda\phi X + 2h X - \lambda\phi X - \lambda X +
2\lambda[\xi,X]_{{\mathcal D}_{\phi h}(-\lambda)}. \nonumber
\end{align}
Thus \eqref{condizione1} and \eqref{condizione2} imply
\begin{equation*}
\kappa\phi X+\mu\phi h X = (1-\lambda)\phi X+2\phi h
X+2\lambda\phi[\xi,X]_{{\mathcal D}_{\phi h}(-\lambda)},
\end{equation*}
from which it follows that
\begin{equation*}
\phi[\xi,X]_{{\mathcal D}_{\phi
h}(-\lambda)}=\frac{1-\sqrt{1-\kappa}}{2}\phi X -
\frac{1-\frac{\mu}{2}}{\sqrt{1-\kappa}}X=\frac{1-\sqrt{1-\kappa}}{2}\phi
X - I_{M}X.
\end{equation*}
Therefore, by \eqref{invariante3}, we have, for any
$X,X'\in\Gamma({\mathcal D}_{\phi h}(\lambda))$,
\begin{align}\label{invariante1}
\nonumber\Pi_{{\mathcal D}_{\phi
h}(\lambda)}(X,X')&=2g([\xi,X]_{{\mathcal
D}_{\phi h}(-\lambda)},\phi X')\\
&=-2g(\phi[\xi,X]_{{\mathcal D}_{\phi h}(-\lambda)},X')\\
&=-(1-\sqrt{1-\kappa})g(\phi X,X') + 2 I_{M}g(X,X')\nonumber\\
&={2{I_{M}}}g(X,X')\nonumber.
\end{align}
Similarly, one can prove that, for any $Y,Y'\in\Gamma({\mathcal
D}_{\phi h}(-\lambda))$,
\begin{equation}\label{invariante2}
\Pi_{{\mathcal D}_{\phi h}(-\lambda)}(Y,Y')={2{I_{M}}}g(Y,Y').
\end{equation}
The assertion of the theorem then easily follows from the
expressions \eqref{invariante1}, \eqref{invariante2} of the Pang
invariant of the Legendre foliations ${\mathcal D}_{\phi
h}(\lambda)$, ${\mathcal D}_{\phi h}(-\lambda)$.
\end{proof}

Since any (non-Sasakian) contact metric $(\kappa,\mu)$-space
$(M,\phi,\xi,\eta,g)$ is canonically endowed with an almost
bi-paracontact manifold, it admits the linear connections
$\nabla^{1}$, $\nabla^{2}$, $\nabla^{3}$ stated in Theorem
\ref{connessioni} and, moreover, the canonical connection
$\nabla^{c}$ defined in Theorem \ref{connessioneobata}. On the other
hand, to $M$ it is attached also the bi-Legendrian connection
$\nabla^{bl}$ corresponding to the bi-Legendrian structure
$({\mathcal D}_{h}(\lambda),{\mathcal D}_{h}(-\lambda))$, as well as
the bi-Legendrian connection $\bar\nabla^{bl}$ associated with
$({\mathcal D}_{\phi h}(\lambda),{\mathcal D}_{\phi h}(-\lambda))$.
We now find the relations between these connections.

\begin{lemma}
Let $(M,\phi,\xi,\eta,g)$ be a non-Sasakian contact metric
$(\kappa,\mu)$-space and $(\phi_1,\phi_2,\phi_3)$ its standard
almost bi-paracontact structure. Then, for the operators
$h_\alpha:=\frac{1}{2}{\mathcal L}_{\xi}\phi_\alpha$,
$\alpha\in\left\{1,2,3\right\}$, we have
\begin{gather}
h_{1}=-{I_{M}}h=-\left(1-\frac{\mu}{2}\right)\phi_{2},\label{operatore}\\
h_{2}={I_{M}}{\phi
h}+\sqrt{1-\kappa}\phi=\left(1-\frac{\mu}{2}\right)\phi_{1}+\sqrt{1-\kappa}\phi_{3},\label{operatore2}\\
h_{3}=h=\sqrt{1-\kappa}\phi_{2}.\label{operatore3}
\end{gather}
\end{lemma}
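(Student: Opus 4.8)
The plan is to reduce the whole statement to the single computation of $\mathcal{L}_\xi h$, since $h_3$ is immediate and both $h_1$ and $h_2$ then follow by purely algebraic manipulations using the defining relations $\phi_1=\frac{1}{\sqrt{1-\kappa}}\phi h$, $\phi_2=\frac{1}{\sqrt{1-\kappa}}h$, $\phi_3=\phi$, together with the standard identities $\phi^2=-I+\eta\otimes\xi$, $h\phi=-\phi h$, $h^2=(\kappa-1)\phi^2$ valid on any contact metric $(\kappa,\mu)$-space (\cite{BKP-95}). First, since by definition $h_3=\frac{1}{2}\mathcal{L}_\xi\phi_3=\frac{1}{2}\mathcal{L}_\xi\phi=\frac{1}{2}N^{(3)}_\phi=h$, and $h=\sqrt{1-\kappa}\,\phi_2$, equation \eqref{operatore3} follows at once.

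Next I would compute $\mathcal{L}_\xi h$. Writing $(\mathcal{L}_\xi h)X=[\xi,hX]-h[\xi,X]=(\nabla^g_\xi h)X-\nabla^g_{hX}\xi+h\nabla^g_X\xi$ and inserting $\nabla^g\xi=-\phi-\phi h$ (formula \eqref{acca2}), the identity $\nabla^g_\xi h=\mu h\phi$ valid on any contact metric $(\kappa,\mu)$-space (\cite{BKP-95}), and the consequence $\phi h^2 X=(1-\kappa)\phi X$ of $h^2=(\kappa-1)\phi^2$, after cancelling the $\eta\otimes\xi$ terms one is left with
\[
\mathcal{L}_\xi h=(2-\mu)\phi h+2(1-\kappa)\phi.
\]
Dividing by $2\sqrt{1-\kappa}$ and recognising that $\frac{2-\mu}{2\sqrt{1-\kappa}}=\frac{1-\frac{\mu}{2}}{\sqrt{1-\kappa}}=I_M$ gives $h_2=\frac{1}{2}\mathcal{L}_\xi\phi_2=I_M\,\phi h+\sqrt{1-\kappa}\,\phi$; rewriting $\phi h=\sqrt{1-\kappa}\,\phi_1$ and $\phi=\phi_3$ yields \eqref{operatore2}.

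Finally, for $h_1=\frac{1}{2}\mathcal{L}_\xi\phi_1=\frac{1}{2\sqrt{1-\kappa}}\mathcal{L}_\xi(\phi h)$ I would expand $\mathcal{L}_\xi(\phi h)=(\mathcal{L}_\xi\phi)\circ h+\phi\circ(\mathcal{L}_\xi h)=2h^2+\phi\bigl((2-\mu)\phi h+2(1-\kappa)\phi\bigr)$. Using $2h^2=2(1-\kappa)(I-\eta\otimes\xi)$, $\phi^2 h=-h$ and $\phi^2=-(I-\eta\otimes\xi)$, the two $(1-\kappa)(I-\eta\otimes\xi)$ contributions cancel and there remains $\mathcal{L}_\xi(\phi h)=-(2-\mu)h$, whence $h_1=-\frac{2-\mu}{2\sqrt{1-\kappa}}h=-I_M h=-\bigl(1-\frac{\mu}{2}\bigr)\phi_2$, which is \eqref{operatore}.

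The only non-routine input is the identity $\nabla^g_\xi h=\mu h\phi$ on a contact metric $(\kappa,\mu)$-space (equivalently, the formula for $\mathcal{L}_\xi h$ obtained by combining it with \eqref{acca2}); it can itself be recovered from the general contact-metric expression for $\nabla^g_\xi h$ together with the $(\kappa,\mu)$-condition \eqref{eq-km}. Beyond that, the only thing to watch carefully is the sign bookkeeping and the $\eta\otimes\xi$ correction terms that appear whenever $\phi^2$ or $h^2$ is replaced; I expect this to be the main source of potential error, though it is entirely mechanical.
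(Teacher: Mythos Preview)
Your proof is correct. The computations for $h_2$ via $\mathcal{L}_\xi h=(2-\mu)\phi h+2(1-\kappa)\phi$ and for $h_1$ via the Leibniz expansion $\mathcal{L}_\xi(\phi h)=(\mathcal{L}_\xi\phi)h+\phi(\mathcal{L}_\xi h)$ are accurate, and the sign and $\eta\otimes\xi$ bookkeeping is handled properly.

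Your route differs from the paper's in two respects. For $h_2$ the paper simply cites \cite[Lemma~4.5]{Mino-toap2}, whereas you supply the computation from scratch using the identity $\nabla^{g}_{\xi}h=\mu h\phi$ (which, as you note, follows from the general contact-metric formula $\nabla^{g}_{\xi}h=\phi-\phi h^{2}-\phi l$ together with the $(\kappa,\mu)$-condition); this makes your argument self-contained. For $h_1$ the paper instead invokes the purely algebraic relations of Lemma~\ref{lemma1}~(b), e.g.\ $\phi_{2}h_{3}+h_{2}\phi_{3}=-h_{1}$, and reads off $h_1$ from the already-known $h_2$ and $h_3$. Your direct computation is slightly longer but avoids appealing to that lemma; conversely, the paper's approach is shorter once $h_2$ is granted and illustrates how Lemma~\ref{lemma1} packages such identities. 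Either method is entirely adequate here.
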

\begin{proof}
The proof of \eqref{operatore2} is given in \cite[Lemma
4.5]{Mino-toap2} whereas \eqref{operatore3} is obvious. Then by
using Lemma \ref{lemma1} one can prove \eqref{operatore}.
\end{proof}

Substituting \eqref{operatore}--\eqref{operatore3} in (ii) of
Theorem \ref{connessioni} we get the following corollary.

\begin{corollary}
Let $(M,\phi,\xi,\eta,g)$ be a non-Sasakian contact metric
$(\kappa,\mu)$-space and $(\phi_1,\phi_2,\phi_3)$ its standard
almost bi-paracontact structure. The corresponding connections
$\nabla^{1}$, $\nabla^{2}$, $\nabla^{3}$ stated in Theorem
\ref{connessioni} satisfy the following relations:
\begin{gather}
\nabla^{1}\phi_{1}=0, \ \
\nabla^{1}\phi_{2}=2{\sqrt{1-\kappa}}{\eta\otimes\phi_{3}}, \ \
\nabla^{1}\phi_{3}=2\sqrt{1-\kappa}\eta\otimes\phi_{2},\label{relazione1}\\
\nabla^{2}\phi_{1}=0, \ \ \ \ \nabla^{2}\phi_{2}=0, \ \ \ \
\nabla^{2}\phi_{3}=0,\label{relazione2}\\
\nabla^{3}\phi_{1}=-(2-\mu)\eta\otimes\phi_{2}, \ \
\nabla^{3}\phi_{2}=(2-\mu)\eta\otimes\phi_{1}, \ \
\nabla^{3}\phi_{3}=0.\label{relazione3}
\end{gather}
\end{corollary}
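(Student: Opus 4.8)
The plan is to obtain the displays \eqref{relazione1}, \eqref{relazione2} and \eqref{relazione3} by substituting the expressions \eqref{operatore}, \eqref{operatore2}, \eqref{operatore3} for $h_{1}$, $h_{2}$, $h_{3}$ directly into the formulas for $\nabla^{\alpha}\phi_{\beta}$ collected in part (ii) of Theorem \ref{connessioni}. Since that theorem already gives $\nabla^{1}\phi_{1}=\nabla^{2}\phi_{2}=\nabla^{3}\phi_{3}=0$, only the six remaining ``off-diagonal'' identities have to be verified, and each of them reduces to simplifying an expression of the shape $2h_{\gamma}\pm\bigl(h_{\alpha}\phi_{\beta}-\phi_{\beta}h_{\alpha}\bigr)$.

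First I would record, once and for all, the algebraic identities among $\phi_{1}$, $\phi_{2}$, $\phi_{3}$ that will be used repeatedly, namely $\phi_{1}\phi_{2}=\phi_{3}=-\phi_{2}\phi_{1}$, $\phi_{1}\phi_{3}=-\phi_{3}\phi_{1}=\phi_{2}$ and $\phi_{3}\phi_{2}=-\phi_{2}\phi_{3}=\phi_{1}$ (noted just after the definition of an almost bi-paracontact structure), together with $\phi_{\alpha}^{2}=I-\eta\otimes\xi$ for $\alpha\in\{1,2,3\}$. With these at hand, every product $h_{\alpha}\phi_{\beta}$ and $\phi_{\beta}h_{\alpha}$ appearing in (ii) expands, through \eqref{operatore}--\eqref{operatore3}, into a linear combination of $\phi_{1}$, $\phi_{2}$, $\phi_{3}$ and $I-\eta\otimes\xi$.

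Then I would carry out the six substitutions one at a time. For instance, for $\nabla^{1}\phi_{2}=\eta\otimes\bigl(2h_{2}-h_{1}\phi_{3}+\phi_{3}h_{1}\bigr)$, using $h_{1}=-\bigl(1-\frac{\mu}{2}\bigr)\phi_{2}$ together with $\phi_{2}\phi_{3}=-\phi_{1}$ and $\phi_{3}\phi_{2}=\phi_{1}$ one obtains $h_{1}\phi_{3}=\bigl(1-\frac{\mu}{2}\bigr)\phi_{1}$ and $\phi_{3}h_{1}=-\bigl(1-\frac{\mu}{2}\bigr)\phi_{1}$, so that $-h_{1}\phi_{3}+\phi_{3}h_{1}=-2\bigl(1-\frac{\mu}{2}\bigr)\phi_{1}$ cancels the $\phi_{1}$-term of $2h_{2}=2\bigl(1-\frac{\mu}{2}\bigr)\phi_{1}+2\sqrt{1-\kappa}\,\phi_{3}$ and leaves $\nabla^{1}\phi_{2}=2\sqrt{1-\kappa}\,\eta\otimes\phi_{3}$. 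The same cancellation pattern recurs in every case: the combination $h_{\alpha}\phi_{\beta}-\phi_{\beta}h_{\alpha}$ turns out to be either identically zero or a scalar multiple of a single $\phi_{\gamma}$, and collecting the surviving terms gives \eqref{relazione1}; the identical bookkeeping yields \eqref{relazione2}, where all the off-diagonal combinations vanish, and \eqref{relazione3}. Alternatively, the relations of Lemma \ref{lemma1}(b) can be used to shorten some of these manipulations, exactly as in the computation of $\nabla^{c}\phi_{\alpha}$.

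There is no genuine obstacle here: the verification is entirely mechanical once the multiplication table of $\phi_{1}$, $\phi_{2}$, $\phi_{3}$ has been written down. The only point requiring a little care is to keep track of the $\eta\otimes\xi$ contributions arising from $\phi_{\gamma}^{2}=I-\eta\otimes\xi$; in each of the six identities these contributions enter $h_{\alpha}\phi_{\beta}$ and $\phi_{\beta}h_{\alpha}$ symmetrically and hence cancel in their difference (the operators $2h_{\gamma}$ themselves, being linear combinations of $\phi_{1},\phi_{2},\phi_{3}$, carry no such term), so they never reach the stated formulas.
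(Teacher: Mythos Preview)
Your proposal is correct and follows exactly the approach of the paper, which simply states that the corollary is obtained by substituting \eqref{operatore}--\eqref{operatore3} into part (ii) of Theorem \ref{connessioni}. Your write-up merely makes explicit the bookkeeping that the paper leaves to the reader.
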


\begin{proposition}\label{connessioni1}
With the notation above, $\nabla^{bl}=\nabla^{2}$ and
$\bar\nabla^{bl}=\nabla^{1}$.
\end{proposition}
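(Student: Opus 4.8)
The plan is to show that the bi-Legendrian connection $\nabla^{bl}$ associated with $({\mathcal D}_h(\lambda),{\mathcal D}_h(-\lambda))=({\mathcal D}_2^+,{\mathcal D}_2^-)$ coincides with the connection $\nabla^2$ of Theorem \ref{connessioni}, and similarly $\bar\nabla^{bl}$ (associated with $({\mathcal D}_{\phi h}(\lambda),{\mathcal D}_{\phi h}(-\lambda))=({\mathcal D}_1^+,{\mathcal D}_1^-)$) coincides with $\nabla^1$. The natural strategy is a uniqueness argument: I would verify that $\nabla^{bl}$ satisfies the three characterizing properties (i)--(iii) of $\nabla^2$ in Theorem \ref{connessioni}, and then invoke the uniqueness assertion of that theorem to conclude $\nabla^{bl}=\nabla^2$; the case $\bar\nabla^{bl}=\nabla^1$ is entirely parallel, with the roles of $\phi_1$ and $\phi_2$ interchanged.

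First I would check property (i), $\nabla^2\xi=0$: this is immediate from property (ii) of Theorem \ref{biconnection}, which gives $\nabla^{bl}\xi=0$. Next, property (ii) of Theorem \ref{connessioni} for $\alpha=2$ reads $\nabla^2\phi_1=\eta\otimes(2h_1+h_2\phi_3-\phi_3 h_2)$, $\nabla^2\phi_2=0$, $\nabla^2\phi_3=\eta\otimes(2h_3+h_2\phi_1-\phi_1 h_2)$. Since $\nabla^{bl}$ preserves the subbundles $L_1={\mathcal D}_2^+$ and $L_2={\mathcal D}_2^-$ (property (i) of Theorem \ref{biconnection}) and $\phi_2|_{{\mathcal D}_2^+}=I$, $\phi_2|_{{\mathcal D}_2^-}=-I$, $\phi_2\xi=0$, it follows directly that $\nabla^{bl}\phi_2=0$; this is also exactly the content of part (a) of Theorem \ref{connection} applied to the paracontact structure $(\phi_2,\xi,\eta,g_2)=\Psi({\mathcal D}_2^+,{\mathcal D}_2^-)$ from Remark \ref{osservazione}. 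For the two remaining identities $\nabla^2\phi_1$ and $\nabla^2\phi_3$, I would compute $\nabla^{bl}\phi_1$ and $\nabla^{bl}\phi_3$ directly, splitting vector fields according to $TM={\mathcal D}_2^+\oplus{\mathcal D}_2^-\oplus\mathbb{R}\xi$ and using the explicit torsion formula \eqref{torsione} together with $\nabla^{bl}_XY=[X,Y]_{L_2}$ for $X\in\Gamma(L_1)$, $Y\in\Gamma(L_2)$, etc.; the nontrivial terms collapse onto $\mathbb{R}\xi$-directions and, after invoking the relations \eqref{operatore}--\eqref{operatore3} of the preceding Lemma (which express $h_1,h_2,h_3$ in terms of $\phi_1,\phi_2,\phi_3$), reproduce the stated expressions. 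Finally, for property (iii) of Theorem \ref{connessioni} with $\alpha=2$, I would use the explicit torsion \eqref{torsione} of $\nabla^{bl}$ together with \eqref{torsioni} of Remark \ref{osservazione}, namely $T^{bl}(\cdot,\xi)=-\phi_2 h_2$, and the fact that $T^{bl}(X,Y)=2d\eta(X,Y)\xi$ for $X\in\Gamma({\mathcal D}_2^+)$, $Y\in\Gamma({\mathcal D}_2^-)$; a short case analysis on whether the arguments lie in ${\mathcal D}_2^{+}$, ${\mathcal D}_2^{-}$, or $\mathbb{R}\xi$ then yields the required identity $T^2(\phi_2 X,Y)-T^2(X,\phi_2 Y)=2(d\eta(\phi_2 X,Y)-d\eta(X,\phi_2 Y))\xi+\eta(Y)h_2 X+\eta(X)h_2 Y$.

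The main obstacle I anticipate is the verification of property (ii), specifically the bookkeeping needed to show that $\nabla^{bl}\phi_1$ and $\nabla^{bl}\phi_3$ have exactly the prescribed $\eta\otimes(\cdots)$ form: one must carefully track the $\mathbb{R}\xi$-components of Lie brackets of sections of ${\mathcal D}_2^{\pm}$ and recognize them, via \eqref{invariante3} and the $(\kappa,\mu)$-nullity condition, as the operators $h_1$ and $h_3$ acting appropriately — essentially the same computation that produced \eqref{torsioni}. Once (i)--(iii) are confirmed, the uniqueness clause of Theorem \ref{connessioni} closes the argument at no further cost. Alternatively, and perhaps more cleanly, since ${\mathcal D}_2^{\pm}$ are integrable, part (b) of Theorem \ref{connection} gives $\nabla^{bl}=\nabla^{pc}$ (the canonical paracontact connection of $(\phi_2,\xi,\eta,g_2)$), so one may instead match $\nabla^{pc}$ against the characterization of $\nabla^2$ using the explicit formula \eqref{paradefinition}; I would mention this as the quicker route. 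The identity $\bar\nabla^{bl}=\nabla^1$ follows verbatim after exchanging $1\leftrightarrow 2$ and using $\bar T^{bl}(\cdot,\xi)=-\phi_1 h_1$ from \eqref{torsioni}.
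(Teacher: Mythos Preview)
Your strategy for $\nabla^{bl}=\nabla^{2}$ is exactly the paper's: verify axioms (i)--(iii) of Theorem \ref{connessioni} and invoke uniqueness. The paper shortcuts the verification of (ii) by quoting the known fact $\nabla^{bl}\phi=\nabla^{bl}h=0$ (proved in \cite{Mino-07} for the $({\mathcal D}_{h}(\lambda),{\mathcal D}_{h}(-\lambda))$ bi-Legendrian connection), which immediately gives $\nabla^{bl}\phi_{\alpha}=0$ for all $\alpha$, matching \eqref{relazione2}; your direct computation would recover the same conclusion but is heavier.

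Where your proposal diverges from the paper is the second identity. The paper does \emph{not} argue $\bar\nabla^{bl}=\nabla^{1}$ by symmetry; instead it compares the two bi-Legendrian connections directly. It introduces the difference tensor $S:=\nabla^{bl}-\bar\nabla^{bl}$ and shows $S=0$ on $\mathcal D$ and $S(\xi,\cdot)=-\phi h$ by building an auxiliary connection (equal to $\nabla^{bl}$ along $\mathcal D$ and to $\bar\nabla^{bl}$ along $\xi$) and checking it satisfies the axioms of Theorem \ref{biconnection} for $({\mathcal D}_{\phi h}(\lambda),{\mathcal D}_{\phi h}(-\lambda))$. From this one reads off $\bar\nabla^{bl}_{\xi}\phi=2h$ and $\bar\nabla^{bl}_{\xi}h=2(1-\kappa)\phi$, which is precisely \eqref{relazione1}, and the uniqueness of $\nabla^{1}$ finishes the job. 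Your claim that the second case follows ``verbatim after exchanging $1\leftrightarrow 2$'' is therefore too quick: the situation is genuinely asymmetric, since \eqref{relazione2} says $\nabla^{2}\phi_{\alpha}=0$ for all $\alpha$ while \eqref{relazione1} has nonzero $\xi$-derivatives, and there is no pre-existing analogue of the cited result $\nabla^{bl}\phi=\nabla^{bl}h=0$ for $\bar\nabla^{bl}$ (indeed $\bar\nabla^{bl}_{\xi}\phi\neq 0$). Your method---direct verification of (i)--(iii) for $\bar\nabla^{bl}$---would still succeed, but it is a fresh computation rather than a formal swap, and the paper's comparison argument is what actually supplies the missing $\xi$-direction information without redoing the analysis from scratch.
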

\begin{proof}
First notice that $\nabla^{bl}$ satisfies the axioms (i), (ii),
(iii) of Theorem \ref{connessioni} characterizing $\nabla^{2}$.
Indeed by definition $\nabla^{bl}\xi=0$. Next,
$\nabla^{bl}\phi=\nabla^{bl}h=0$ (\cite{Mino-07}) so that, tacking
\eqref{relazione2} into account,
$\nabla^{bl}\phi_{\alpha}=0=\nabla^{2}\phi_{\alpha}$ for each
$\alpha\in\left\{1,2,3\right\}$. Finally, by using the expression
\eqref{torsione} of $T^{bl}$, a direct computation shows that also
(iii) is satisfied. Then $\nabla^{bl}=\nabla^{2}$. \ As second step
we prove that if $S$ denotes the $(1,1)$-type tensor field given by
$S(X,Y):=\nabla^{bl}_{X}Y-\bar\nabla^{bl}_{X}Y$, then we have
\begin{equation}\label{differenza}
S(\cdot,\xi)=0,  \ \ \ \ S(\xi,\cdot)=-\phi h, \ \ \ \ S=0 \
\textrm{on} \ {\mathcal D}.
\end{equation}
Obviously $S(\cdot,\xi)=0$. In order to prove the remaining
relations, let us define a linear connection $\nabla'$ on $M$ by
putting
\begin{equation*}
\nabla'_{E}F:=\left\{
                \begin{array}{ll}
                  \nabla^{bl}_{E}F, & \hbox{for $E\in\Gamma({\mathcal D})$, $F\in\Gamma(TM)$;} \\
                  \bar\nabla^{bl}_{E}F, & \hbox{for $E\in\Gamma(\mathbb{R}\xi)$, $F\in\Gamma(TM)$.}
                \end{array}
              \right.
\end{equation*}
We prove that $\nabla'=\bar\nabla^{bl}$ by checking that $\nabla'$
satisfies the axioms which characterize the bi-Legendrian connection
associated with the bi-Legendrian structure $({\mathcal D}_{\phi
h}(\lambda),{\mathcal D}_{\phi h}(-\lambda))$. First, we prove that
$\nabla'$ preserves the Legendre foliations ${\mathcal D}_{\phi
h}(\lambda)$ and ${\mathcal D}_{\phi h}(-\lambda)$. Due to
\eqref{distribuzione1} any vector field tangent to ${\mathcal
D}_{\phi h}(\lambda)$ has the form $X+\phi X$ for some
$X\in\Gamma({\mathcal D}_{h}(\lambda))$. Then, for any
$Z\in\Gamma({\mathcal D})$, we have $\nabla'_{Z}(X+\phi
X)=\nabla'_{Z}X+\nabla'_{Z}\phi X=\nabla'_{Z}X+\nabla^{bl}_{Z}\phi
X=\nabla'_{Z}X+\phi\nabla^{bl}_{Z}X=\nabla'_{Z}X+\phi\nabla'_{Z}X$.
Since $\nabla'_{Z}X=\nabla^{bl}_{Z}X\in\Gamma({\mathcal
D}_{h}(\lambda))$, we conclude that $\nabla'_{Z}(X+\phi
X)\in\Gamma({\mathcal D}_{\phi h}(\lambda))$. Thus
$\nabla'_{Z}{\mathcal D}_{\phi h}(\lambda)\subset{\mathcal D}_{\phi
h}(\lambda)$. Moreover, $\nabla'_{\xi}{\mathcal D}_{\phi
h}(\lambda)=\bar\nabla_{\xi}{\mathcal D}_{\phi
h}(\lambda)\subset{\mathcal D}_{\phi h}(\lambda)$. Analogously one
can prove that $\nabla'$ preserves ${\mathcal D}_{\phi
h}(-\lambda)$. Next, $\nabla'd\eta=0$ since $\nabla^{bl}d\eta=0$ and
$\bar\nabla^{bl}d\eta=0$. Finally, one can easily prove that
$T'(Z,\xi)=\bar{T}^{bl}(Z,\xi)=[\xi,Z_{{\mathcal D}_{\phi
h}(\lambda)}]_{{\mathcal D}_{\phi h}(-\lambda)}+[\xi,Z_{{\mathcal
D}_{\phi h}(-\lambda)}]_{{\mathcal D}_{\phi h}(\lambda)}$ and
$T'(Z,Z')=T^{bl}(Z,Z')=2d\eta(Z,Z')\xi$ for any
$Z,Z'\in\Gamma({\mathcal D})$. Thus, by Theorem \ref{biconnection},
$\nabla'=\bar\nabla^{bl}$ and hence
 $S=0$ on $\mathcal D$. Finally, by  \eqref{torsioni}
\begin{equation*}
\nabla^{bl}_{\xi}Z=\nabla^{bl}_{Z}\xi-T^{bl}(Z,\xi)-[Z,\xi]=\phi_{2}h_{2}Z+[\xi,Z]
\end{equation*}
and, analogously,
\begin{equation*}
\bar\nabla^{bl}_{\xi}Z=\phi_{1}h_{1}Z+[\xi,Z].
\end{equation*}
Therefore, by using \eqref{operatore} and \eqref{operatore2}, one
finds $S(\xi,Z)=\phi_{2}h_{2}Z-\phi_{1}h_{1}Z=-\phi h Z$. Thus
\eqref{differenza} is completely proved. In particular, one obtains
\begin{equation}\label{formulacurvatura4}
\bar\nabla^{bl}_{\xi}\phi=\nabla^{bl}_{\xi}\phi+\phi h\phi-\phi^2
h=2h
\end{equation}
and
\begin{equation}\label{formulacurvatura4bis}
\bar\nabla^{bl}_{\xi}h=\nabla^{bl}_{\xi}h+\phi h^2-h\phi h=2\phi
h^2=2(1-\kappa)\phi.
\end{equation}
Then $\bar\nabla^{bl}$ satisfies \eqref{relazione1}. Since it easily
satisfies also the other two conditions which uniquely define the
connection $\nabla^{1}$, we conclude that
$\bar\nabla^{bl}=\nabla^{1}$.
\end{proof}

The paracontact metric structure $(\phi_{2},\xi,\eta,g_{2})$ defined
in Remark \ref{osservazione} was studied  in \cite{Mino-toap2}. Now
we are able to study $(\phi_{1},\xi,\eta,g_{1})$. We show that both
the paracontact metric structures satisfy a nullity condition.

\begin{theorem}\label{indotte}
Let $(M,\phi,\xi,\eta,g)$ be a non-Sasakian contact metric
$(\kappa,\mu)$-space and let $(\phi_{1},\phi_{2},\phi_{3})$ be its
standard almost bi-paracontact structure. Let $g_1$ and $g_2$ denote
the semi-Riemannian metrics defined by \eqref{semiriemannian},
compatible with the almost paracontact structures $\phi_1$ and
$\phi_2$, respectively. Then the paracontact metric structures
$(\phi_\alpha,\xi,\eta,g_\alpha)$, $\alpha\in\left\{1,2\right\}$,
satisfy
\begin{equation*}
R^{g_\alpha}(X,Y)\xi=\kappa_{\alpha}(\eta(Y)X-\eta(X)Y)+\mu_{\alpha}(\eta(Y)h_{\alpha}X-\eta(X)h_{\alpha}Y)
\end{equation*}
where
\begin{gather}
\kappa_{1}=\left(1-\frac{\mu}{2}\right)^2-1, \ \ \
\mu_{1}=2(1-\sqrt{1-\kappa}),\label{valori01}\\
\kappa_{2}=\kappa-2+\left(1-\frac{\mu}{2}\right)^2, \ \ \
\mu_{2}=2.\label{valori02}
\end{gather}
Furthermore, $I_{M}=0$ if and only if $(\phi_{1},\xi,\eta,g_{1})$ is
para-Sasakian.
\end{theorem}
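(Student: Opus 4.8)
The plan is to apply the general curvature identity \eqref{formulacurvatura1} to each of the two paracontact metric structures $(\phi_\alpha,\xi,\eta,g_\alpha)$, $\alpha\in\{1,2\}$, observing that the tensor $\tilde h$ occurring there is exactly the operator $h_\alpha=\frac{1}{2}\mathcal{L}_\xi\phi_\alpha$ whose explicit form is given by \eqref{operatore}--\eqref{operatore3}. Thus I need two ingredients: the covariant derivatives $\nabla^{g_\alpha}\phi_\alpha$ and $\nabla^{g_\alpha}h_\alpha$. The first is immediate: by Theorem \ref{main1} the distributions $\mathcal{D}_\alpha^\pm$ are Legendre foliations, so $(\phi_\alpha,\xi,\eta,g_\alpha)$ is integrable and \eqref{integrabile2} gives $(\nabla^{g_\alpha}_X\phi_\alpha)Y=\eta(Y)(X-h_\alpha X)-g_\alpha(X-h_\alpha X,Y)\xi$.

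For the second ingredient I would write, using \eqref{paradefinition}, $\nabla^{g_\alpha}$ as $\nabla^{pc}_\alpha$ minus the explicit contorsion tensor built from $\phi_\alpha$, $h_\alpha$, $g_\alpha$; then, by Remark \ref{osservazione} and Proposition \ref{connessioni1}, $\nabla^{pc}_2=\nabla^{bl}=\nabla^2$ and $\nabla^{pc}_1=\bar\nabla^{bl}=\nabla^1$. Since, by \eqref{operatore}--\eqref{operatore3}, each $h_\alpha$ is a \emph{constant}-coefficient combination of $\phi_1,\phi_2,\phi_3$, the relations \eqref{relazione1}--\eqref{relazione3} yield $\nabla^{pc}_2 h_2=0$ and $\nabla^{pc}_1 h_1=-2\left(1-\frac{\mu}{2}\right)\sqrt{1-\kappa}\,\eta\otimes\phi_3$, i.e. $\nabla^{pc}_1 h_1$ is nonzero only in the $\xi$-direction. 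Plugging this into the contorsion formula and using $h_\alpha\xi=0$, $\eta\circ h_\alpha=0$, the $g_\alpha$-symmetry of $h_\alpha$, $h_\alpha\phi_\alpha=-\phi_\alpha h_\alpha$ (Lemma \ref{lemma1}(a)) and $\phi_\alpha^2=I-\eta\otimes\xi$ produces a closed expression for $(\nabla^{g_\alpha}_X h_\alpha)Y$.

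Substituting both ingredients into \eqref{formulacurvatura1}: the two terms $(\nabla^{g_\alpha}_X\phi_\alpha)h_\alpha Y-(\nabla^{g_\alpha}_Y\phi_\alpha)h_\alpha X$ cancel, being multiples of $\xi$ through $g_\alpha(X-h_\alpha X,h_\alpha Y)$, which is symmetric in $X,Y$; the pair $-(\nabla^{g_\alpha}_X\phi_\alpha)Y+(\nabla^{g_\alpha}_Y\phi_\alpha)X$ equals $\eta(X)Y-\eta(Y)X+\eta(Y)h_\alpha X-\eta(X)h_\alpha Y$; and the last term $\phi_\alpha\big((\nabla^{g_\alpha}_X h_\alpha)Y-(\nabla^{g_\alpha}_Y h_\alpha)X\big)$ is where the constants enter. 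Using $\phi_\alpha^2 h_\alpha=h_\alpha$, $\phi_\alpha\xi=0$ and the algebraic identities $h_1^2=\left(1-\frac{\mu}{2}\right)^2(I-\eta\otimes\xi)$, $h_2^2=\left(\left(1-\frac{\mu}{2}\right)^2-(1-\kappa)\right)(I-\eta\otimes\xi)$ (which follow at once from \eqref{operatore}--\eqref{operatore3}), this term simplifies to a combination of $\eta(Y)X-\eta(X)Y$ and $\eta(Y)h_\alpha X-\eta(X)h_\alpha Y$; for $\alpha=1$ there is an extra contribution to the latter coming from the $\xi$-part of $\nabla^{pc}_1 h_1$, which accounts for $\mu_1\neq 2=\mu_2$. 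Collecting coefficients gives precisely \eqref{valori01}--\eqref{valori02}. The main obstacle I anticipate is exactly this last bookkeeping in the $\alpha=1$ case: carrying the $\xi$-direction term of $\nabla^{pc}_1 h_1$ through the contorsion tensor and the outer $\phi_1$ without sign errors.

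For the para-Sasakian statement, observe that by \eqref{operatore} $h_1=-I_M\,h$ and $h\neq 0$ ($M$ being non-Sasakian), so $h_1=0$ if and only if $I_M=0$. If $I_M=0$, then $(\phi_1,\xi,\eta,g_1)$ is $K$-paracontact, and since it is also integrable (its eigendistributions $\mathcal{D}_1^\pm=\mathcal{D}_{\phi h}(\pm\lambda)$ are foliations by Theorem \ref{main1}), Corollary \ref{paracontatto4} yields that it is para-Sasakian. Conversely, a para-Sasakian structure is normal, so $h_1=\frac{1}{2}N^{(3)}_{\phi_1}=0$ and hence $I_M=0$.
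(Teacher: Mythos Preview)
Your proposal is correct and follows essentially the same route as the paper's proof: use integrability of $(\phi_\alpha,\xi,\eta,g_\alpha)$ to get $\nabla^{g_\alpha}\phi_\alpha$ via \eqref{integrabile2}, identify $\nabla^{pc}_\alpha$ with $\nabla^\alpha$ via Remark \ref{osservazione} and Proposition \ref{connessioni1}, compute $\nabla^{pc}_\alpha h_\alpha$ from \eqref{operatore}--\eqref{operatore3} and \eqref{relazione1}--\eqref{relazione2}, and substitute everything into \eqref{formulacurvatura1}; the para-Sasakian equivalence is handled exactly as in the paper through $h_1=-I_M h$ and Corollary \ref{paracontatto4}. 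The only cosmetic difference is that the paper cites the case $\alpha=2$ from \cite{Mino-toap2} rather than redoing it, whereas you treat both cases uniformly.
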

\begin{proof}
For the case $\alpha=2$ the assertion was already proved in
\cite{Mino-toap2}. We prove the case $\alpha=1$. First notice that,
as ${\mathcal D}_{1}^{+}$ and ${\mathcal D}_{1}^{-}$ are involutive,
the paracontact metric structure $(\phi_{1},\xi,\eta,g_{1})$
satisfies \eqref{integrabile2} (cf. \cite{zamkovoy}). Then by
\eqref{paradefinition} we have that
\begin{align}\label{formulacurvatura2}
\nonumber(\nabla^{g_{1}}_{X}h_{1})Y&=(\bar\nabla^{pc}_{X}h_{1})Y-2\eta(X)\phi_{1}h_{1}Y-\eta(Y)\phi_{1}h_{1}X+\eta(Y)\phi_{1}h_{1}^{2}X-g_{1}(X,\phi_{1}h_{1}Y)\xi\\
&\quad+g_{1}(h_{1}X,\phi_{1}h_{1}Y)\xi.
\end{align}
Moreover, due to \eqref{operatore} and Proposition
\ref{connessioni1} we get
\begin{equation}\label{formulacurvatura5}
(\bar\nabla^{pc}_{X}h_{1})Y=(\bar\nabla^{bl}_{X}h_{1})Y=(\nabla^{1}_{X}h_{1})Y=-\left(1-\frac{\mu}{2}\right)(\nabla^{1}_{X}\phi_{2})Y=(\mu-2)\sqrt{1-\kappa}\eta(X)\phi_{3}Y.
\end{equation}
Thus, by replacing \eqref{integrabile2}, \eqref{formulacurvatura2}
and \eqref{formulacurvatura5}  in \eqref{formulacurvatura1} we find
\begin{align*}
R^{g_{1}}(X,Y)\xi&=-\eta(Y)(X-h_{1}X)+g_{1}(X-h_{1}X,Y)\xi+\eta(X)(Y-h_{1}Y)-g_{1}(Y-h_{1}Y,X)\xi\\
&\quad-g_{1}(X-h_{1}X,h_{1}Y)\xi+\phi_{1}((\bar\nabla^{pc}_{X}h_{1})Y)-2\eta(X)\phi_{1}^{2}h_{1}Y+\eta(Y)\phi_{1}h_{1}\phi_{1}X\\
&\quad+\eta(Y)\phi_{1}^{2}h_{1}X+g_{1}(Y-h_{1}Y,h_{1}X)\xi-\phi_{1}((\bar\nabla^{pc}_{Y}h_{1})X)+2\eta(Y)\phi_{1}^{2}h_{1}X\\
&\quad-\eta(X)\phi_{1}h_{1}\phi_{1}Y-\eta(X)\phi_{1}^{2}h_{1}Y\\
&=-\eta(Y)X+\eta(X)Y+(\mu-2)\sqrt{1-\kappa}\eta(X)\phi_{1}\phi_{3}Y-2\eta(X)\phi_{1}^{2}h_{1}Y+\eta(Y)\phi_{1}^{2}h_{1}^{2}X\\
&\quad-(\mu-2)\sqrt{1-\kappa}\eta(Y)\phi_{1}\phi_{3}X+2\eta(Y)\phi_{1}^{2}h_{1}X-\eta(X)\phi_{1}^{2}h_{1}^{2}Y
\end{align*}
\begin{align*}
&=-\eta(Y)X+\eta(X)Y+(\mu-2)\sqrt{1-\kappa}\eta(X)\phi_{2}Y-2\eta(X)h_{1}Y-\left(1-\frac{\mu}{2}\right)^2\eta(Y)\phi_{2}^{2}X\\
&\quad-(\mu-2)\sqrt{1-\kappa}\eta(Y)\phi_{2}X+2\eta(Y)h_{1}X+\left(1-\frac{\mu}{2}\right)^{2}\eta(X)\phi_{2}^{2}Y\\
&=-\eta(Y)X+\eta(X)Y+2\sqrt{1-\kappa}\eta(X)h_{1}Y-2\eta(X)h_{1}Y+\left(1-\frac{\mu}{2}\right)^{2}\eta(Y)X\\
&\quad-2\sqrt{1-\kappa}\eta(Y)h_{1}X+2\eta(Y)h_{1}X-\left(1-\frac{\mu}{2}\right)^{2}\eta(X)Y\\
&=\left(\left(1-\frac{\mu}{2}\right)^{2}-1\right)(\eta(Y)X-\eta(X)Y)+2(1-\sqrt{1-\kappa})(\eta(Y)h_{1}X-\eta(X)h_{1}Y).
\end{align*}
For the last assertion in the statement of the theorem, we have that
$I_{M}=0$ if and only if $\mu=2$, i.e., by \eqref{operatore}, if and
only if $h_{1}=0$. As the paracontact metric structure
$(\phi_{1},\xi,\eta,g_{1})$ is integrable, the assert follows from
Corollary \ref{paracontatto4}.
\end{proof}

We now study the special properties of the connection $\nabla^{c}$
(cf. Theorem \ref{connessioneobata}) associated to the standard
almost bi-paracontact structure $(\phi_1,\phi_2,\phi_3)$ of a
(non-Sasakian) contact metric $(\kappa,\mu)$-space
$(M,\phi,\xi,\eta,g)$. We call $\nabla^{c}$ the \emph{ canonical
connection of the contact metric $(\kappa,\mu)$-space $M$}.

\begin{lemma}
The torsion tensor field of the canonical connection of a
non-Sasakian contact metric $(\kappa,\mu)$-space
$(M,\phi,\xi,\eta,g)$ is given by
\begin{align}
T^{c}(X,Y)&=\frac{2}{3}\left(\eta(Y)\left(\left(1-\frac{\mu}{2}\right)\phi
X+\phi h X\right)-\eta(X)\left(\left(1-\frac{\mu}{2}\right)\phi
Y+\phi h Y\right)\right)\nonumber\\
&\quad+2d\eta(X,Y)\xi.\label{torsione6}
\end{align}
In particular,
\begin{equation}\label{torsioneobata1}
T^{c}(X,\xi)=\frac{2}{3}\left(\left(1-\frac{\mu}{2}\right)\phi
X+\phi h X\right).
\end{equation}
\end{lemma}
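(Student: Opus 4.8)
The plan is to evaluate directly the torsion formula \eqref{torsioneobata2},
\[
T^{c}(X,Y)=d\eta(X,Y)\xi+\frac{1}{3}\bigl(-d\eta(\phi_{1}X,\phi_{1}Y)-d\eta(\phi_{2}X,\phi_{2}Y)+d\eta(\phi_{3}X,\phi_{3}Y)\bigr)\xi+\frac{1}{6}\bigl(-N^{(1)}_{\phi_1}-N^{(1)}_{\phi_2}+N^{(1)}_{\phi_3}\bigr)(X,Y),
\]
using that, by Theorem \ref{main1}, the standard almost bi\-paracontact structure of a non-Sasakian contact metric $(\kappa,\mu)$-space is integrable, hence in particular Legendrian. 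For the $d\eta$-part, \eqref{legendrian} gives $d\eta(\phi_1X,\phi_1Y)=d\eta(\phi_2X,\phi_2Y)=-d\eta(\phi_3X,\phi_3Y)=-d\eta(X,Y)$; substituting, the first two summands collapse to $2d\eta(X,Y)\xi$, which is exactly the last term of \eqref{torsione6}.

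For the Nijenhuis part I would first observe that, by Corollary \ref{proprieta7}, each $N^{(1)}_{\phi_\alpha}$, $\alpha\in\{1,2,3\}$, vanishes on the contact distribution $\mathcal{D}$. Since $N^{(1)}_{\phi_\alpha}$ is a skew-symmetric tensor and $\phi_\alpha\xi=0$, decomposing $X,Y$ along $\mathcal{D}\oplus\mathbb{R}\xi$ reduces it to its value on the Reeb direction,
\[
N^{(1)}_{\phi_\alpha}(X,Y)=2\eta(X)\,h_\alpha\phi_\alpha Y-2\eta(Y)\,h_\alpha\phi_\alpha X,
\]
where I use $N^{(1)}_{\phi_\alpha}(\xi,Z)=N^{(3)}_{\phi_\alpha}(\phi_\alpha Z)=2h_\alpha\phi_\alpha Z$ for $Z\in\Gamma(\mathcal{D})$ — this is the relation recalled just before Proposition \ref{normality} for $\alpha\in\{1,2\}$, and it holds for the almost contact tensor $\phi_3$ as well by the identical short computation (the $2d\eta(\xi,Z)\xi$ term vanishes, and one uses $\mathcal{L}_\xi\eta=0$ together with $\mathcal{L}_\xi(\phi_3^{2})=0$). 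Summing with signs, $\bigl(-N^{(1)}_{\phi_1}-N^{(1)}_{\phi_2}+N^{(1)}_{\phi_3}\bigr)(X,Y)=2\eta(Y)\bigl(h_1\phi_1+h_2\phi_2-h_3\phi_3\bigr)X-2\eta(X)\bigl(h_1\phi_1+h_2\phi_2-h_3\phi_3\bigr)Y$.

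It then remains to compute the operator $h_1\phi_1+h_2\phi_2-h_3\phi_3$. Inserting the formulas \eqref{operatore}--\eqref{operatore3} for $h_1,h_2,h_3$ and using $\phi_1\phi_2=\phi_3$, $\phi_3\phi_2=\phi_1$, $\phi_2\phi_3=-\phi_1$ together with the anti-commutativity of $\phi_1$ and $\phi_2$, one gets $h_1\phi_1=(1-\frac{\mu}{2})\phi_3$, $h_2\phi_2=(1-\frac{\mu}{2})\phi_3+\sqrt{1-\kappa}\,\phi_1$ and $h_3\phi_3=-\sqrt{1-\kappa}\,\phi_1$, whence $h_1\phi_1+h_2\phi_2-h_3\phi_3=2(1-\frac{\mu}{2})\phi_3+2\sqrt{1-\kappa}\,\phi_1=2\bigl((1-\frac{\mu}{2})\phi+\phi h\bigr)$, since $\phi_3=\phi$ and $\sqrt{1-\kappa}\,\phi_1=\phi h$. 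Combining this with the $d\eta$-computation above yields \eqref{torsione6}, and \eqref{torsioneobata1} follows at once by setting $Y=\xi$ and using $\phi\xi=h\xi=0$ and $d\eta(X,\xi)=0$. I do not expect any genuine obstacle; the only mildly delicate point is checking that $N^{(1)}_{\phi_3}(\xi,\cdot)=2h_3\phi_3(\cdot)$ for the almost contact tensor $\phi_3$, which is the same elementary computation already used for the paracontact tensors.
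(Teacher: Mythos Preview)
Your argument is correct and follows essentially the same route as the paper: start from the torsion formula of Theorem \ref{connessioneobata}, use \eqref{legendrian} to collapse the $d\eta$-terms to $2d\eta(X,Y)\xi$, reduce the tensors $N^{(1)}_{\phi_\alpha}$ to their values along $\xi$, and then substitute the explicit expressions \eqref{operatore}--\eqref{operatore3} for $h_1,h_2,h_3$. The only cosmetic difference is that the paper quotes the ready-made formulas \eqref{contatto3} and \eqref{paracontatto3} for $N^{(1)}_{\phi_\alpha}$ (valid because the contact metric and the two associated paracontact metric structures are integrable), obtaining terms in $\phi_\alpha h_\alpha$, whereas you appeal to Corollary \ref{proprieta7} together with the identity $N^{(1)}_{\phi_\alpha}(\xi,\cdot)=N^{(3)}_{\phi_\alpha}(\phi_\alpha\cdot)$ and obtain the equivalent terms in $h_\alpha\phi_\alpha$; your variant has the slight advantage of not invoking the compatible metrics $g_\alpha$ at all.
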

\begin{proof}
First of all notice that, being the almost bi-paracontact structure
$(\phi_{1},\phi_{2},\phi_{3})$ integrable, \eqref{legendrian} holds.
Then by replacing \eqref{contatto3}, \eqref{paracontatto3},
\eqref{legendrian} into (iii) of Theorem \ref{connessioneobata} we
obtain
\begin{align}\
T^{c}(X,Y)&=2d\eta(X,Y)\xi+\frac{1}{6}\bigl(-2\eta(Y)\phi_{1}h_{1}X+2\eta(X)\phi_{1}h_{1}Y-2\eta(Y)\phi_{2}h_{2}X\nonumber\\
&\quad+2\eta(X)\phi_{2}h_{2}Y+2\eta(Y)\phi_{3}h_{3}X-2\eta(X)\phi_{3}h_{3}Y\bigr).\label{torsione5}
\end{align}
By substituting \eqref{operatore} and \eqref{operatore2} in
\eqref{torsione5}, a straightforward computation yields
\eqref{torsione6}.
\end{proof}

\begin{proposition}\label{connessioni2}
With the notation above, we have for any $X,Y\in\Gamma({\mathcal
D})$,
\begin{equation*}
\nabla^{c}_{X}Y=\nabla^{1}_{X}Y=\nabla^{2}_{X}Y=\nabla^{3}_{X}Y.
\end{equation*}
\end{proposition}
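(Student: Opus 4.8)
The plan is to check that, as soon as both arguments lie in the contact distribution $\mathcal{D}$, the four connections $\nabla^{1},\nabla^{2},\nabla^{3},\nabla^{c}$ all parallelize $\phi_{1},\phi_{2},\phi_{3}$ and carry the same torsion, and that these two facts already pin them down on $\mathcal{D}$. \emph{Step 1 (parallelism on $\mathcal{D}$).} For $X\in\Gamma(\mathcal{D})$ one has $\eta(X)=0$, so every correction term in (ii) of Theorem \ref{connessioni} -- each of the form $\eta\otimes(\,\cdot\,)$, cf.\ \eqref{relazione1}--\eqref{relazione3} for the standard structure -- vanishes, and likewise $\nabla^{c}\phi_{\alpha}=\frac{2}{3}\eta\otimes h_{\alpha}$ vanishes on $\mathcal{D}$ by (ii) of Theorem \ref{connessioneobata}. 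Hence $\nabla^{1}_{X},\nabla^{2}_{X},\nabla^{3}_{X},\nabla^{c}_{X}$ all preserve $\phi_{1}$ and $\phi_{2}$ for $X\in\Gamma(\mathcal{D})$; in particular they preserve the eigendistributions $\mathcal{D}^{\pm}_{1}$ of $\phi_{1}$, so that $\nabla^{\bullet}_{X}Y\in\Gamma(\mathcal{D})$ whenever $X,Y\in\Gamma(\mathcal{D})$.

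\emph{Step 2 (common torsion on $\mathcal{D}$).} Since the standard almost bi-paracontact structure is integrable, Corollary \ref{proprieta7} gives $N^{(1)}_{\phi_{1}}=N^{(1)}_{\phi_{2}}=N^{(1)}_{\phi_{3}}=0$ on $\mathcal{D}$. Substituting this into the torsion formulas \eqref{torsione1}--\eqref{torsione3} of Proposition \ref{torsione7} and discarding the $h_{\alpha}$-terms (which all carry a factor $\eta(X)$ or $\eta(Y)$), one is left for $X,Y\in\Gamma(\mathcal{D})$ with expressions of the shape $\bigl(d\eta(X,Y)\mp d\eta(\phi_{\alpha}X,\phi_{\alpha}Y)\bigr)\xi$, which by \eqref{legendrian} all equal $2d\eta(X,Y)\xi$. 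Thus $T^{1}=T^{2}=T^{3}=2d\eta(\cdot,\cdot)\xi$ on $\mathcal{D}$; the same holds for $T^{c}$, either because $\nabla^{c}$ is the barycenter of $\nabla^{1},\nabla^{2},\nabla^{3}$ or directly from \eqref{torsione6} with $\eta(X)=\eta(Y)=0$.

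\emph{Step 3 (uniqueness).} Fix $\alpha,\beta\in\{1,2,3\}$ and set $A(X,Y):=\nabla^{\alpha}_{X}Y-\nabla^{\beta}_{X}Y$ for $X,Y\in\Gamma(\mathcal{D})$. By Step 2, $A(X,Y)-A(Y,X)=T^{\alpha}(X,Y)-T^{\beta}(X,Y)=0$, so $A$ is symmetric, and by Step 1 it is $\mathcal{D}$-valued and satisfies $A(X,\phi_{\gamma}Y)=\phi_{\gamma}A(X,Y)$ for $\gamma\in\{1,2\}$, hence also $A(\phi_{\gamma}X,Y)=\phi_{\gamma}A(X,Y)$ by symmetry. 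Therefore $\phi_{3}A(X,Y)=A(\phi_{1}X,\phi_{2}Y)=\phi_{2}A(\phi_{1}X,Y)=\phi_{2}\phi_{1}A(X,Y)=-\phi_{3}A(X,Y)$, so $\phi_{3}A(X,Y)=0$; since $A(X,Y)\in\Gamma(\mathcal{D})$ and $\phi_{3}|_{\mathcal{D}}=\phi|_{\mathcal{D}}$ is an isomorphism, $A$ vanishes. This yields $\nabla^{1}=\nabla^{2}=\nabla^{3}$ on $\mathcal{D}$, whence $\nabla^{c}$, their barycenter, agrees with them on $\mathcal{D}$ as well. (The equality $\nabla^{1}=\nabla^{2}$ on $\mathcal{D}$ may alternatively be read off from Proposition \ref{connessioni1} together with the relation $S=0$ on $\mathcal{D}$ established in \eqref{differenza}.)

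I do not expect a genuine obstacle: the argument is essentially bookkeeping, checking that the $\eta\otimes(\,\cdot\,)$ correction terms and the Nijenhuis terms all vanish on $\mathcal{D}$. The only point needing a little care is the last step, where one must know in advance that $A$ is $\mathcal{D}$-valued so that $\phi_{3}$ can legitimately be cancelled -- which is precisely why Step 1 has to precede Step 3.
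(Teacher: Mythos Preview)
Your proof is correct, and it takes a genuinely different route from the paper's. The paper relies on the bi-Legendrian machinery: it defines an auxiliary connection $\nabla'$ that agrees with $\nabla^{bl}$ on $\mathcal{D}$ and with $\nabla^{c}$ in the $\xi$-direction, checks that $\nabla'$ satisfies the defining axioms of $\nabla^{c}$, and then invokes Proposition \ref{connessioni1} (namely $\nabla^{bl}=\nabla^{2}$ and $\bar\nabla^{bl}=\nabla^{1}$ together with $S=0$ on $\mathcal{D}$) to identify the remaining connections on $\mathcal{D}$; $\nabla^{3}$ is handled last via the barycenter relation. You bypass the bi-Legendrian connections entirely: you observe directly that on $\mathcal{D}$ all four connections parallelize $\phi_{1},\phi_{2},\phi_{3}$ and share the torsion $2d\eta(\cdot,\cdot)\xi$, and then rerun the uniqueness argument from Theorem \ref{connessioneobata} restricted to $\mathcal{D}$. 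Your approach is more self-contained and in fact proves a stronger statement --- the conclusion holds for \emph{any} integrable almost bi-paracontact structure, not just the standard one on a $(\kappa,\mu)$-space --- whereas the paper's approach has the virtue of explicitly linking $\nabla^{c}$ to $\nabla^{bl}$, a connection already well understood in this setting.
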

\begin{proof}
Let $\nabla'$ be the linear connection defined by
\begin{equation*}
\nabla'_{E}F:=\left\{
                \begin{array}{ll}
                  \nabla^{bl}_{E}F, & \hbox{if $E\in\Gamma({\mathcal D})$;} \\
                  \nabla^{c}_{E}F, & \hbox{if $E\in\Gamma(\mathbb{R}\xi)$.}
                \end{array}
              \right.
\end{equation*}
We check that $\nabla'$ satisfies (i), (ii), (iii) of Theorem
\ref{connessioneobata}. First of all, obviously $\nabla'\xi=0$.
Next, for all $X,Y\in\Gamma({\mathcal D})$, by \eqref{torsione6},
$T'(X,Y)=T^{bl}(X,Y)=2d\eta(X,Y)\xi=T^{c}(X,Y)$ and
$T'(X,\xi)=T^{c}(X,\xi)$. Finally, for all $X,Y\in\Gamma({\mathcal
D})$, we have
$(\nabla'_{X}\phi_\alpha)Y=(\nabla^{bl}_{X}\phi_\alpha)Y=0=(\nabla^{c}_{X}\phi_{\alpha})Y$
for each $\alpha\in\left\{1,2,3\right\}$, since
$\nabla^{bl}\phi=\nabla^{bl}h=0$. Moreover, by definition,
$(\nabla'_{\xi}\phi_{\alpha})X=(\nabla^{c}_{\xi}\phi_{\alpha})X$.
Thus by the uniqueness of $\nabla^{c}$ we have that
$\nabla'=\nabla^{c}$. Then, since by Proposition \ref{connessioni1}
$\nabla^{bl}=\nabla^{2}$, we have that $\nabla^{2}$ and $\nabla^{c}$
coincide on the contact distribution. Moreover, Proposition
\ref{connessioni1} and \eqref{differenza} imply that also
$\nabla^{1}=\bar\nabla^{bl}$ and $\nabla^{c}$ coincide on $\mathcal
D$. The same property is then necessarily satisfied by $\nabla^{3}$
since $\nabla^{c}$ is the barycenter of $\nabla^{1}$, $\nabla^{2}$,
$\nabla^{3}$.
\end{proof}

\begin{corollary}
The canonical connection $\nabla^{c}$ of a contact metric
$(\kappa,\mu)$-space $(M,\phi,\xi,\eta,g)$ is a \emph{contact
connection}, i.e. $\nabla^{c}\eta=\nabla^{c}d\eta=0$, and satisfies
\begin{align}
\nabla^{c}\phi_{1}&=\quad\quad\quad\quad -\frac{2}{3}\left(1-\frac{\mu}{2}\right)\eta\otimes\phi_{2}\label{condizioniobata01}\\
\nabla^{c}\phi_{2}&=\frac{2}{3}\left(1-\frac{\mu}{2}\right)\eta\otimes\phi_{1} \quad\quad +\frac{2}{3}\sqrt{1-\kappa}\eta\otimes\phi_{3}\label{condizioniobata02}\\
\nabla^{c}\phi_{3}&=\quad\quad\quad\quad\quad
\frac{2}{3}\sqrt{1-\kappa}\eta\otimes\phi_{2}\label{condizioniobata03}
\end{align}
\end{corollary}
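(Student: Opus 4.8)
The plan is to establish the three assertions in turn, relying on what has already been proved about the standard almost bi-paracontact structure. The formulas \eqref{condizioniobata01}--\eqref{condizioniobata03} require essentially no work: by property (ii) of Theorem \ref{connessioneobata} one has $\nabla^{c}\phi_{\alpha}=\frac{2}{3}\eta\otimes h_{\alpha}$ for each $\alpha\in\{1,2,3\}$, so inserting the expressions \eqref{operatore}, \eqref{operatore2}, \eqref{operatore3} for $h_{1},h_{2},h_{3}$ yields the three displayed identities at once.

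Next I would prove $\nabla^{c}\eta=0$ by showing that $\nabla^{c}$ preserves the decomposition $TM=\mathcal D\oplus\mathbb{R}\xi$. For $Z\in\Gamma(\mathcal D)$, Proposition \ref{connessioni2} together with Proposition \ref{connessioni1} (which gives $\nabla^{bl}=\nabla^{2}$) and $\nabla^{c}\xi=\nabla^{bl}\xi=0$ imply $\nabla^{c}_{Z}=\nabla^{bl}_{Z}$ on all of $TM$; since $\nabla^{bl}$ preserves $\mathcal D$ and $\xi$ by Theorem \ref{biconnection}, it also preserves $\eta$, whence $(\nabla^{c}_{Z}\eta)(X)=(\nabla^{bl}_{Z}\eta)(X)=0$. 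In the remaining direction, from $\nabla^{c}\xi=0$ and \eqref{torsioneobata1} we get $\nabla^{c}_{\xi}X=[\xi,X]-T^{c}(X,\xi)$ with $T^{c}(X,\xi)=\frac{2}{3}\bigl((1-\frac{\mu}{2})\phi X+\phi hX\bigr)\in\Gamma(\mathcal D)$, while $\eta([\xi,X])=\xi(\eta(X))$ because $i_{\xi}d\eta=0$; therefore $(\nabla^{c}_{\xi}\eta)(X)=\xi(\eta(X))-\eta(\nabla^{c}_{\xi}X)=0$.

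The substantive step is $\nabla^{c}d\eta=0$, which I would split the same way. For $Z\in\Gamma(\mathcal D)$ the equality $\nabla^{c}_{Z}=\nabla^{bl}_{Z}$ reduces the claim to $\nabla^{bl}d\eta=0$, which is part (ii) of Theorem \ref{biconnection}. For the direction $\xi$, using $\nabla^{c}_{\xi}X=[\xi,X]-T^{c}(X,\xi)$ and $\mathcal{L}_{\xi}d\eta=0$, the expression $(\nabla^{c}_{\xi}d\eta)(X,Y)$ collapses to $\frac{2}{3}\bigl(d\eta(FX,Y)+d\eta(X,FY)\bigr)$, where $F:=(1-\frac{\mu}{2})\phi+\phi h$, so it remains to check $d\eta(FX,Y)+d\eta(X,FY)=0$ for all $X,Y$. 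Writing $d\eta=\Phi=g(\cdot,\phi\cdot)$ and using that $\phi$ is $g$-skew-symmetric, that $h$ is $g$-symmetric with $h\xi=0$, and that $h\phi=-\phi h$, one checks — after expanding $\phi^{2}=-I+\eta\otimes\xi$ and invoking the compatibility relation \eqref{metric1} — that both $d\eta(\phi X,Y)+d\eta(X,\phi Y)$ and $d\eta(\phi hX,Y)+d\eta(X,\phi hY)$ vanish identically; this finishes the proof.

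The main obstacle will be this last $\xi$-direction computation: one must first identify $\nabla^{c}_{\xi}$ correctly through the torsion formula \eqref{torsioneobata1}, and then verify the ``$d\eta$-skew-symmetry'' of the endomorphism $F$. The remaining parts — the three $\nabla^{c}\phi_{\alpha}$ identities, the vanishing of $\nabla^{c}\eta$, and the vanishing of $\nabla^{c}d\eta$ along $\mathcal D$ — are immediate consequences of Proposition \ref{connessioni2} and the formulas already recorded in this section.
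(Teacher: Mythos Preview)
Your proposal is correct and follows essentially the same route as the paper: the identities \eqref{condizioniobata01}--\eqref{condizioniobata03} come directly from Theorem \ref{connessioneobata}(ii) together with \eqref{operatore}--\eqref{operatore3}; the vanishing of $\nabla^{c}d\eta$ along $\mathcal D$ is reduced via Propositions \ref{connessioni1}--\ref{connessioni2} to $\nabla^{bl}d\eta=0$; and the $\xi$-direction is handled through the torsion formula \eqref{torsioneobata1}, ${\mathcal L}_{\xi}d\eta=0$, and the symmetry of $h$. The only cosmetic difference is that you spell out the $\nabla^{c}\eta=0$ step and the cancellation $d\eta(\phi X,Y)+d\eta(X,\phi Y)=0$ more explicitly than the paper does.
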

\begin{proof}
By Proposition \ref{connessioni1} and Proposition \ref{connessioni2}
we have, for all $X,Y,Z\in\Gamma({\mathcal D})$,
$(\nabla^{c}_{X}d\eta)(Y,Z)=(\nabla^{2}_{X}d\eta)(Y,Z)=(\nabla^{bl}_{X}d\eta)(Y,Z)=0$
and, since $\nabla^{c}\xi=0$, $(\nabla^{c}_{X}d\eta)(Y,\xi)=0$.
Moreover, from \eqref{torsioneobata1} it follows that
\begin{equation}\label{formulaobata1}
\nabla_{\xi}^{c}X=[\xi,X]-\frac{2}{3}\left(\left(1-\frac{\mu}{2}\right)\phi
X+\phi h X\right).
\end{equation}
Then \eqref{formulaobata1} yields
\begin{align*}
(\nabla^{c}_{\xi}d\eta)(X,Y)&=\xi(d\eta(X,Y))-d\eta([\xi,X],Y)+\frac{2}{3}\left(1-\frac{\mu}{2}\right)d\eta(\phi
X,Y)+\frac{2}{3}d\eta(\phi h X,Y)\\
&\quad-d\eta(X,[\xi,Y])+\frac{2}{3}\left(1-\frac{\mu}{2}\right)d\eta(X,\phi
Y)+\frac{2}{3}d\eta(X,\phi h Y)\\
&=({\mathcal L}_{\xi}d\eta)(X,Y)+\frac{2}{3}g(\phi h X,\phi
Y)+\frac{2}{3}g(X,\phi^{2}h Y)=0,
\end{align*}
since ${\mathcal L}_{\xi}d\eta=0$ and $h$ is a symmetric operator.
Finally, \eqref{condizioniobata01}--\eqref{condizioniobata03} follow
from (ii) of Theorem \ref{connessioneobata} and from
\eqref{operatore}, \eqref{operatore2}.
\end{proof}


Conversely, we show that
\eqref{condizioniobata01}--\eqref{condizioniobata03} in some sense
characterize the existence of a  contact metric
$(\kappa,\mu)$-structure on an almost bi-paracontact manifold.

\begin{theorem}\label{main3}
Let $(\phi_{1},\phi_{2},\phi_{3})$ be an integrable almost
bi-paracontact structure on the contact manifold $(M,\eta)$ such
that the associated canonical connection satisfies
$\nabla^{c}d\eta=0$ and
\begin{align}
\nabla^{c}\phi_{1}&=\quad\quad -a\eta\otimes\phi_{2}\label{condizioniobata1}\\
\nabla^{c}\phi_{2}&=a\eta\otimes\phi_{1} \quad\quad +b\eta\otimes\phi_{3}\label{condizioniobata2}\\
\nabla^{c}\phi_{3}&=\quad\quad\quad
b\eta\otimes\phi_{2}\label{condizioniobata3}
\end{align}
for some $a>0$ (respectively, $a<0$) and $b>0$. Let us define
\begin{equation}\label{definizionemetrica}
g_{1}:=d\eta(\cdot,\phi_{1}\cdot)+\eta\otimes\eta, \ \
g_{2}:=d\eta(\cdot,\phi_{2}\cdot)+\eta\otimes\eta, \ \
g_{3}:=-d\eta(\cdot,\phi_{3}\cdot)+\eta\otimes\eta
\end{equation}
and assume that the symmetric bilinear form
$\pi_{1}:=g_{1}(h_{1}\cdot,\cdot)$ is positive definite
(respectively, negative definite). Then, for each
$\alpha\in\left\{1,2\right\}$, $(\phi_{\alpha},\xi,\eta,g_{\alpha})$
is a paracontact metric $(\kappa_{\alpha},\mu_{\alpha})$-structure
and $(\phi_{3},\xi,\eta,g_{3})$ is a contact metric
$(\kappa_{3},\mu_{3})$-structure, where
\begin{gather}
\kappa_{1}:=\frac{9}{4}a^{2}-1, \ \ \
\mu_{1}:=2-3b, \label{valori1} \\
\kappa_{2}:=\frac{9}{4}\left(a^{2}-b^{2}\right)-1, \ \ \
\mu_{2}:=2,\label{valori2} \\
\kappa_{3}:=1-\frac{9}{4}b^{2}, \ \ \ \mu_{3}:=2+3a. \label{valori3}
\end{gather}
Moreover, $(\phi_1,\phi_2,\phi_3)$ is the standard almost
bi-paracontact structure of the contact metric
$(\kappa_{3},\mu_{3})$-manifold $(M,\phi_{3},\xi,\eta,g_{3})$.
\end{theorem}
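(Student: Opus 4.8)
The plan is to reconstruct the contact metric $(\kappa_{3},\mu_{3})$-structure $(\phi_{3},\xi,\eta,g_{3})$ first and then to derive the remaining assertions from Theorem~\ref{main2} and Theorem~\ref{indotte}. By (ii) of Theorem~\ref{connessioneobata} the hypotheses \eqref{condizioniobata1}--\eqref{condizioniobata3} are equivalent to the identities $h_{1}=-\frac{3a}{2}\phi_{2}$, $h_{2}=\frac{3a}{2}\phi_{1}+\frac{3b}{2}\phi_{3}$, $h_{3}=\frac{3b}{2}\phi_{2}$ for the operators $h_{\alpha}=\frac{1}{2}\mathcal{L}_{\xi}\phi_{\alpha}$; in particular $h_{3}=\lambda\phi_{2}$ with $\lambda:=\frac{3}{2}b>0$, so $h_{3}^{2}=\lambda^{2}(I-\eta\otimes\xi)=(\kappa_{3}-1)\phi_{3}^{2}$ with $\kappa_{3}=1-\lambda^{2}$ as in \eqref{valori3}. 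Since the almost bi-paracontact structure is integrable, hence Legendrian, \eqref{legendrian} holds; applying it with $\phi_{3}$ gives $d\eta(\phi_{3}X,Y)=-d\eta(X,\phi_{3}Y)$, and from this a short computation yields both the compatibility $g_{3}(\phi_{3}X,\phi_{3}Y)=g_{3}(X,Y)-\eta(X)\eta(Y)$ and the identity $g_{3}(\cdot,\phi_{3}\cdot)=d\eta$; moreover $g_{3}(X,\xi)=\eta(X)$, so $\xi$ is $g_{3}$-orthogonal to $\mathcal{D}$ and $g_{3}(\xi,\xi)=1$. To obtain positive-definiteness on $\mathcal{D}$, use $h_{1}=-\frac{3a}{2}\phi_{2}$, $\phi_{1}=\phi_{3}\phi_{2}$ and \eqref{legendrian} to get $\pi_{1}=g_{1}(h_{1}\cdot,\cdot)=\frac{3a}{2}\,g_{3}$ on $\mathcal{D}$; the sign assumptions on $a$ and on $\pi_{1}$ then make $g_{3}|_{\mathcal{D}}$ positive definite, so $g_{3}$ is a Riemannian metric and $(\phi_{3},\xi,\eta,g_{3})$ is a contact metric structure.

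The central step is the $(\kappa_{3},\mu_{3})$-nullity condition for $(\phi_{3},\xi,\eta,g_{3})$. Here I will exploit that, since $\nabla^{c}\xi=0$, one has $R^{c}(X,Y)\xi=0$ for all $X,Y$, and compute $R^{g_{3}}(X,Y)\xi$ through the difference tensor $A:=\nabla^{g_{3}}-\nabla^{c}$ by means of the identity $R^{g_{3}}(X,Y)Z=R^{c}(X,Y)Z+(\nabla^{c}_{X}A)(Y,Z)-(\nabla^{c}_{Y}A)(X,Z)+A(T^{c}(X,Y),Z)+A(X,A(Y,Z))-A(Y,A(X,Z))$ with $Z=\xi$. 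The needed data are: $\nabla^{g_{3}}\xi=-\phi_{3}-\phi_{3}h_{3}=-\phi_{3}-\lambda\phi_{1}$ by \eqref{acca2}, hence $A(\cdot,\xi)=-\phi_{3}(\cdot)-\lambda\phi_{1}(\cdot)$; the torsion $T^{c}(\cdot,\xi)=a\phi_{3}(\cdot)+b\phi_{1}(\cdot)$, obtained from \eqref{torsioneobata2} together with the identities $N^{(1)}_{\phi_{\alpha}}(\xi,\cdot)=N^{(3)}_{\phi_{\alpha}}(\phi_{\alpha}\cdot)$ for $\alpha\in\{1,2\}$ and $N^{(1)}_{\phi_{3}}(\xi,\cdot)=-2\phi_{3}h_{3}(\cdot)$ (this last one from \eqref{lemma2}); and the restriction of $A$ to $\mathcal{D}\times\mathcal{D}$, which I claim is $\xi$-valued. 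For the last point, $\nabla^{c}$ preserves $\mathcal{D}$, and from $\nabla^{c}d\eta=0$, $\nabla^{c}\eta=0$ and $\nabla^{c}\phi_{3}=\frac{2}{3}\eta\otimes h_{3}$ one finds $(\nabla^{c}_{X}g_{3})(Y,Z)=-b\,\eta(X)\,d\eta(Y,\phi_{2}Z)$, which vanishes for $X\in\Gamma(\mathcal{D})$; hence on $\mathcal{D}\times\mathcal{D}$ the $\mathcal{D}$-component of $A$ is at once symmetric (the two connections have the same symmetrised torsion there, because $T^{c}=2d\eta\otimes\xi$ on $\mathcal{D}$ by \eqref{torsioneobata2} and Corollary~\ref{proprieta7}) and $g_{3}$-skew, so it vanishes, and $A(X,Y)=g_{3}(\nabla^{g_{3}}_{X}Y,\xi)\,\xi$ can then be written out explicitly in terms of $g_{1}$, $g_{2}$ and $d\eta$. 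Substituting all of this into the curvature identity and simplifying with $\lambda=\frac{3}{2}b$ yields $R^{g_{3}}(X,Y)\xi=\kappa_{3}\bigl(\eta(Y)X-\eta(X)Y\bigr)+\mu_{3}\bigl(\eta(Y)h_{3}X-\eta(X)h_{3}Y\bigr)$ with the values \eqref{valori3}. This is the hard part of the proof and the only place where a genuinely long computation is unavoidable.

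Finally, with $(\phi_{3},\xi,\eta,g_{3})$ established as a contact metric $(\kappa_{3},\mu_{3})$-structure, which is non-Sasakian because $b>0$, the remaining claims follow by pure assembly. Since $\lambda=\sqrt{1-\kappa_{3}}$, $h_{3}=\lambda\phi_{2}$ and $\phi_{3}h_{3}=\lambda\phi_{3}\phi_{2}=\lambda\phi_{1}$, Theorem~\ref{main2} shows that the standard almost bi-paracontact structure of $(M,\phi_{3},\xi,\eta,g_{3})$ is $\bigl(\frac{1}{\lambda}\phi_{3}h_{3},\frac{1}{\lambda}h_{3},\phi_{3}\bigr)=(\phi_{1},\phi_{2},\phi_{3})$, i.e.\ precisely the given one. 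In particular the metrics $g_{1}$, $g_{2}$ of \eqref{definizionemetrica} are the semi-Riemannian metrics \eqref{semiriemannian} attached to this standard structure, so Theorem~\ref{indotte} applies and yields that $(\phi_{\alpha},\xi,\eta,g_{\alpha})$, $\alpha\in\{1,2\}$, is a paracontact metric $(\kappa_{\alpha},\mu_{\alpha})$-structure; inserting $\kappa=\kappa_{3}$ and $\mu=\mu_{3}$ into \eqref{valori01}--\eqref{valori02} reproduces exactly \eqref{valori1}--\eqref{valori2}. The main obstacle throughout is the curvature computation of the middle paragraph; everything else is bookkeeping with structural results already proved.
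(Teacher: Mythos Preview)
Your argument is correct, but the central step differs from the paper's. You both arrive at the same structural facts---the expressions for $h_{\alpha}$, the Riemannian character of $g_{3}$ via $\pi_{1}=\frac{3a}{2}g_{3}$, and the fact that the difference tensor $A=\nabla^{g_{3}}-\nabla^{c}$ is $\xi$-valued on $\mathcal{D}\times\mathcal{D}$ (the paper packages this as \eqref{relazioneconnessioni}). From there the routes diverge. The paper does \emph{not} compute $R^{g_{3}}(X,Y)\xi$ directly; instead it uses \eqref{relazioneconnessioni} together with \eqref{condizioniobata2} to show that $g_{3}\bigl((\nabla^{g_{3}}_{X}h_{3})Y,Z\bigr)=0$ for all $X,Y,Z\in\Gamma(\mathcal{D})$, i.e.\ that $h_{3}$ is $\eta$-parallel, and then invokes the theorem of Boeckx--Cho \cite[Theorem~4]{boeckx2} which asserts that any contact metric manifold with $\eta$-parallel $h$ is a $(\kappa,\mu)$-space. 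The values of $\kappa_{3},\mu_{3}$ are then read off by matching \eqref{condizioniobata1}--\eqref{condizioniobata3} against \eqref{condizioniobata01}--\eqref{condizioniobata03}. Your direct curvature computation through the difference-tensor identity is a legitimate and self-contained alternative: it avoids the external citation at the price of the ``genuinely long computation'' you acknowledge, and it also gives $R^{g_{3}}(X,Y)\xi=0$ for $X,Y\in\Gamma(\mathcal{D})$ cleanly (the relevant terms are $\xi$-valued while the curvature is $g_{3}$-orthogonal to $\xi$). The final paragraph, where you identify the given structure with the standard one and apply Theorem~\ref{indotte}, coincides with the paper's conclusion.
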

\begin{proof}
Since the almost bi-paracontact structure is assumed to be
integrable, we have in particular, by Proposition \ref{proprieta4},
that the bilinear forms $g_{1}$, $g_{2}$, $g_{3}$, defined by
\eqref{definizionemetrica}, are symmetric, so that the definition is
well posed. Notice that, by construction, for each
$\alpha\in\left\{1,2,3\right\}$, $g_{\alpha}$ is compatible with the
corresponding structure, i.e.
\begin{equation*}
g_{\alpha}(\phi_{\alpha}X,\phi_{\alpha}Y)=-\epsilon\left(
g_{\alpha}(X,Y)-\eta(X)\eta(Y)\right)
\end{equation*}
where we have posed $\epsilon=1$ if $\alpha\in\left\{1,2\right\}$
and $\epsilon=-1$ if $\alpha=3$. Moreover, each $g_{\alpha}$ is, by
definition, an associated metric, i.e.
$d\eta(X,Y)=g_{\alpha}(X,\phi_{\alpha}Y)$ for all
$X,Y\in\Gamma(TM)$. Furthermore, by comparing
\eqref{condizioniobata1}--\eqref{condizioniobata3} with (ii) of
Theorem \ref{connessioneobata} we have that
\begin{equation}\label{confronto}
h_{1}=-\frac{3}{2}a\phi_{2}, \ \ \
h_{2}=\frac{3}{2}\left(a\phi_{1}+b\phi_{3}\right), \ \ \
h_{3}=\frac{3}{2}b\phi_{2}.
\end{equation}
Hence, by \eqref{confronto}, we have, for all $X,Y\in\Gamma(TM)$,
\begin{align*}
g_{3}(X,Y)&=-d\eta(X,\phi_{3}Y)+\eta(X)\eta(Y)\\
&=-g_{1}(X,\phi_{1}\phi_{3}Y)+\eta(X)\eta(Y)\\
&=-g_{1}(X,\phi_{2}Y)+\eta(X)\eta(Y)\\
&=\frac{2}{3a} g_{1}(X,h_{1}Y)\\
&=\frac{2}{3a}\pi_{1}(X,Y).
\end{align*}
Then the assumptions of positive definiteness of $\pi_1$ and $a>0$
imply that $g_{3}$ is a Riemannian metric. It follows that
$(\phi_{\alpha},\xi,\eta,g_{\alpha})$ is a paracontact metric
structure for $\alpha\in\left\{1,2\right\}$ and a contact metric
structure for $\alpha=3$. Now, since the almost bi-paracontact
structure $(\phi_1,\phi_2,\phi_3)$ is integrable, by Corollary
\ref{proprieta7}, the tensor fields $N^{(1)}_{\phi_1}$,
$N^{(1)}_{\phi_2}$, $N^{(1)}_{\phi_3}$ vanish on $\mathcal D$.
Moreover, Proposition \ref{proprieta4} implies that
$d(\phi_{1}X,\phi_{1}Y)=d(\phi_{2}X,\phi_{2}Y)=-d\eta(\phi_{3}X,\phi_{3}Y)=-d\eta(X,Y)$
for any $X,Y\in\Gamma({\mathcal D})$. Hence, taking (iii) of Theorem
\ref{connessioneobata} into account, the torsion of the canonical
connection is given by
\begin{equation}\label{torsione4}
T^{c}(X,Y)=2d\eta(X,Y)\xi
\end{equation}
for all $X,Y\in\Gamma({\mathcal D})$. We now are able to prove that
on the contact distribution the canonical connection and the Levi
Civita connection of $g_{3}$ are related by the formula
\begin{equation}\label{relazioneconnessioni}
\nabla^{c}_{X}Y=\nabla^{g_{3}}_{X}Y-\eta(\nabla^{g_{3}}_{X}Y)\xi.
\end{equation}
Indeed, let us define a linear connection $\nabla'$ on $M$ by
\begin{equation*}
\nabla'_{X}Y:=\left\{
                \begin{array}{ll}
                  \nabla^{c}_{X}Y+\eta(\nabla^{g_{3}}_{X}Y)\xi, & \hbox{if $X,Y\in\Gamma({\mathcal D})$;} \\
                  \nabla^{g_3}_{X}Y, & \hbox{elsewhere.}
                \end{array}
              \right.
\end{equation*}
We prove that in fact $\nabla'$ coincides with the Levi Civita
connection of $(M,g_3)$. For any $X,Y,Z\in\Gamma({\mathcal D})$ we
have
\begin{align*}
(\nabla'_{X}g_{3})(Y,Z)&=(\nabla^{c}_{X}g_{3})(Y,Z)-\eta(Z)\eta(\nabla^{g_{3}}_{X}Y)-\eta(Y)\eta(\nabla^{g_3}_{X}Z)\\
&=-X(d\eta(Y,\phi_{3}Z))+d\eta(\nabla^{c}_{X}Y,\phi_{3}Z)+d\eta(Y,\phi_{3}\nabla^{c}_{X}Z)\\
&=-X(d\eta(Y,\phi_{3}Z))+d\eta(\nabla^{c}_{X}Y,\phi_{3}Z)+d\eta(Y,\nabla^{c}_{X}\phi_{3}Z)\\
&=-(\nabla^{c}_{X}d\eta)(Y,\phi_{3}Z)=0,
\end{align*}
\begin{equation*}
(\nabla'_{X}g_{3})(Y,\xi)=(\nabla^{g_{3}}_{X}g_{3})(Y,\xi)-\eta(\nabla^{c}_{X}Y)=0
\end{equation*}
and
\begin{equation*}
(\nabla'_{\xi}g_{3})(Y,Z)=(\nabla^{g_{3}}_{\xi}g_{3})(Y,Z)=0.
\end{equation*}
Next, by \eqref{torsione4}
\begin{equation*}
T'(X,Y)=T^{c}(X,Y)+\eta(\nabla^{g_3}_{X}Y)\xi-\eta(\nabla^{g_3}_{Y}X)\xi=2d\eta(X,Y)\xi+\eta([X,Y])\xi=0,
\end{equation*}
and $T'(X,\xi)=T^{g_3}(X,\xi)=0$. Thus $\nabla'=\nabla^{g_{3}}$ and
\eqref{relazioneconnessioni} follows. Then \eqref{condizioniobata2},
\eqref{confronto} and \eqref{relazioneconnessioni}  yield, for any
$X,Y,Z\in\Gamma({\mathcal D})$,
\begin{align*}
g_{3}((\nabla^{g_{3}}_{X}h_{3})Y,Z)&=g_{3}((\nabla^{c}_{X}h_{3})Y,Z)+\eta(\nabla^{g_{3}}_{X}h_{3}Y)\eta(Z)\\
&=\frac{3}{2}b g_{3}((\nabla^{c}_{X}\phi_{2})Y,Z)\\
&=\frac{3}{2}ab\eta(X)g_{3}(\phi_{1}X,Z)+\frac{3}{2}b^2\eta(X)g_{3}(\phi_{3}X,Z)=0.
\end{align*}
Therefore the tensor field $h_{3}$ is ``$\eta$-parallel'' (cf.
\cite{boeckx2}) and so, by \cite[Theorem 4]{boeckx2},
$(\phi_{3},\xi,\eta,g_{3})$ is a contact metric
$\left(\kappa,\mu\right)$-space. The values of $\kappa$ and $\mu$
can be found by comparing
\eqref{condizioniobata1}--\eqref{condizioniobata3} with
\eqref{condizioniobata01}--\eqref{condizioniobata03}. After a
straightforward computation it turns out that they are given by
\eqref{valori3}. The remaining part of the theorem follows from
Theorem \ref{indotte}. In particular, \eqref{valori1} and
\eqref{valori2} are consequence of \eqref{valori01} and
\eqref{valori02}, respectively. \ The case $a<0$ can be proved in a
similar way.
\end{proof}

Formulae \eqref{operatore}--\eqref{operatore3} together with (a) of
Lemma \ref{lemma1} allow us to define a supplementary almost
bi-paracontact structure on a non-Sasakian contact metric
$(\kappa,\mu)$-space. In fact, by \eqref{operatore2} we have
\begin{align}
h_{2}^{2}&=\left(1-\frac{\mu}{2}\right)^{2}\phi_{1}^{2}+\left(1-\frac{\mu}{2}\right)\sqrt{1-\kappa}\phi_{1}\phi_{3}+\left(1-\frac{\mu}{2}\right)\sqrt{1-\kappa}\phi_{3}\phi_{1}+(1-\kappa)\phi_{3}^{2}\nonumber\\
&=\left(\left(1-\frac{\mu}{2}\right)^{2}-\left(1-\kappa\right)\right)\left(I-\eta\otimes\xi\right).\label{operatorenuovo}
\end{align}
Therefore, under the assumption that
$\left(1-\frac{\mu}{2}\right)^2\neq 1-\kappa$, we are led to
consider the tensor field
\begin{align}
\label{operatorenuovobis}\psi:=&\frac{1}{\sqrt{\left|\left(1-\frac{\mu}{2}\right)^2-\left(1-\kappa\right)\right|}}h_{2}=\frac{1}{\sqrt{\left|\left(1-\frac{\mu}{2}\right)^2-\left(1-\kappa\right)\right|}}\left(\left(1-\frac{\mu}{2}\right)\phi_{1}+\sqrt{1-\kappa}\phi_{3}\right)
\end{align}
By \eqref{operatorenuovo} we see that if
$\left(1-\frac{\mu}{2}\right)^2-\left(1-\kappa\right)>0$ then the
tensor field $\psi$ satisfies $\psi^{2}=I-\eta\otimes\xi$, whereas
if $\left(1-\frac{\mu}{2}\right)^2-\left(1-\kappa\right)<0$ we have
$\psi^{2}=-I+\eta\otimes\xi$. Notice that
$\left(1-\frac{\mu}{2}\right)^2-\left(1-\kappa\right)>0$ if and only
if  $|I_M|>0$. Therefore we are able to prove the following theorem.

\begin{theorem}\label{main4}
Let $(M,\phi,\xi,\eta,g)$ be a non-Sasakian contact metric
$(\kappa,\mu)$-space such that $I_{M}\neq\pm 1$.
\begin{enumerate}
  \item[(i)] If $|I_{M}|>1$ then $M$ admits an integrable almost
bi-paracontact structure $(\phi'_{1},\phi'_{2},\phi'_{3})$, given by
\begin{align*}
\phi'_{1}:=&\frac{1}{\sqrt{\left(1-\frac{\mu}{2}\right)^2-\left(1-\kappa\right)}}\left(I_{M}\phi
h+\sqrt{1-\kappa}\phi\right)\\
\phi'_{2}:=&\frac{1}{\sqrt{1-\kappa}}h\\
\phi'_{3}:=&\frac{1}{\sqrt{\left(1-\frac{\mu}{2}\right)^2-\left(1-\kappa\right)}}\left(I_{M}h+\sqrt{1-\kappa}\phi
h\right).
\end{align*}
  \item[(ii)]If $|I_{M}|<1$ then $M$ admits an integrable almost bi-paracontact structure $(\phi''_{1},\phi''_{2},\phi''_{3})$, given by
\begin{align*}
\phi''_{1}:=&\frac{1}{\sqrt{1-\kappa}}h\\
\phi''_{2}:=&\frac{1}{\sqrt{1-\kappa-\left(1-\frac{\mu}{2}\right)^2}}\left(I_{M}h+\sqrt{1-\kappa}\phi
h\right)\\
\phi''_{3}:=&\frac{1}{\sqrt{1-\kappa-\left(1-\frac{\mu}{2}\right)^2}}\left(I_{M}\phi
h+\sqrt{1-\kappa}\phi\right).
\end{align*}
\end{enumerate}
\end{theorem}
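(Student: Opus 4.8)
The plan is to produce both new structures from the tensor field $\psi$ of \eqref{operatorenuovobis}, reading everything off the identity $h_2^2=\bigl((1-\frac{\mu}{2})^2-(1-\kappa)\bigr)(I-\eta\otimes\xi)$ in \eqref{operatorenuovo} together with the expression $h_2=(1-\frac{\mu}{2})\phi_1+\sqrt{1-\kappa}\,\phi_3$ in \eqref{operatore2}. Since $|I_M|>1$ (resp.\ $|I_M|<1$) is exactly $(1-\frac{\mu}{2})^2-(1-\kappa)>0$ (resp.\ $<0$) -- and this is nonzero precisely because of the hypothesis $I_M\neq\pm1$, so the normalization defining $\psi$ is legitimate -- the tensor $\psi=h_2\big/\sqrt{\bigl|(1-\frac{\mu}{2})^2-(1-\kappa)\bigr|}$ satisfies $\psi^2=I-\eta\otimes\xi$ in case (i) and $\psi^2=-I+\eta\otimes\xi$ in case (ii). In both cases $\psi\xi=0$, $\eta\circ\psi=0$, and $\psi$ is skew with respect to $d\eta$, i.e.\ $d\eta(\psi\cdot,\cdot)=-d\eta(\cdot,\psi\cdot)$, because $\phi_1$ and $\phi_3$ both enjoy this property (the standard almost bi-paracontact structure of $M$ being Legendrian, cf.\ \eqref{legendrian} and Proposition \ref{proprieta4}); and from the relations $\phi_1\phi_2=-\phi_2\phi_1$, $\phi_2\phi_3=-\phi_3\phi_2$ one reads off that $\psi$ anti-commutes with $\phi_2=\frac{1}{\sqrt{1-\kappa}}h$.

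In case (i) I would set $\phi'_1:=\psi$, $\phi'_2:=\phi_2$, $\phi'_3:=\phi'_1\phi'_2$, and in case (ii) $\phi''_1:=\phi_2$, $\phi''_3:=\psi$, $\phi''_2:=\phi''_1\phi''_3$ (on $\mathcal{D}$ one has $\phi_2^{-1}=\phi_2$). That each triple satisfies the axioms of an almost bi-paracontact structure is then routine algebra from $\psi^2=\pm(I-\eta\otimes\xi)$, $\phi_2^2=I-\eta\otimes\xi$ and the anti-commutativity: for instance, in case (i), $(\phi'_3)^2=\psi\phi_2\psi\phi_2=-\psi^2\phi_2^2=-(I-\eta\otimes\xi)$, while $\phi'_3\xi=0$, $\eta\circ\phi'_3=0$ and $\phi'_2\phi'_1=-\phi'_1\phi'_2=-\phi'_3$, so $\phi'_3$ is an almost contact structure compatible with $\eta$; case (ii) is symmetric. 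Rewriting the three tensors in terms of $\phi$, $h$, $\phi h$ by means of $\phi_1=\frac{1}{\sqrt{1-\kappa}}\phi h$, $\phi_2=\frac{1}{\sqrt{1-\kappa}}h$, $\phi_3=\phi$ and $1-\frac{\mu}{2}=I_M\sqrt{1-\kappa}$ yields the explicit formulae recorded in the statement.

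It then remains to check integrability, i.e.\ that all four eigendistributions of the pair $(\phi'_1,\phi'_2)$ (resp.\ $(\phi''_1,\phi''_2)$) are Legendre foliations. For the factor $\phi_2$ the eigendistributions are $\mathcal{D}_h(\pm\lambda)$, which constitute the canonical bi-Legendrian structure of $M$ and so are Legendre foliations; hence the only issue is the eigendistributions $E^{\pm}$ of $\psi$ in case (i) (resp.\ of $\phi_2\psi$ in case (ii)). These are $n$-dimensional and, since $\psi$ (resp.\ $\phi_2\psi$) is $d\eta$-skew, $d\eta$ vanishes on each of them, so they are Legendre distributions. For involutivity I would use the bi-Legendrian connection $\nabla^{bl}=\nabla^2$ of $(\mathcal{D}_h(\lambda),\mathcal{D}_h(-\lambda))$: by Proposition \ref{connessioni1} and \eqref{relazione2} it parallelizes $\phi_1,\phi_2,\phi_3$, hence the constant-coefficient tensors $\psi$ and $\phi_2\psi$, so it preserves $E^{\pm}$; moreover, as $\mathcal{D}_h(\pm\lambda)$ are already foliations, the torsion \eqref{torsione} of $\nabla^{bl}$ restricted to $\mathcal{D}$ reduces to $T^{bl}(X,Y)=2d\eta(X,Y)\xi$. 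Therefore, for $X,Y\in\Gamma(E^{+})$ one gets $[X,Y]=\nabla^{bl}_X Y-\nabla^{bl}_Y X-2d\eta(X,Y)\xi=\nabla^{bl}_X Y-\nabla^{bl}_Y X\in\Gamma(E^{+})$, and likewise for $E^{-}$; thus $E^{\pm}$ are Legendre foliations and the new almost bi-paracontact structure is integrable.

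I expect this last point -- packaging the connection-theoretic argument for the involutivity of the eigendistributions of $\psi$ -- to be the only step requiring genuine thought, and hence the main obstacle; everything else is bookkeeping with the tensor identities \eqref{operatore}--\eqref{operatore3}, \eqref{operatorenuovo}, \eqref{legendrian} and with the algebraic relations among $\phi_1,\phi_2,\phi_3$, together with the elementary observation that $|I_M|\gtrless1$ controls the sign of $\psi^2$ and therefore which of the three tensors plays the role of the almost contact structure in the new triple.
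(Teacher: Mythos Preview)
Your proof is correct and follows essentially the same route as the paper: construct the new triple from $\psi=h_2/\sqrt{|\alpha^2-\beta^2|}$ together with $\phi_2$, verify the bi-paracontact axioms from \eqref{operatorenuovo} and the anti-commutation $\phi_2 h_2=-h_2\phi_2$, and then prove integrability by exploiting a connection that parallelizes $\phi_1,\phi_2,\phi_3$ on $\mathcal D$ and has torsion $2d\eta\otimes\xi$ there. The only cosmetic difference is that the paper uses $\nabla^{c}$ (via \eqref{torsione6} and \eqref{condizioniobata01}--\eqref{condizioniobata03}) whereas you use $\nabla^{bl}=\nabla^{2}$ (via \eqref{relazione2} and \eqref{torsione}); by Proposition~\ref{connessioni2} these agree on $\mathcal D$, so the arguments are equivalent.
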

\begin{proof}
Let us assume $|I_{M}|>1$. In order to relieve the notation, we put
$\alpha:=1-\frac{\mu}{2}$ and $\beta:=\sqrt{1-\kappa}$.   As
remarked before, by a direct computation one proves that
${\phi'}_{1}^{2}=I-\eta\otimes\xi$. Moreover, by (a) of  Lemma
\ref{lemma1}, $\phi_{2}h_{2}=-h_{2}\phi_{2}$, so that
${\phi'}_{1}=\frac{1}{\sqrt{\alpha^{2}-\beta^{2}}}h_2$ and
${\phi'}_{2}=\phi_2$ anti-commute. Thus
$({\phi'}_{1},{\phi'}_{2},{\phi'}_{3}={\phi'}_{1}{\phi'}_{2})$ is an
almost bi-paracontact structure on $(M,\eta)$. We prove that it is
integrable, by showing that the eigendistributions ${{\mathcal
D}'}_{1}^{\pm}$ associated to ${\phi'}_1$ define Legendre
foliations, since we already know that ${{\mathcal
D}'}_{2}^{\pm}={{\mathcal D}}_{2}^{\pm}$ do. First we show that
${{\mathcal D}'}_{1}^{+}$ is a Legendrian distribution. For any
$X,X'\in\Gamma({{\mathcal D}'}_{1}^{+})$ we have
\begin{align}
\nonumber d\eta(X,X')&=d\eta({\phi'}_{1}X,{\phi'}_{1}X')\\
&=\frac{1}{\alpha^{2}-\beta^{2}}\bigl(\alpha^{2}d\eta(\phi_{1}X,\phi_{1}X')+\alpha\beta
d\eta(\phi_{1}X,\phi_{3}X')+\alpha\beta
d\eta(\phi_{3}X,\phi_{1}X') \label{intermedio4}\\
\nonumber &\quad+\beta^{2}d\eta(\phi_{3}X,\phi_{3}X')\bigr).
\end{align}
Now, notice that \
$d\eta(\phi_{1}X,\phi_{1}X')=-d\eta(\phi_{3}X,\phi_{3}X')=-d\eta(X,X')$,
 and
$d\eta(\phi_{1}X,\phi_{3}X')=d\eta(\phi_{1}X,\phi_{1}\phi_{2}X')=-d\eta(X,\phi_{2}X')=-d\eta(\phi_{3}X,\phi_{1}X')$,
so that \eqref{intermedio4} becomes
\begin{equation*}
d\eta(X,X')=-\frac{\alpha^{2}-\beta^{2}}{\sqrt{\alpha^{2}-\beta^{2}}}d\eta(X,X')=-\sqrt{\alpha^{2}-\beta^{2}}d\eta(X,X').
\end{equation*}
Hence $d\eta(X,X')=0$. It remains to prove that ${{\mathcal
D}'}_{1}^{+}$ is involutive. Take $X,X'\in\Gamma({{\mathcal
D}'}_{1}^{+})$. By \eqref{torsione6},  the torsion of the canonical
connection $\nabla^{c}$  of the contact metric $(\kappa,\mu)$-space
$(M,\phi,\xi,\eta,g)$ satisfies $T^{c}(X,X')=2d\eta(X,X')\xi=0$.
Then, using \eqref{condizioniobata01}--\eqref{condizioniobata03}, we
have
\begin{align*}
{\phi'}_{1}[X,X']&={\phi'}_{1}\left(\nabla^{c}_{X}X'-\nabla^{c}_{X'}X\right)\\
&=\frac{1}{\sqrt{\alpha^2-\beta^2}}\left(\alpha\phi_{1}\nabla^{c}_{X}X'+\beta\phi_{3}\nabla^{c}_{X}X'-\alpha\phi_{1}\nabla^{c}_{X'}X-\beta\phi_{3}\nabla^{c}_{X'}X\right)\\
&=\frac{1}{\sqrt{\alpha^2-\beta^2}}\left(\alpha\nabla^{c}_{X}\phi_{1}X'+\beta\nabla^{c}_{X}\phi_{3}X'-\alpha\nabla^{c}_{X'}\phi_{1}X-\beta\nabla^{c}_{X'}\phi_{3}X\right)\\
&=\nabla^{c}_{X}{\phi'}_{1}X'-\nabla^{c}_{X'}{\phi'}_{1}X\\
&=[X,X'].
\end{align*}
In the same way one can prove that also ${{\mathcal D}'}_{1}^{-}$ is
involutive. Thus we conclude that the almost bi-paracontact
structure $({\phi'}_{1},{\phi'}_{2},{\phi'}_{3})$ is integrable. \
The case $|I_M|<1$ can be proved in  a similar way.
\end{proof}

\begin{remark}
By a straightforward computation one obtains
\begin{equation*}
h'_{1}=-{\sqrt{{I_{M}}^2-1}}h, \ \ \ h'_{2}=I_{M}\phi
h+\sqrt{1-\kappa}\phi, \ \ \ h'_{3}=0,
\end{equation*}
\begin{equation*}
h''_{1}=I_{M}\phi h+\sqrt{1-\kappa}\phi, \ \ \ h''_{2}=0, \ \ \
h''_{3}=\sqrt{1-{I_{M}}^2}h.
\end{equation*}
Moreover, the integrability of the almost bi-paracontact structure
yields, by Corollary \ref{proprieta7}, $N^{(1)}_{\phi'_{3}}=0$
 on $\mathcal D$. On the other hand, for any
$X\in\Gamma({\mathcal D})$,
$N^{(1)}_{\phi'_{3}}(X,\xi)=-[X,\xi]-\phi'_{3}[\phi'_{3}X,\xi]=2\phi'_{3}h'_{3}=0$.
Hence the almost contact structure $(\phi'_{3},\xi,\eta)$ is normal.
Nevertheless the almost bi-paracontact itself is not normal because
$h'_{1}$ and $h'_{2}$ do not vanish. Similar arguments hold for
$(\phi''_{1},\phi''_{2},\phi''_{3})$. Thus we have obtained a class
of examples of integrable, non-normal almost bi-paracontact
structures such that one structure is normal.
\end{remark}

\small

\end{document}